\newcommand{\Id}{\operatorname{Id}}
\newcommand{\Cof}{\operatorname{Cof}}
\renewcommand{\leq}{\leqslant}
\renewcommand{\geq}{\geqslant}
\renewcommand{\Re}{\operatorname{Re}}
\numberwithin{equation}{section}
\newtheorem{Theorem}{Theorem}[section]
\newtheorem{Corollary}[Theorem]{Corollary}
\newtheorem{Proposition}[Theorem]{Proposition}
\newtheorem{Lemma}[Theorem]{Lemma}
\newtheorem{Remark}[Theorem]{Remark}
\title{Local boundary feedback stabilization of a fluid-structure interaction problem under Navier slip boundary conditions with time delay} 
\author[1]{Imene Aicha Djebour \thanks{ E-mail address: \href{mailto: imene.djebour@univ-lorraine.fr}{imene.djebour@univ-lorraine.fr} }}
\affil[1]{Universit\'e de Lorraine, CNRS, Inria, IECL, F-54000 Nancy, France}
\date{\today}
\pgfplotsset{width=7cm,compat=1.8}
\begin{document}
	\maketitle	
\begin{abstract}
We consider a fluid-structure interaction system composed by a three-dimensional viscous incompressible fluid 
and an elastic plate located on the upper part of the fluid boundary. 
The fluid motion is governed by the Navier-Stokes system whereas the structure displacement satisfies the damped plate equation. We consider here the Navier slip boundary conditions. 
The main result of this work is the feedback stabilization of the strong solutions of the corresponding system around a stationary state for any exponential decay rate by means of a time delayed control localized on the fixed fluid boundary. This work relies on the Fattorini-Hautus criterion. Then, the main tool in this work is to show the unique continuation property of the associate solution to the adjoint system. 
\end{abstract} 
	
\vspace{1cm}
	
\noindent {\bf Keywords:} Navier-Stokes system, damped beam equation, strong solutions, stabilization, time delay, Navier boundary conditions
	
\noindent {\bf 2010 Mathematics Subject Classification.} 35Q30, 76D05, 76D03, 74F10

\tableofcontents
	
\section{Introduction}
We suppose that the fluid flow occupies a 3D periodic domain and an elastic plate that is disposed on the upper part of the fluid  boundary. The force exerted by the fluid on the plate influences the deformation of the elastic structure and then the fluid domain depends on the plate displacement and hence on time. Consequently, the interaction between the deformable structure and the fluid is modeled by a strongly coupled non linear system set in a moving domain. Our aim is to stabilize the position and the velocity of the structure as well as the velocity of the fluid  around a stationary state using a time delayed control that acts on a local part of the boundary of the fluid domain provided that the initial data are close enough to the stationary state in some norm. First, we give some important notations in order to write the system.

Let $\omega$ be the rectangular torus
\begin{equation*}
\omega=(\mathbb{R}/L_1\mathbb{Z})\times (\mathbb{R}/L_2\mathbb{Z})\quad L_1>0,\;L_2>0.
\end{equation*}
For any function $\eta : \omega \to (-1,\infty)$, we define
\begin{align*}
\Omega(\eta)&=\left\{ (x_1,x_2,x_3)\in \omega\times\mathbb{R} \ ; \ 
0<x_3<1+\eta (x_1,x_2)\right\}, \\
\Gamma(\eta)&=\left\{ (x_1,x_2,x_3)\in \omega\times\mathbb{R} \ ; \ 
x_3=1+\eta (x_1,x_2)\right\},\\
\Gamma_0 &=\omega\times\{0\}.
\end{align*}
(see the figure \ref{fig}).
In particular, we have
\begin{equation*}
\partial \Omega(\eta)=\Gamma(\eta)\cup \Gamma_0. 
\end{equation*}
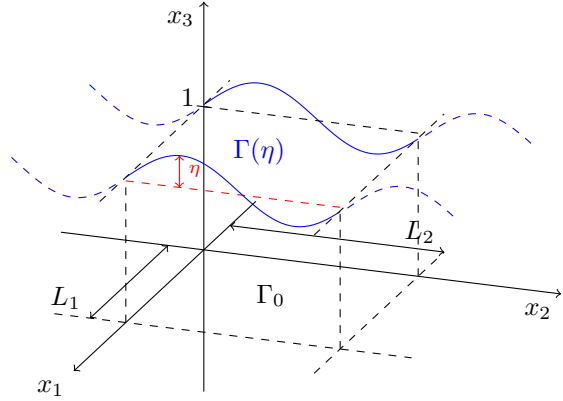
\begin{figure}[H]
	\tdplotsetmaincoords{70}{110}
	\centering
	\begin{tikzpicture}[tdplot_main_coords]
\draw[thin,->](-2,0,0)--(5,0,0)node[below left]{$x_1$};
\draw[thin,->](0,-2,0)--(0,5,0)node[below left]{$x_2$};
\draw[thin,->](0,0,-2)--(0,0,3.5)node[below left]{$x_3$};
\draw[dashed,-](3,-1,0)--(3,4,0);
\draw[dashed,-](4,3,0)--(-1,3,0);
\draw[dashed,-](3,3,0)--(3,3,2);
\draw[dashed,-](3,0,0)--(3,0,2);
\draw[scale=0.5,domain=0:6, smooth,variable=\x,blue] plot (0,\x,{0.8*sin(\x r)+4.1});
\begin{scope}[shift={(0,-3.1,0)}]
\draw[dashed,scale=0.5,domain=3:6, smooth,variable=\x,blue] plot (0,\x,{0.8*sin(\x r)+4.15});
\end{scope}
\begin{scope}[shift={(0,3.1,0)}]
\draw[dashed,scale=0.5,domain=0:3, smooth,variable=\x,blue] plot (0,\x,{0.8*sin(\x r)+4});
\end{scope}
\draw[dashed,-](0,3,0)--(0,3,2);
\draw[dashed,-](4,0,2.05)--(-1,0,2.05);
\draw[dashed,red,-](3,0,2)--(3,3,2);
\draw[thin,red,<->](3,0.75,2)--(3,0.75,2.45)node[below right]{\scriptsize{$\eta$}}; 
\draw[scale=0.5,domain=0:6, smooth,variable=\x,blue] plot (6,\x,{0.8*sin(\x r)+4.1});
\draw[dashed,-](4,3,1.95)--(-1,3,1.95);
\begin{scope}[shift={(0,-3.1,0)}]
\draw[dashed,scale=0.5,domain=3:6, smooth,variable=\x,blue] plot (6,\x,{0.8*sin(\x r)+4.15});
\end{scope}
\begin{scope}[shift={(0,3.1,0)}]
\draw[dashed,scale=0.5,domain=0:3, smooth,variable=\x,blue] plot (6,\x,{0.8*sin(\x r)+4});
\end{scope}
\draw[blue](2,2,2)node[above left]{$\Gamma(\eta)$};
\draw (2,2,0)node[above left]{$\Gamma_{0}$};
\draw[<->] (0,-0.5,0)--(3,-0.5,0)node[above left]{$L_1$};
\draw[<->] (-1,0,0)--(-1,3,0)node[above left]{$L_2$};
\draw (0,0,1.9)node[above left]{$1$};
\draw (0,-0.1,2.02)--(0,0.1,2.02);
\draw[dashed,-] (0,0,2.02)--(0,3,2.02);
\end{tikzpicture}
\caption{Configuration of the domain corresponding to the plate displacement $\eta$.}
\label{fig}
\end{figure}
We consider the following system describing the evolution of the fluid that is governed by the Navier-Stokes equations and the displacement of the elastic plate by the damped beam equation
\begin{equation}
\label{nav}
\left\{
\begin{array}{rl}
\partial_t U+(U\cdot \nabla)U-\nabla \cdot \mathbb{T}(U,P)=f^S   & \text{in}\ (0,\infty)\times \Omega(\eta),\\
\nabla \cdot U=0& \text{in} \ (0,\infty)\times \Omega(\eta),\\
\partial_{tt} \eta+\alpha\Delta^2\eta-\delta \Delta \partial_t \eta=\widetilde{\mathbb{H}}_\eta(U,P)+h^S & \text{in} \ (0,\infty)\times \omega.
\end{array}
\right.
\end{equation}
In \eqref{nav}, we have denoted by $U$ the fluid velocity, $P$ the fluid pressure and $\eta$ the transversal displacement of the elastic structure. The functions $f^S$ and $h^S$ are time independent data.

The Cauchy stress tensor $\mathbb{T}(U,P)$ is given by
$$ \mathbb{T}(U,P)=-P I_3 +2 \nu D(U),\quad D(U)_{i,j}=\frac{1}{2}\left( \frac{\partial  U_i}{\partial x_j}+\frac{\partial  U_j}{\partial x_i}\right).$$
The coefficients $\alpha$, $\delta$ and $\nu$ correspond respectively to the rigidity, the damping on the structure and the fluid viscosity.

We denote by $\widetilde{n}$ the unit exterior normal vector on $\partial \Omega(\eta)$ given as follows:
$$
\widetilde{n}=-e_3\;\text{on}\;\Gamma_{0},
$$
and on $\Gamma(\eta)$:
\begin{equation*}
\widetilde{n}(s,1+\eta(s))=\frac{\widetilde{N}(s,1+\eta(s))}{|\widetilde{N}(s,1+\eta(s))|}, \quad \text{where} \quad
\widetilde{N}(s,1+\eta(s))=
\begin{pmatrix}
-\partial_{s_1}\eta(s)\\-\partial_{s_2}\eta(s)  \\ 1
\end{pmatrix},
\quad s\in \omega.
\end{equation*} 
Here and in what follows, $|\cdot|$ stands for the Euclidean norm of $\mathbb{R}^k$, $k\geq 1$. Also, we select two tangent vectors $\widetilde{\tau}^i,\, i=1,2$ linearly independent on $\partial\Omega(\eta)$ such that
\begin{equation}
\label{tautild1}
\widetilde{\tau}^i=e_i,\quad i=1,2\text{  on }\Gamma_0,
\end{equation}
and 
\begin{equation}
\label{tautild2}
\widetilde{\tau}^1(s,1+\eta(s))=\begin{pmatrix}
1\\ 0 \\ \partial_{s_1}\eta(s)
\end{pmatrix},\quad \widetilde{\tau}^2(s,1+\eta(s))=\begin{pmatrix}
0\\ 1 \\ \partial_{s_2}\eta(s)
\end{pmatrix} \quad \text{ on $\Gamma(\eta)$}.
\end{equation}
The function $\widetilde{\mathbb{H}}_{\eta}$ is the contact force exerted by the fluid on the interface which is defined by
$$
\widetilde{\mathbb{H}}_\eta(U,P)=-\sqrt{1+|\nabla \eta|^2}\left( \mathbb{T}(U,P) \widetilde{n}\cdot e_3\right).
$$

We complete \eqref{nav} by the Navier slip boundary conditions. 
Let $a\in \mathbb{R}^3$, we denote by $a_{\widetilde{n}}$ and $a_{\widetilde{\tau}}$ the normal and the tangential components of $a$ :
\begin{equation}
\label{tak0.14}
a_{\widetilde{n}}= (a\cdot \widetilde{n})\widetilde{n}, \quad a_{\widetilde{\tau}} =a-a_{\widetilde{n}}=-\widetilde{n} \times \left( \widetilde{n} \times a\right).
\end{equation}

The Navier slip boundary conditions write as follows:
\begin{equation}
\label{navb}
\left\{
\begin{array}{rl}
U_{\widetilde{n}} = (\mathbb{M}v)_{\widetilde{n}}  & \text{on} \ (0,\infty)\times\Gamma_0,\\
\left[ 2\nu D(U) {\widetilde{n}}+\beta_{1} U\right]_{{\widetilde{\tau}}}=\beta_1(\mathbb{M}v)_{\widetilde{\tau}}& \text{on} \ (0,\infty)\times\Gamma_0,\\
(U-\partial_t \eta e_3)_{\widetilde{n}}=0&  \text{on} \ (0,\infty)\times\Gamma(\eta),\\
\left[ 2\nu D(U) {\widetilde{n}} + \beta_{2} \left(U-\partial_t \eta e_3\right)\right]_{{\widetilde{\tau}}}=0 &\text{on} \ (0,\infty)\times\Gamma(\eta).
\end{array}
\right.
\end{equation}
Here, $v$ is the control of the system \eqref{nav}, \eqref{navb} acting on the fixed boundary $\Gamma_0$. In order to preserve the compatibility condition due to the incompressibility of the fluid, we use the operator $\mathbb{M}$ defined by 
\begin{equation}
\label{defm}
\mathbb{M}v=mv-\left(\int_{\Gamma_0}m v\cdot {\widetilde{n}}\ d\Gamma\right)m{\widetilde{n}},
\end{equation}
where $m\in C^2(\Gamma_0)$ is compactly supported in $\Gamma_0$ which satisfies $\int_{\Gamma_0} m \ ds=1$, see for example \cite{MB1}, \cite{MR2247716}. Thus, the operator $\mathbb{M}$ localizes the action of the control in a relatively compact subset of $\Gamma_0$. We notice that $\mathbb{M}\in\mathcal{L}([L^2(\Gamma_0)]^3)$ and 
\begin{equation}
\label{com}
\int_{\Gamma_0}\mathbb{M}v \cdot\widetilde{n} \ d\Gamma=0.
\end{equation}

We assume that the friction coefficients $\beta_1$ and $\beta_2$ are two non negative constants 
$$
\beta_1\geq 0,\quad\beta_2\geq 0.
$$ 
Since $\omega$ is a rectangular torus, we complement the system  \eqref{nav}, \eqref{navb} with periodic boundary conditions for the fluid and for the elastic plate on the remaining boundaries of $\Omega(\eta)$. Then, we consider data and solutions which are periodic in the both directions $e_1$ and $e_2$, for example :
\begin{gather*}
U(t,x_1+L_1,x_2,x_3)=U(t,x_1,x_2,x_3), \quad U(t,x_1,x_2+L_2,x_3)=U(t,x_1,x_2,x_3), 
\\
\eta(t,s_1+L_1,s_2)=\eta(t,s_1,s_2), \quad \eta(t,s_1,s_2+L_2)=\eta(t,s_1,s_2).
\end{gather*}
Since $U$ is divergence free and taking into account the equation \eqref{com}, the velocity $\partial_t\eta$ should satisfy the condition
\begin{equation}
\label{com1}
\int_\omega\partial_t\eta \ ds =0.
\end{equation} 
Integrating the plate equation on $\omega$ and using \eqref{com1} and the fact that $\eta$ is periodic, we find
\begin{equation}
\label{com2}
\int_\omega \widetilde{\mathbb{H}}_\eta(U,P)+h^S \ ds=0.
\end{equation}
The above condition can be satisfied if we write 
\begin{equation}
\label{decom}
P=P_0+c,
\end{equation}
where $P_0$ satisfies
$$
\int_{\Omega(\eta)} P_0 \ dx =0,
$$
and $c$ is a constant that is chosen conveniently in such a way \eqref{com2} is verified.  
Consequently, the pressure $P$ will be uniquely defined.
In order to impose the condition \eqref{com2} without considering the normalizing constant $c$, we define the projection operator $M$ on $L^2_0(\omega)$ where
\begin{equation}
\label{l2}
L^2_0(\omega)=\left\lbrace \eta \in L^2(\omega)\ ; \ \int_{\omega}\eta \ ds=0\right\rbrace.
\end{equation}
Since we look for solutions such that \eqref{com1} is verified, it is convenient to consider the restriction of the associated semigroup to the plate equation to $L^2_0(\omega)$.
Then, in what follows, we assume that $\eta\in L^2_0(\omega)$.
To impose the condition \eqref{com2}, we substitute the plate equation by its projection on $L^2_0(\omega)$ using the operator $M$
$$
\partial_{tt} \eta+\alpha\Delta^2\eta-\delta \Delta \partial_t \eta=\mathbb{H}_\eta(U,P)+Mh^S,
$$
where
$$
\mathbb{H}_\eta(U,P)=M\widetilde{\mathbb{H}}_\eta(U,P).
$$
We complement the system \eqref{nav}, \eqref{navb} with the following initial conditions
\begin{equation}
\label{navc}
\left\{
\begin{array}{rl}
U(0,\cdot)=U^0 & \text{in} \  \Omega(\eta^0),\\
\eta(0,\cdot)=\eta^0 & \text{in}\ \omega,\\
\partial_t \eta(0,\cdot)=\eta^1& \text{in}\ \omega.\\
\end{array}
\right.
\end{equation}

Let $(w^S,p^S,\eta^S)$ be a stationary state of the system \eqref{nav}, \eqref{navb} that is a solution of the system
\begin{equation}
\label{stabs}
\left\{
\begin{array}{rl}
(w^S\cdot \nabla)w^S-\nabla \cdot \mathbb{T}(w^S,p^S)=f^S   & \text{in}\ \Omega(\eta^S),\\
\nabla \cdot w^S=0& \text{in} \  \Omega(\eta^S),\\
\alpha\Delta^2\eta^S=\mathbb{H}_{\eta^S}(w^S,p^S)+Mh^S & \text{in} \  \omega,\\
w^S_{n} = 0  & \text{on} \ \partial\Omega(\eta^S),\\
\left[ 2\nu D(w^S) {n}+\beta w^S\right]_{\tau}=0& \text{on} \ \partial\Omega(\eta^S),
\end{array}
\right.
\end{equation}
where 
$$
\beta(y)=
\left\{
\begin{array}{ccccc}
\beta_1 &\text{if}& y\in \Gamma_0,\\
\beta_2 &\text{if}& y\in \Gamma(\eta^S).
\end{array}
\right.
$$
The vector $n$ stands for the unitary exterior normal vector  and $\tau^i$, $i=1,2$ designate two tangent vectors on $\partial\Omega(\eta^S)$:
\begin{equation}
\label{tau1*}
n=-e_3,\quad\tau^i=e_i,\quad i=1,2\text{  on }\Gamma_0,
\end{equation}
and on $\Gamma(\eta^S)$:
\begin{equation}
\label{tau3*}
n(s,1+\eta^S(s))=\frac{N(s,1+\eta^S(s))}{|N(s,1+\eta^S(s))|} \quad \text{where} \quad
N(s,1+\eta^S(s))=
\begin{pmatrix}
-\partial_{s_1}\eta^S(s)\\-\partial_{s_2}\eta^S(s)  \\ 1
\end{pmatrix}
\quad s\in \omega,
\end{equation} 
\begin{equation}
\label{tau2*}
\tau^1(s,1+\eta^S(s))=\begin{pmatrix}
1\\ 0 \\ \partial_{s_1}\eta^S(s)
\end{pmatrix},\quad \tau^2(s,1+\eta^S(s))=\begin{pmatrix}
0\\ 1 \\ \partial_{s_2}\eta^S(s)
\end{pmatrix}.
\end{equation}

Our aim is to stabilize the solution $(U,P,\eta)$ of \eqref{nav}, \eqref{navb}, \eqref{navc} around the stationary state $(w^S,p^S,\eta)$ that is a solution of \eqref{stabs} by means of a feedback boundary control $v(t)$ that appears in \eqref{navb} depending on the state $(U,\eta)$. In practice, due to a calculus time issues, we include a time delay $t_0>0$ in the control and the main goal is to construct this feedback control in such a way it depends at time $t$ on the values of the state $(U(t'),\eta(t'))$ for $t'\leq t-t_0$  and the associated strong solution $(U,P,\eta)$  to the system \eqref{nav}, \eqref{navb}, \eqref{navc} goes to $(w^S,p^S,\eta^S)$ exponentially.
This context of stabilization has an important interest in the control theory of PDE: The analysis of the effect of the time delay on the feedback stabilization of some partial differential equations is given for instance in \cite{MR818942} and \cite{MR937679}. Recently, in \cite{MR3767485} a feedback control is constructed for a finite dimensional system with input delay and in \cite{MR3936420} the authors manage to obtain a stabilizing feedback boundary control to the one-dimensional reaction-diffusion system considering a constant delay, their method relies essentially on the Arstein transform and the fact that the generator operator of the system is self-adjoint. We mention also the work \cite{lhachemi2019boundary} where the author considered the stabilization of a damped Euler-Bernoulli beam equation with a time delayed control, here the system is parabolic but it is not generated by a self-adjoint operator. In \cite{hal-02545562}, the authors extend the theory developed in \cite{MR3936420} and construct a finite dimensional feedback stabilizing control with input delay to a general class of parabolic systems. The challenge in the present work is to apply the theory of \cite{hal-02545562} to deduce the stabilization of the fluid-structure problem \eqref{nav}, \eqref{navb} since it describes a parabolic system thanks to the damping term $-\delta \Delta \partial_t \eta$ that appears in the structure equation in \eqref{nav}. 

In order to state the main result, we give some notations.
As it is standard in the studies of fluid-structure interaction systems, one of the main difficulties lies in the fact that the spatial domain of the fluid is variable and unknown. Since the problem consists to compare the asymptotic behavior of the solution $(U,P,\eta)$ to the stationary state $(w^S,p^S,\eta^S)$, we are led to introduce a change of variable to transform the functions $(U,P)$ into $(\widetilde{u},\widetilde{p})$ defined in the fixed domain $\Omega(\eta^S)$. More precisely, we set
\begin{equation}
\label{chg21}
X_{\eta^1,\eta^2} : \Omega(\eta^1) \longrightarrow \Omega(\eta^2), \quad 
\begin{pmatrix} y_1 \\ y_2\\ y_3 \end{pmatrix} \longmapsto \begin{pmatrix}  y_1 \\ y_2\\ \dfrac{1+\eta^2(y_1,y_2)}{1+\eta^1(y_1,y_2)} y_3 \end{pmatrix}.
\end{equation}
We consider the change of variables
\begin{equation}\label{chg2}
X(t,\cdot)=X_{\eta^S,\eta(t,\cdot)}, \quad Y(t,\cdot)=X_{\eta(t,\cdot),\eta^S},
\end{equation}
and we set
$$
\Omega=\Omega(\eta^S).
$$
We define the new functions

\begin{equation}
\label{chg3}
\widetilde{u}(t,y)=(\Cof\nabla X(t,y))^*U(t,X(t,y)) , \quad \widetilde{p}(t,y)=P(t,X(t,y)) \quad (t\geq 0, \ y\in \Omega).
\end{equation}
We observe that we have
$$
\widetilde{u}^0=(\Cof\nabla X(0,\cdot))^*U^0(X(0,\cdot)). 
$$

In what follows, we recall that $n$ is the unitary exterior normal vector  and $\tau^i$, $i=1,2$ are two tangent vectors on $\partial\Omega$ already defined in \eqref{tau1*}, \eqref{tau2*} and \eqref{tau3*}.

The operators $(A_1,\mathcal{D}(A_1))$ and $(A_2,\mathcal{D}(A_2))$  are defined by 
\begin{align}
\label{A1A2*}
A_1 \eta=  \alpha\Delta^2\eta,
\quad \mathcal{D}(A_1)=H^4(\omega) \cap L^2_0(\omega),\\
A_2 \eta=-\delta \Delta \eta,
\quad \mathcal{D}(A_2)=H^2(\omega) \cap L^2_0(\omega).
\end{align}
Let $\mathfrak{X}_1$, $\mathfrak{X}_2$ be two Banach spaces with the norms $\left\| .\right\|_{\mathfrak{X}_1}$ and $\left\| .\right\|_{\mathfrak{X}_2}$ respectively. For $s\geq 0$, we define
$$W^{s}(0,\infty;\mathfrak{X}_1,\mathfrak{X}_2)=\left\lbrace v\in L^2(0,\infty;\mathfrak{X}_1)\ ; \ v\in H^s(0,\infty;\mathfrak{X}_2)\right\rbrace, $$
with
$$\left\|.\right\|_{W^{s}(0,\infty;\mathfrak{X}_1,\mathfrak{X}_2)}=\left\|.\right\|_{L^2(0,\infty;\mathfrak{X}_1)}+\left\|.\right\|_{H^s(0,\infty;\mathfrak{X}_2)}.$$
For $s=1$, the space $W^1(0,\infty;\mathfrak{X}_1,\mathfrak{X}_2)$ is denoted by $W(0,\infty;\mathfrak{X}_1,\mathfrak{X}_2)$. 

Let $f(t)$ be a vector valued function. For $\gamma>0$, we define $f_\gamma$ by
$$
f_\gamma: t\mapsto e^{{\gamma} t}f(t). 
$$	
Then, for $\gamma>0$, we consider the following spaces
\begin{equation}
\label{L2g4}
L^p_{{\gamma}}(0,\infty;\mathfrak{X}_1)=\{f\in L^p(0,\infty;\mathfrak{X}_1) \ ; \ f_\gamma\in L^p(0,\infty;\mathfrak{X}_1)\},\quad p\in[1,+\infty],
\end{equation}
and
\begin{equation}
\label{Wg4}
W_{{\gamma}}^{s}(0,\infty;\mathfrak{X}_1,\mathfrak{X}_2)=\{f\in W^{s}(0,\infty;\mathfrak{X}_1,\mathfrak{X}_2) \ ; \ f_{\gamma}\in W^{s}(0,\infty;\mathfrak{X}_1,\mathfrak{X}_2)\},
\end{equation}
with the norms
$$
\left\|f\right\|_ {L^p_{\gamma}(0,\infty;\mathfrak{X}_1)}=\left\|f_{\gamma}\right\|_{L^p(0,\infty;\mathfrak{X}_1)},
$$
$$
\left\|f\right\|_{W_{\gamma}^{s}(0,\infty;\mathfrak{X}_1,\mathfrak{X}_2)}=\left\| f_{\gamma}\right\|_ {W^{s}(0,\infty;\mathfrak{X}_1,\mathfrak{X}_2)}. 
$$
We set
\begin{equation}
\label{spceXg4}
\mathcal{X}_{\infty,\gamma}=W_{\gamma}(0,\infty;[H^2(\Omega)]^3,[L^2(\Omega)]^3)  \times L_{\gamma}^2(0,\infty;H^1(\Omega)/\mathbb{R})\times W^2_{\gamma}(0,\infty;\mathcal{D}(A_1),L_0^2(\omega)),
\end{equation}
such that
\begin{multline}\label{tak3.14}
\left\|(u,p,\eta)\right\|_{\mathcal{X}_{\infty,\gamma}}=\left\| u\right\|_{W_{\gamma}(0,\infty;[H^2(\Omega)]^3,[L^2(\Omega)]^3)}
+\left\| u\right\|_{L_{\gamma}^{\infty}(0,\infty;[H^1(\Omega)]^3)}
+\left\|\nabla p\right\|_{L_{\gamma}^2(0,\infty,[L^2(\Omega)]^3)}
\\
+\left\|\eta\right\|_{W_{\gamma}^2(0,\infty;\mathcal{D}(A_1),L_0^2(\omega))}
+\left\|\eta\right\|_{L^\infty_{\gamma}(0,\infty;H^3(\omega))}
+\left\|\partial_t \eta\right\|_{L_{\gamma}^\infty(0,\infty;H^1(\omega))}.
\end{multline}
We define the space
$$
\mathcal{H}=[L^2(\Omega)]^3\times \mathcal{D}(A_1^{1/2})\times L^2_0(\omega),
$$
furnished with the inner product
$$
\left\langle \begin{pmatrix}
\phi\\\zeta_1\\\zeta_2
\end{pmatrix},\begin{pmatrix}
\psi\\\xi_1\\\xi_2
\end{pmatrix} \right\rangle_{\mathcal{H}}= \int_\Omega \phi\cdot \psi\ dy+\int_\omega A_1^{1/2}\zeta_1\cdot A_1^{1/2}\xi_1 \ ds+\int_\omega \zeta_2\cdot \xi_2\ ds.
$$
We set 
\begin{equation}
\label{grw}
\widetilde{W}=\begin{pmatrix}
\widetilde{u}-w^S\\ \eta-\eta^S \\ \partial_t\eta
\end{pmatrix},
\end{equation}
where the function $\widetilde{u}$ is defined by \eqref{chg3}.

Finally, we suppose that the initial conditions verify
\begin{equation}
\label{co1}
(\widetilde{u}^0,\eta^0,\eta^1)\in [H^1(\Omega)]^3\times H^3(\omega)\times H^1(\omega)
\end{equation}
\begin{equation}
\label{co}
\nabla\cdot\widetilde{u}^0=0 \text{ in} \ \Omega,\quad (\widetilde{u}^0-\eta^1e_3)_{n}=0 \ \text{on} \ \Gamma(\eta^S),\quad \widetilde{u}^0_{n}=0 \ \text{on} \ \Gamma_0.
\end{equation}
Now, we state the main result of this paper.
\begin{Theorem}\label{thmain2}
Let $t_0>0$, $\gamma>0$,
\begin{equation}
f^S\in [W^{2,\infty}(\Omega)]^3,\quad h^S\in L^2(\omega),
\end{equation}
\begin{equation}
\label{regstat}
(w^S,p^S,\eta^S)\in [W^{2,\infty}(\Omega)]^3\times W^{1,\infty}(\Omega)/ \mathbb{R}\times C^4(\omega) ,
\end{equation} 
with
\begin{equation}
\label{smallnes}
1+\eta^S>0,
\end{equation}
such that the system \eqref{stabs} is verified and $(\widetilde{u}^0,\eta^0,\eta^1)$ satisfying \eqref{co1}, \eqref{co}.
Then, there exist
$$
N_\gamma\in\mathbb{N}^*,\quad K\in L^\infty_{\rm loc}(\mathbb{R}^2;\mathcal{L}(\mathcal{H})),
$$
$$
(\phi_k,\zeta^k_1,\zeta_2^k)\in [H^2(\Omega)]^3\times \mathcal{D}(A_1)\times \mathcal{D}(A_1^{1/2}),\ v_k\in [H^{3/2}(\Gamma_0)]^3,\;k=1,\ldots,N_\gamma,
$$ 
and $R>0$, 
such that if 
$$
\left\|\widetilde{u}^0-w^S \right\|_{[H^1(\Omega)]^3}+ \left\|\eta^0-\eta^S \right\|_{H^3(\omega)}+\left\|\eta^1\right\|_{H^1(\omega)}\leq R,
$$
there exists a unique strong solution $(U,P,\eta)$ of \eqref{nav}, \eqref{navb} and \eqref{navc} associated to
\begin{equation}
\label{fb3}
v(t)=1_{[t_0,+\infty)}(t) \sum_{k=1}^{N_\gamma} \left(\widetilde{W}(t-t_0)+\int_0^{t-t_0} K(t-t_0,s)\widetilde{W}(s) \ ds,\begin{pmatrix}
\phi_k\\\zeta^k_1\\\zeta_2^k
\end{pmatrix}\right)_{\mathcal{H}} v_k,
\end{equation}
where $\widetilde{W}$ is given by \eqref{grw}.

Moreover, $(\widetilde{u}-w^S,\widetilde{p}-p^S,\eta-\eta^S) \in \mathcal{X}_{\infty,\gamma}$ where the functions $\widetilde{u}$, $\widetilde{p}$ are defined by \eqref{chg3} and we have the following estimate
\begin{equation}\label{ijt202}
\left\|(\widetilde{u}-w^S,\widetilde{p}-p^S,\eta-\eta^S)\right\|_{\mathcal{X}_{\infty,\gamma}}
\leq C\left(\left\|\widetilde{u}^0-w^S \right\|_{[H^1(\Omega)]^3}+ \left\|\eta^0-\eta^S \right\|_{H^3(\omega)}+\left\|\eta^1\right\|_{H^1(\omega)}\right).
\end{equation}

\end{Theorem}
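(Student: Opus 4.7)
The plan is to follow the standard three-step road map for boundary stabilization of fluid-structure systems: (i) linearize in a fixed reference domain, (ii) stabilize the linear system in feedback form using the Fattorini--Hautus criterion combined with the delayed-control machinery of \cite{hal-02545562}, (iii) close the nonlinear problem by a fixed-point argument in $\mathcal{X}_{\infty,\gamma}$. First, I would use the change of variables $X(t,\cdot)=X_{\eta^S,\eta(t,\cdot)}$ defined in \eqref{chg2}--\eqref{chg3} to transport $(U,P)$ onto the fixed domain $\Omega=\Omega(\eta^S)$, obtaining new unknowns $(\widetilde{u},\widetilde{p})$ and, after subtracting the stationary state, fluctuations $(u,p,\eta_*):=(\widetilde{u}-w^S,\widetilde{p}-p^S,\eta-\eta^S)$. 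Writing the equations \eqref{nav}--\eqref{navb} in $(u,p,\eta_*,\partial_t\eta_*)$ produces a linear coupled Stokes/damped-plate system posed on $\Omega\times(0,\infty)$ and $\omega\times(0,\infty)$, plus nonlinear remainders $F(u,p,\eta_*)$, $H(u,p,\eta_*)$ which are quadratic and gain spatial regularity (this step only requires careful bookkeeping on $\Cof\nabla X$, the transported divergence, the transported Navier slip conditions \eqref{navb}, and the Jacobian factor hidden in $\widetilde{\mathbb{H}}_\eta$, and relies on \eqref{regstat}--\eqref{smallnes}).

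Next I would recast the linearized system in the abstract form $W'=\mathcal{A}W+\mathcal{B}v$ on $\mathcal{H}=[L^2(\Omega)]^3\times \mathcal{D}(A_1^{1/2})\times L_0^2(\omega)$, where $\mathcal{A}$ is the generator obtained from the Stokes operator coupled with $A_1$, $A_2$ through the boundary conditions on $\Gamma(\eta^S)$, and $\mathcal{B}v$ is the lifting of the control acting through $\mathbb{M}v$ on $\Gamma_0$. Classical arguments (resolvent compactness through $H^2\times \mathcal{D}(A_1)\times \mathcal{D}(A_1^{1/2})$ compactly embedded in $\mathcal{H}$, sectoriality from the parabolicity induced by $-\delta\Delta\partial_t\eta$) should give that $\mathcal{A}$ generates an analytic semigroup with compact resolvent, hence a discrete spectrum. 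To apply \cite{hal-02545562} with the delay $t_0$, I would split $\mathcal{H}=\mathcal{H}_u^\gamma\oplus\mathcal{H}_s^\gamma$ along the finite number $N_\gamma$ of eigenmodes of $\mathcal{A}$ with real part $\geqslant-\gamma$, and verify the Fattorini--Hautus test on $\mathcal{H}_u^\gamma$: for any eigenvalue $\lambda$ of $\mathcal{A}^*$ with $\Re\lambda\geqslant-\gamma$ and any eigenvector $(\phi,\zeta_1,\zeta_2)$, the condition $\mathcal{B}^*(\phi,\zeta_1,\zeta_2)=0$ must imply $(\phi,\zeta_1,\zeta_2)=0$.

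The computation of $\mathcal{B}^*$ reduces this to a unique continuation statement for the adjoint Stokes/plate system: if an eigenpair $(\phi,q,\zeta_1,\zeta_2)$ solves the stationary adjoint system on $\Omega$ together with $\mathbb{M}^*\bigl(2\nu D(\phi)n+\beta_1\phi\bigr)_\tau\equiv 0$ and $(\mathbb{M}^*\phi)_n\equiv 0$ on the support of $m$ in $\Gamma_0$, then it vanishes identically. This is exactly the kind of unique continuation which the abstract mentions as the main tool; I would prove it by a Holmgren/Carleman-type argument or, more concretely in this setting, by observing that on the support of $m$ one has Cauchy data $(\phi,2\nu D(\phi)n)$ for the Stokes operator, applying the unique continuation theorem of Fabre--Lebeau to propagate $\phi\equiv 0$ through $\Omega$, then recovering $q=\mathrm{const}$ and finally using the coupling with the plate equation on $\Gamma(\eta^S)$ to deduce $\zeta_1=\zeta_2=0$. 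This is the step I expect to be the main obstacle: the Navier slip boundary conditions only fix the tangential traction and normal component, so one must be careful that the available information $\mathcal{B}^*W=0$ really gives Cauchy data strong enough to trigger Fabre--Lebeau, and the coupling through $\widetilde{\mathbb{H}}_\eta$ has to be used to propagate from $\Gamma(\eta^S)$ to the plate modes.

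Once Fattorini--Hautus is established, the framework of \cite{hal-02545562} provides a finite number of actuators $v_1,\ldots,v_{N_\gamma}\in[H^{3/2}(\Gamma_0)]^3$ and a feedback kernel $K\in L^\infty_{\rm loc}(\mathbb{R}^2;\mathcal{L}(\mathcal{H}))$ together with eigenfunctions $(\phi_k,\zeta_1^k,\zeta_2^k)$ such that the delayed feedback \eqref{fb3} stabilizes the linear system at rate $\gamma$ in $\mathcal{H}$, and in fact in the stronger norm $\mathcal{X}_{\infty,\gamma}$ after maximal regularity for the coupled Stokes/plate problem is invoked. I would conclude by a Banach fixed-point argument in a small ball of $\mathcal{X}_{\infty,\gamma}$: the map sending $(u,p,\eta_*)$ to the solution of the linear stabilized system with right-hand side $(F(u,p,\eta_*),H(u,p,\eta_*))$ and initial data $(\widetilde{u}^0-w^S,\eta^0-\eta^S,\eta^1)$ is a contraction provided $R$ is small enough, using that the nonlinearities are at least quadratic and that the changes of variables $X,Y$ depend smoothly on $\eta_*\in L^\infty_\gamma(0,\infty;H^3(\omega))$ thanks to \eqref{smallnes}. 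This gives both existence and the estimate \eqref{ijt202}.
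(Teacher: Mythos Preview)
Your plan is essentially the paper's own road map: change of variables to the fixed domain, careful linearization, analytic semigroup with compact resolvent, Fattorini--Hautus via Fabre--Lebeau unique continuation, invocation of \cite{hal-02545562} for the delayed feedback, then a Banach fixed point in $\mathcal{X}_{\infty,\gamma}$. The Fattorini--Hautus verification in the paper proceeds exactly as you anticipate: $B^*(\phi,\zeta_1,\zeta_2)=0$ yields $\mathbb{T}(\phi,r-c)n=0$ on the support of $m$, Fabre--Lebeau then gives $\phi\equiv0$ and $r$ constant in $\Omega$, the coupling $[\phi-\mathcal{T}\zeta_2]_n=0$ on $\Gamma(\eta^S)$ forces $\zeta_2=0$, and the plate equation collapses to $A_1\zeta_1=0$, hence $\zeta_1=0$.

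There is, however, one point you gloss over which the paper treats as the central technical difficulty (its Remark~2.1 and the long decomposition lemma of Section~2). You write that the change of variables produces ``a linear coupled Stokes/damped-plate system \ldots\ plus nonlinear remainders \ldots\ which are quadratic''. This is not automatic. After transporting to $\Omega(\eta^S)$ and subtracting the stationary state, the source terms contain contributions such as $\det(\nabla X)f^S(X)-f^S$, $-\nu(\Delta-\mathbf{L}_{\xi+\eta^S})w^S$, $(\nabla-\mathbf{G}_{\xi+\eta^S})p^S$, and analogous boundary terms, all of which are \emph{linear} in $\xi=\eta-\eta^S$ at leading order. If these are left on the right-hand side, the ``nonlinearity'' is $O(R)$ rather than $O(R^2)$ and the contraction fails. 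The paper therefore extracts explicit linear operators $\mathcal{L}^1(\xi)$, $\mathcal{L}^2(\xi,\partial_t\xi)$, $\mathcal{L}^3(\xi)$ and moves them into the generator: the linearized system is not merely Oseen/damped-plate but carries these additional perturbation terms in the interior equation, in the plate equation (via $\mathcal{T}^*(\mathcal{L}^1(\xi)n)$), and in the Navier boundary condition. A further subtlety is that the \emph{same} $\mathcal{L}^1$ must appear consistently in all three places so that the integrations by parts underlying the semigroup and adjoint computations close. Once this decomposition is carried out, the perturbed generator $A_S$ is still analytic (it differs from the unperturbed one by a relatively bounded perturbation $O_S$ with $\mathbb{V}\subset\mathcal{D}(O_S)$), the adjoint only acquires harmless lower-order terms, and the residual nonlinearity is genuinely quadratic so that your fixed-point step goes through. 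Your outline is correct in spirit, but this extraction of the linear-in-$\xi$ contributions is where most of the actual work in the proof lies.
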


\begin{Remark}
We can obtain analogous results if we consider the classical Dirichlet boundary conditions 
\begin{equation}
\label{dir}
\left\{
\begin{array}{rl}
U=\mathbb{M}v & \text{on} \ (0,\infty)\times\Gamma_0,\\
(U-\partial_t \eta e_3)=0&  \text{on} \ (0,\infty)\times\Gamma(\eta).\\
\end{array}
\right.
\end{equation}	
We underline that if $\beta_{1}=0$, the \cref{thmain2} implies that the system \eqref{nav}, \eqref{navb} and \eqref{navc} is exponentially stabilizable by considering only a scalar control in the impermeability boundary condition . 
\end{Remark}
\begin{Remark}
Here, the regularity of $\eta^S$ ensures to have a sufficient regular domain to apply elliptic estimates of the Stokes problem with Navier-slip boundary conditions proved in \cite{BdV}. 
\end{Remark}
In this paper, we are interested in the feedback stabilization of the system \eqref{nav}, \eqref{navb} and \eqref{navc} that is similar to the system studied in \cite{MT} and \cite{JPR} except that here, we consider the Navier boundary conditions. These conditions were introduced in \cite{navier1823} by Navier in 1823. The consideration of this type of boundary conditions is significant in many physical aspects, see for instance  \cite{MR2123407, MR884296, doi:10.1002/fld.1650040302}. Note that the existence and uniqueness of the strong solution for the system \eqref{nav}, \eqref{navb}, \eqref{navc} have been obtained recently in \cite{MR3962841}. Concerning the stabilization without delay for fluid structure problems, there are few results in the literature. In \cite{JPR}, the author considered a coupling system that models the interaction between an incompressible  fluid governed by the Navier-Stokes system and an elastic structure that enjoys the damped Euler-Bernoulli equation in a rectangular domain considering the Dirichlet boundary conditions, the control in this case is distributed along the structure. The author shows the exponential stabilization of the strong solution of the considered system around zero. The same system was studied in \cite{MT}, where the authors considered a boundary control acting on a located part of the fluid boundary. The authors obtained a stabilization result of the weak solution around a stationary state. In the case where the structure is a rigid body, we have \cite{MR3181675}, \cite{MR3261920}.

In this work, we construct a boundary feedback control $v$ with a time delay $t_0>0$ (in the direction of \cite{hal-02545562}) such that the strong solution of our system is exponentially stabilizable around a fixed stationary state. Due to regularity concerns, compatibility conditions at $t=0$ between $v$ and the initial conditions should be imposed, see for example \cite{MR2567253,MR2516189,MR2851895}. To  overcome this difficulty, some strategies was developed in \cite{MR2335090,MB1}. In the case of a time delay, the control acts on the system from $t_0>0$. Consequently, the control vanishes at $t=0$. Then, the compatibility conditions are automatically satisfied and this is a good advantage of the time delay.

The paper is organized as follows: in \cref{se1}, we write the system \eqref{nav}, \eqref{navb}, \eqref{navc} in a fixed domain using the change of variables \eqref{chg2} and \eqref{chg3}.  Next, we study the associated linear system in \cref{sl} that can be written as  
\begin{equation}\label{ijt000*}
z'=Az+Bv+f, \quad z(0)=z^0.
\end{equation} 
Taking into account the \cref{r}, the linearized problem is written as a system coupling the Oseen equations and the damped plate equation, disturbed by some linear terms appearing in the whole system even in the boundary conditions. We show that the infinitesimal operator associated to the linear system is analytic and we establish a first result of a time delayed stabilization of the linearized problem. In fact, we show that the Fattorini-Hautus criterion is satisfied
\begin{equation}
\forall\varepsilon\in \mathcal{D}(A^*), \ 
\forall \lambda\in\mathbb{C},\ 
\Re\lambda\geq -\sigma\quad
A^*\varepsilon= \lambda \varepsilon \quad \text{et} \quad B^*\varepsilon=0
\quad\Longrightarrow\quad 
\varepsilon = 0.  \tag{$\text{UC}_{\sigma}$}\label{UCstab*}
\end{equation}
More precisely, the condition \eqref{UCstab*} is obtained using the unique continuation property of an auxiliary Oseen system with Neumann boundary condition. 
Finally, we prove the \cref{thmain2} applying the Banach fixed point argument.
\section{Change of variables}
\label{se1}
Let us define $\mathcal{T}_\eta$  
\begin{equation}
\label{teta1}
(\mathcal{T}_{\eta}\xi)(y)=
\left\{
\begin{array}{ccccc}
0 &\text{if}& y\in \Gamma_0,\\
\xi(s) e_3 &\text{if}& y=(s,1+\eta(s))\in \Gamma(\eta),
\end{array}
\right.
\end{equation}
for any function $\eta: \omega\longrightarrow(-1,\infty)$. We notice that  $\mathcal{T}_{\eta}\in \mathcal{L}(L^2(\omega);[L^2(\partial \Omega(\eta))]^3)$
and
\begin{equation*}
(\mathcal{T}^*_{\eta}\zeta)(s)=\sqrt{1+|\nabla \eta|^2}\zeta(s,1+\eta(s))\cdot e_3, \quad \forall \zeta\in[L^2(\partial \Omega(\eta))]^3.
\end{equation*}
We define $\mathcal{T}$ by 
$$
\mathcal{T}=\mathcal{T}_{\eta^S}M,
$$
where we recall that $M$ is the orthogonal projection on the space $L^2_0(\omega)$ defined by \eqref{l2}.

Using \eqref{chg2} and \eqref{chg3}, the system \eqref{nav}, \eqref{navb}, \eqref{navc} is equivalent to
\begin{equation}\label{nav2}
\left\{
\begin{array}{rl}
\partial_t \widetilde{u}-\nabla\cdot\mathbb{T}(\widetilde{u},\widetilde{p})=\det(\nabla X)f^S(X)+(\Id-\mathbf{K}_\eta)\partial_t \widetilde{u}
-\nu(\Delta-\mathbf{L}_\eta)\widetilde{u}\\+(\nabla-\mathbf{G}_\eta)\widetilde{p} +\mathbf{M}_\eta\widetilde{u}+\mathbf{N}_\eta\widetilde{u} & \text{in}\ (0,\infty)\times \Omega,\\
\nabla \cdot \widetilde{u}=0  & \text{in}\ (0,\infty)\times \Omega,\\
\partial_{tt} \eta+A_1\eta +A_2 \partial_t \eta=\mathbb{H}_{\eta^S}(\widetilde{u},\widetilde{p})+H(\widetilde{u},\eta)+Mh^S &\text{in}\ (0,\infty)\times \omega, \\
\end{array}
\right.
\end{equation}
with the boundary conditions
\begin{equation}\label{nav2b}
\left\{
\begin{array}{rl}
\left[\widetilde{u}-\mathcal{T}\partial_t \eta\right]_{n}=1_{\Gamma_0}(\mathbb{M}v)_{n} &  \text{on}\ (0,\infty)\times \partial\Omega,\\
\left[2\nu D(\widetilde{u})n+\beta (\widetilde{u}-\mathcal{T}\partial_t \eta)\right]_{\tau}=1_{\Gamma_0}\beta (\mathbb{M}v)_{\tau}+G(\widetilde{u},\eta) &  \text{on}\ (0,\infty)\times \partial\Omega,
\end{array}
\right. 
\end{equation}
 and with the initial conditions
\begin{equation}\label{nav2c}
\left\{
\begin{array}{rl}
\widetilde{u}(0,\cdot )=(\Cof\nabla X(0,.))^*U^0(X(0,\cdot))=\widetilde{u}^0  &\text{in}\ \Omega,\\
\eta(0, \cdot)=\eta^0 &\text{in}\  \omega,\\
\partial_t \eta(0,\cdot)=\eta^1&\text{in}\  \omega.\\
\end{array}
\right.
\end{equation}
We precise here that $1_{\Gamma_0}$ is the characteristic function of the set $\Gamma_{0}$.
To simplify the notations, we set
$$
a=(\Cof(\nabla Y))^*, \quad b=(\Cof(\nabla X)),
$$
$$
\mathbf{K}_\eta\widetilde{u}=(\nabla X) \widetilde{u},
$$
$$
\mathbf{G}_\eta\widetilde{p}=b\nabla \widetilde{p}.
$$
We have the following formulas
\begin{multline}
\label{defFS}
[-\nu(\Delta-\mathbf{L}_\eta)\widetilde{u}]_i
=
\nu\sum_{j,k,l,m}\left( \det(\nabla X)a_{ik}(X)\frac{\partial Y_m}{\partial x_j}(X)\frac{\partial Y_l}{\partial x_j}(X)-\delta_{ik}\delta_{mj}\delta_{jl}\right) 
\frac{\partial^2\widetilde{u}_k}{\partial y_l\partial y_m}
\\
+\nu\sum_{j,k,l,m}\left( \det(\nabla X)a_{jk}(X)\frac{\partial Y_m}{\partial x_i}(X)\frac{\partial Y_l}{\partial x_j}(X)-\delta_{jk}\delta_{mi}\delta_{jl}\right) 
\frac{\partial^2\widetilde{u}_k}{\partial y_l\partial y_m}
\\
+\det(\nabla X)\Bigg[\nu\sum_{j,k,l}\left(\frac{\partial a_{ik}}{\partial x_j}(X)\frac{\partial Y_l}{\partial x_j}(X)+\frac{\partial a_{jk}}{\partial x_i}(X)\frac{\partial Y_l}{\partial x_j}(X)\right) \frac{\partial \widetilde{u}_k}{\partial y_l}
\\
+\nu\sum_{j,k,l}\bigg(\frac{\partial a_{ik}}{\partial x_j}(X)\frac{\partial Y_l}{\partial x_j}(X)+a_{ik}(X)\frac{\partial ^2Y_l}{\partial x_j^2}(X)+\frac{\partial a_{jk}}{\partial x_j}(X)\frac{\partial Y_l}{\partial x_i}(X)\\+a_{jk}(X)\frac{\partial ^2Y_l}{\partial x_j\partial x_i}(X)\bigg) \frac{\partial \widetilde{u}_k}{\partial y_l}
+\nu\sum_{k}\left( \frac{\partial^2a_{ik}}{\partial x_j^2}(X)+\frac{\partial^2a_{jk}}{\partial x_j\partial x_i}(X)\right) \widetilde{u}_k\Bigg],
\end{multline}
\begin{equation}
[(\nabla-\mathbf{G}_\eta)\widetilde{p}]_i=\sum_k (\delta_{ik}-b_{ik})\frac{\partial \widetilde{p}}{\partial y_k}=\sum_k(\delta_{ik}-\det(\nabla X)\frac{\partial Y_k}{\partial x_i}(X))\frac{\partial \widetilde{p}}{\partial y_k},
\end{equation}
\begin{equation}
[\mathbf{N}_\eta\widetilde{u}]_i=-\sum_{k,l,j}\det(\nabla X)a_{kl}(X)\frac{\partial a_{ij}(X)}{\partial x_k}\widetilde{u}_l\widetilde{u}_j
-\sum_{k,l,j,m}\det(\nabla X)a_{kl}(X)a_{ij}(X)\frac{\partial Y_m}{\partial x_k}(X)\widetilde{u}_l\frac{\partial \widetilde{u}_j}{\partial y_m},
\end{equation}
\begin{equation}
[\mathbf{M}_\eta\widetilde{u}]_i=-\sum_{l,k}\det(\nabla X)a_{ik}(X)\frac{\partial \widetilde{u}_k}{\partial y_l}\partial_tY_l(X)
\\-\sum_k\det(\nabla X)\partial_t a_{ik}(X)\widetilde{u}_k,\quad i=1,2,3.
\end{equation} 
The non linear term $H$ appearing in the plate equation writes 
\begin{multline}
\label{defHS}
H(\widetilde{u},\eta)=\nu M\Bigg[
-\sum_{j,k}\left( \frac{\partial a_{3k}}{\partial x_j}(X)+\frac{\partial a_{jk}}{\partial x_3}(X)\right)\widetilde{N}_j \widetilde{u}_k
\\+\sum_{j,k,l} \left(\delta_{3k}\delta_{jl}(N)_j-a_{3k}(X)\frac{\partial Y_l}{\partial x_j}(X)\widetilde{N}_j\right)\frac{\partial \widetilde{u}_k}{\partial y_l}
\\
+\left( \delta_{3l}\delta_{jk}(N)_j-a_{jk}(X)\frac{\partial Y_l}{\partial x_3}(X)\widetilde{N}_j\right)\frac{\partial \widetilde{u}_k}{\partial y_l}\Bigg].
\end{multline}
Now let us deal with the boundary conditions. Let $\mathcal{W}$ be the operator defined by 
\begin{multline}
\label{tak4.1*}
[\mathcal{W}(\widetilde{u},\eta)]_k
= \nu\sum_{j,m} \widetilde{n}_j(X)\left( \frac{\partial a_{km}}{\partial x_j}(X)\widetilde{u}_m+\frac{\partial a_{jm}}{\partial x_k}(X)\widetilde{u}_m\right)
+\beta\left( \sum_j a_{kj}(X) \widetilde{u}_j-\mathcal{T}\partial_t\eta\cdot e_k\right)
\\
+\nu\sum_{j,m,q} \widetilde{n}_j(X)\left( a_{km}(X)\frac{\partial \widetilde{u}_m}{\partial y_q}\frac{\partial Y_q}{\partial x_j}(X)
+a_{jm}(X)\frac{\partial \widetilde{u}_m}{\partial y_q}\frac{\partial Y_q}{\partial x_k}(X)\right) ,
\quad k=1,2,3.
\end{multline} 
The boundary conditions \eqref{navb} become after the change of variables \eqref{chg2} and \eqref{chg3}
\begin{equation}
\label{sof}
\left\{
\begin{array}{rl}
(\widetilde{u}-\mathcal{T}\partial_t\eta)_{n} =1_{\Gamma_0} (\mathbb{M}v)_{n}  & \text{on} \ (0,\infty)\times\partial\Omega,\\
\mathcal{W}(\widetilde{u},\eta)\cdot(\widetilde{\tau}^i(X))=1_{\Gamma_{0}}\beta(\mathbb{M}v)\cdot(\widetilde{ \tau}^i(X)),\quad i=1,2 &\text{  on } (0,\infty)\times\partial\Omega,
\end{array}
\right.
\end{equation}
where $\widetilde{ \tau}^i$ are defined by \eqref{tautild1} and \eqref{tautild2}. The second boundary condition in \eqref{sof} can be written as
\begin{equation*}
\left( 2\nu D(\widetilde{u})n+\beta(\widetilde{u}-\mathcal{T}\partial_t \eta)\right)\cdot\tau^i=\mathcal{V}^i(\widetilde{u},\eta)+1_{\Gamma_{0}}\beta(\mathbb{M}v)\cdot\tau^i,\quad i=1,2,\ \text{ on }\partial\Omega,
\end{equation*}
where $\tau^i$ are the two tangent vectors on $\partial\Omega$ defined by \eqref{tau1*}, \eqref{tau2*} and $\mathcal{V}^i$ is given by
\begin{multline}
\label{tak4.2S}
\mathcal{V}^i(\widetilde{u},\eta)=
\left( 2\nu D(\widetilde{u})n+\beta(\widetilde{u}-\mathcal{T}\partial_t \eta)\right)\cdot(\tau^i-\widetilde{\tau}^i(X))\\+(2\nu D(\widetilde{u})n+\beta(\widetilde{u}-\mathcal{T}\partial_t \eta)-\mathcal{W}(\widetilde{u},\eta))\cdot \widetilde{\tau}^i(X),\quad i=1,2,\ \text{ on }\partial\Omega.
\end{multline}
We construct an operator $G$ such that $G\cdot n=0$ and $G\cdot\tau^i=\mathcal{V}^i$ on $\partial\Omega$. Consequently, $G$ is defined on $\Gamma(\eta^S)$ by
\begin{equation}
\label{defGS}
\begin{array}{rcl}
G_1&=&\frac{\mathcal{V}^1((\partial_{s_2}\eta^S)^2+1)-\mathcal{V}^2(\partial_{s_1}\eta^S\partial_{s_2}\eta^S)}{| N|^2 }, \\
G_2&=&\frac{\mathcal{V}^2((\partial_{s_1}\eta^S)^2+1)-\mathcal{V}^1(\partial_{s_1}\eta^S\partial_{s_2}\eta^S)}{| N|^2 },\\
G_3&=&\frac{\partial_{s_1}\eta^S\mathcal{V}^1+\partial_{s_2}\eta^S\mathcal{V}^2}{| N|^2},
\end{array}
\end{equation}
and $G$ is defined on $\Gamma_0$ by $G=\sum_{i=1}^2\mathcal{V}^i\tau^i$.


We  set
$$
u=\widetilde{u}-w^S,\quad p=\widetilde{p}-p^S,\quad \xi=\eta-\eta^S.
$$
Then, $(u,p,\xi)$ satisfies the system
\begin{equation}\label{nav3}
\left\{
\begin{array}{rl}
\partial_t u-\nabla \cdot \mathbb{T}(u,p)+(w^S\cdot\nabla)u+(u\cdot \nabla) w^S =F(u,p,\xi)  & \text{in}\ (0,\infty)\times \Omega,\\
\nabla \cdot u=0  & \text{in}\ (0,\infty)\times \Omega,\\
\partial_{tt} \xi+A_1\xi +A_2 \partial_t \xi=\mathbb{H}_{\eta^S}(u,p)+H(u+w^S,\xi+\eta^S) &\text{in}\ (0,\infty)\times \omega, \\
\end{array}
\right.
\end{equation}
where
\begin{multline}
\label{defFbar}
F(u,p,\xi)=(\Id-\mathbf{K}_{\xi+\eta^S})\partial_t u
-\nu(\Delta-\mathbf{L}_{\xi+\eta^S})(u+w^S)+(\nabla-\mathbf{G}_{\xi+\eta^S})(p+p^S) \\+\mathbf{M}_{\xi+\eta^S}(u+w^S)+ \mathbf{N}_{\xi+\eta^S}(u+w^S)+(u\cdot\nabla) w^S+(w^S\cdot\nabla) u+(w^S\cdot\nabla) w^S\\ +\det(\nabla X)f^S(X)-f^S, 
\end{multline}
with the boundary conditions
\begin{equation}\label{nav3b}
\left\{
\begin{array}{rl}
\left[u-\mathcal{T}\partial_t \xi\right]_{n}=1_{\Gamma_0}(\mathbb{M}v)_{n} &  \text{on}\ (0,\infty)\times \partial\Omega,\\
\left[2\nu D(u)n+\beta (u-\mathcal{T}\partial_t \xi)\right]_{\tau}=1_{\Gamma_0}\beta(\mathbb{M}v)_{\tau}+G(u+w^S,\xi+\eta^S) &  \text{on}\ (0,\infty)\times \partial\Omega,
\end{array}
\right. 
\end{equation}
and the initial conditions
\begin{equation}\label{nav3c}
\left\{
\begin{array}{rl}
u(0,\cdot )=\widetilde{u}^0-w^S  &\text{in}\ \Omega,\\
\xi(0, \cdot)=\eta^0-\eta^S &\text{in}\  \omega,\\
\partial_t \xi(0,\cdot)=\eta^1&\text{in}\  \omega.\\
\end{array}
\right.
\end{equation}
\begin{Remark}
\label{r}
For the linearization of the system \eqref{nav3}, \eqref{nav3b}, \eqref{nav3c},
we need to find all the linear terms coming from the change of variables. For example, we observe the term
$$
\det(\nabla X)f^S(X)-f^S=\det(\nabla X)(f^S(X)-f^S)+(\det(\nabla X)-1)f^S.
$$
We recall that $\det(\nabla X)=\frac{1+\eta}{1+\eta^S}$.
We notice that
$$
X(y)=y+\frac{ y_3\xi(y_1,y_2 )}{1+\eta^S(y_1,y_2)} e_3=y+\theta\xi(y_1,y_2) e_3.
$$
We apply Taylor's formula since $f^S\in [W^{2,\infty}(\Omega)]^3$, we get	
\begin{multline}
f^S(X(y))-f^S(y)=\theta\xi(y_1,y_2 ) \nabla f^S(y)e_3\\+\int_0^1(1-s)\nabla^2f^S(y+s\theta\xi(y_1,y_2 ) e_3)\theta\xi(y_1,y_2 ) e_3\cdot\theta\xi(y_1,y_2 ) e_3 \ ds.
\end{multline}
Then
\begin{multline}
\label{16:33}
\det(\nabla X(y))f^S(X(y))-f^S(y)\\=\frac{\xi(y_1,y_2)}{1+\eta^S(y_1,y_2)}f^S(y)+\theta\xi(y_1,y_2) \nabla f^S(y)e_3+\frac{(\xi(y_1,y_2))^2}{1+\eta^S}\theta \nabla f^S(y)e_3\\+\left(1+\frac{\xi(y_1,y_2)}{1+\eta^S(y_1,y_2)}\right) \int_0^1(1-s)\nabla^2f^S(y+s\theta\xi(y_1,y_2 ) e_3)\theta\xi(y_1,y_2 ) e_3\cdot\theta\xi(y_1,y_2 ) e_3 \ ds.
\end{multline}
Now, let assume that $\left\|\xi \right\|_{L_\gamma^2(0,\infty;L^2(\omega))}\leq CR  $, then, we have for example
$$
\left\|\frac{\xi}{1+\eta^S}f^S\right\|_{L_\gamma^2(0,\infty;[L^2(\Omega)]^3)}\leq CR. 
$$
Thus, the terms of this kind should be considered in the linear part of the system or else it will constitute a problem in the fixed point procedure. 
	
To overcome this difficulty, we follow the same approach described in \cite{MT}. More precisely, we write for example
$$
\det(\nabla X(y))f^S(X(y))-f^S(y)=\ell^{(12)}(\xi)+\epsilon^{(12)}(\xi),
$$ 
with 
\begin{equation}
\left\|\epsilon^{(12)}(\xi)\right\|_{L^2_\gamma(0,_\infty; [L^2(\Omega)]^3)}\leq C\left\|\xi\right\|_{L^2_\gamma(0,_\infty; L^2(\omega))}^2(1+\left\|\xi\right\|_{L^2_\gamma(0,_\infty; L^2(\omega))}^m),\quad m\geq 1.  
\end{equation}
where $\ell^{(12)}$ is the linear operator
$$
\ell^{(12)}(\xi)=\frac{\xi}{1+\eta^S}f^S,
$$
and $\epsilon^{(12)}$ is the remaining non linear terms appearing in \eqref{16:33}. Then, the strategy consists to integrate the linear operator $\ell^{(12)}$ in the left hand side of the fluid equation in \eqref{nav3} and to inject $\epsilon^{(12)}$ in the right hand side that will be handled in the fixed point section. 
\end{Remark}
Due to \cref{r}, we need to decompose the functions $F$, $H$ and $G$ into a linear part in $\xi$ and a nonlinear part.

The notation $\partial_s\xi$ (resp. $\partial^2_{ss}\xi$, $\partial^3_{sss}\xi$),  designates in what follows the first derivative $\partial_{s_j}\xi$ (resp. the second derivative $\partial^2_{s_is_j}\xi$, the third derivative $\partial^3_{s_is_js_k}\xi$ ).

We underline that the linear part is not given in a explicit way, we just need to write formally the terms that appear precising the derivative order of $\xi$. The linear parts denoted usually by  $\ell^{(i)}$ will have the form that formally writes 
\begin{equation}
\label{17:17}
\ell^{(i)}(Z_1,...,Z_m)=P^{(i)}_1Z_1+....+P^{(i)}_mZ_m,
\end{equation}
where $P^{(i)}_j$ are coefficients that depend only on the stationary state $(w^S,p^S,\eta^S)$, consequently, they are regular enough. The goal now is to decompose each terms appearing in the formulas of $F(u,p,\xi)$, $H(u+w^s,\xi+\eta^S)$, $G(u+w^S,\xi+\eta^S)$ as it is done in \cref{r}, the formal calculus are done in \cref{fff1-}. Note that since our system is coupled, we need to obtain a convenient match between the linear operators that appear in the fluid equation, the structure equation and even in the boundary conditions. That is given in the following lemma.  
\begin{Lemma} Suppose that
$$	
(w^S,p^S,\eta^S)\in [W^{2,\infty}(\Omega)]^3\times W^{1,\infty}(\Omega)/ \mathbb{R}\times  C^4(\omega).
$$
Then,
\begin{itemize}
\item there exist linear operators $$\mathcal{L}^1(\cdot)\in  \mathcal{L}(\mathcal{D}(A_1^{3/4}),[H^1(\Omega)]^{3\times3}),\quad\mathcal{L}^2(\cdot,\cdot)\in\mathcal{L}(\mathcal{D}(A_1^{1/2})\times\mathcal{D}(A_1^{1/4}),[L^2(\Omega)]^3),$$ 
$$
\mathcal{L}^3(\cdot)\in \mathcal{L}(\mathcal{D}(A_1^{1/2}),[H^{1/2}(\partial\Omega)]^3),
$$
\item there exist non linear functions $\mathcal{N}_E$, $\mathcal{N}_F$,  $\mathcal{N}_G$ and $\mathcal{F}$,
\end{itemize}
 such that the system \eqref{nav3}, \eqref{nav3b}, \eqref{nav3c} is equivalent to
\begin{equation}\label{nav4}
\left\{
\begin{array}{rl}
\partial_t u-\nabla \cdot \mathbb{T}(u,p)+(w^S\cdot\nabla)u+(u\cdot \nabla) w^S -\nabla\cdot\mathcal{L}^1(\xi)+\mathcal{L}^{2}(\xi,\partial_{t}\xi)\\=\nabla\cdot\mathcal{N}_E(\xi)+\mathcal{N}_F(\xi)+\mathcal{F}(u,p,\xi)& \text{in}\ (0,\infty)\times \Omega,\\
\nabla \cdot u=0  & \text{in}\ (0,\infty)\times \Omega,\\
\partial_{tt} \xi+A_1\xi +A_2 \partial_t \xi+\mathcal{T}^*(\mathcal{L}^1(\xi)n)=\mathbb{H}_{\eta^S}(u,p)+H(u,\xi+\eta^S)\\-\mathcal{T}^*(\mathcal{N}_E(\xi)n) &\text{in}\ (0,\infty)\times \omega, \\
\end{array}
\right.
\end{equation}
with the boundary conditions
\begin{equation}\label{nav4b}
\left\{
\begin{array}{rl}
\left[u-\mathcal{T}\partial_t \xi\right]_{n}=1_{\Gamma_0}(\mathbb{M}v)_{n} &  \text{on}\ (0,\infty)\times \partial\Omega,\\
\left[2\nu D(u)n+\mathcal{L}^1(\xi)n+\mathcal{L}^3(\xi)+\beta (u-\mathcal{T}\partial_t \xi)\right]_{\tau}=1_{\Gamma_0}\beta(\mathbb{M} v)_{\tau}\\-[\mathcal{N}_E(\xi)n]_{\tau}+[\mathcal{N}_G(\xi)]_{\tau}+G(u,\xi+\eta^S) &  \text{on}\ (0,\infty)\times \partial\Omega,
\end{array}
\right. 
\end{equation}
and the initial conditions
\begin{equation}\label{nav4c}
\left\{
\begin{array}{rl}
u(0,\cdot )=\widetilde{u}^0-w^S  &\text{in}\ \Omega,\\
\xi(0, \cdot)=\eta^0-\eta^S &\text{in}\  \omega,\\
\partial_t \xi(0,\cdot)=\eta^1&\text{in}\  \omega.\\
\end{array}
\right.
\end{equation}
\end{Lemma}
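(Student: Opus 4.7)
The plan is to produce this decomposition by a systematic Taylor expansion of every $\eta^S$-dependent nonlinearity entering the coefficients of the system, exactly in the spirit of the computation already carried out in \cref{r} for the source term $\det(\nabla X)f^S(X)-f^S$. All coefficient matrices appearing in $\mathbf{K}_{\xi+\eta^S}$, $\mathbf{L}_{\xi+\eta^S}$, $\mathbf{G}_{\xi+\eta^S}$, $\mathbf{M}_{\xi+\eta^S}$, $\mathbf{N}_{\xi+\eta^S}$ and in the boundary operator $\mathcal{W}$, $\mathcal{V}^i$, $G$ (see \eqref{defFS}--\eqref{defGS}, \eqref{tak4.1*}, \eqref{tak4.2S}) depend smoothly on $\xi$ through $X$, $Y = X^{-1}$ and $\Cof\nabla X$. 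Since $X(y)=y+\theta\xi(y_1,y_2)e_3$ with $\theta=y_3/(1+\eta^S)$, each of these coefficient matrices can be written, using Taylor's formula at order one around $\xi=0$, as $\mathrm{coef}(\eta^S) + D_\xi\mathrm{coef}(\eta^S)[\xi,\partial_s\xi,\partial^2_{ss}\xi] + R(\xi)$, where the linear increment depends on $\xi$ and a controlled number of its spatial derivatives with coefficients smooth in $\eta^S$, and the remainder $R$ is quadratic in $\xi$ (and its derivatives) with bounds of the form \eqref{16:33}.

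Substituting these expansions into \eqref{nav3}, \eqref{nav3b}, I would then sort the resulting linear-in-$\xi$ contributions by their structural role. The terms that come from the linearization of the viscous stress $\nu\mathbf{L}_{\xi+\eta^S}(w^S)$ and the pressure correction $\mathbf{G}_{\xi+\eta^S}(p^S)$ naturally assemble into a divergence-form tensor, call it $\mathcal{L}^1(\xi)$, involving up to second derivatives of $\xi$ times regular coefficients (built from $w^S$, $p^S$, $\eta^S$); this is the only block I put in divergence form. Lower-order linear contributions coming from $\mathbf{M}_{\xi+\eta^S}(w^S)$ (which involves $\partial_t\xi$), $\mathbf{N}_{\xi+\eta^S}(w^S)$, the convective perturbation $(w^S\cdot\nabla)w^S$ piece, and $\det(\nabla X)f^S(X)-f^S$ are gathered into $\mathcal{L}^2(\xi,\partial_t\xi)$, which involves only $\xi$, $\partial_s\xi$, $\partial^2_{ss}\xi$ and $\partial_t\xi$ in a non-divergence way. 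The linear part arising in the Navier boundary operator $\mathcal{W}$ and in the passage from $\widetilde\tau^i$ to $\tau^i$ produces the tangential trace contribution $\mathcal{L}^3(\xi)$. The remaining (genuinely quadratic) pieces constitute $\mathcal{N}_E(\xi)$, $\mathcal{N}_F(\xi)$, $\mathcal{N}_G(\xi)$ and the full nonlinearity $\mathcal{F}(u,p,\xi)$.

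The cross-coupling condition that $\mathcal{L}^1(\xi)$ reappears in the plate equation as $\mathcal{T}^*(\mathcal{L}^1(\xi)n)$ (and likewise $\mathcal{N}_E(\xi)$ via $\mathcal{T}^*(\mathcal{N}_E(\xi)n)$) is not an independent choice: it is forced by the identity $\widetilde{\mathbb{H}}_\eta(U,P)=-\sqrt{1+|\nabla\eta|^2}(\mathbb{T}(U,P)\widetilde n\cdot e_3)$ and the definition of $\mathcal{T}^*$, which reads exactly as evaluation of the third component against the unnormalized normal on $\Gamma(\eta^S)$. Concretely, the $H$ term in \eqref{defHS}, being expressed in terms of $U$ and $P$ via the same coefficients that produce $\mathbf{L}_\eta$ and $\mathbf{G}_\eta$ in the bulk, contains precisely the trace of what becomes $-\mathcal{L}^1(\xi)n-\mathcal{N}_E(\xi)n$ on $\Gamma(\eta^S)$. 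Subtracting the linear and quadratic pieces from $H(u+w^S,\xi+\eta^S)$ is what leaves the remainder labelled $H(u,\xi+\eta^S)$ on the right-hand side of the third equation in \eqref{nav4}.

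The mapping properties are then a direct check based on the composition rules for the Sobolev scales $\mathcal{D}(A_1^{s})=H^{4s}(\omega)\cap L^2_0(\omega)$: the entries of $\mathcal{L}^1(\xi)$ involve at worst $\partial^2_{ss}\xi$ multiplied by $W^{1,\infty}$ coefficients, hence $\mathcal{L}^1\colon H^3(\omega)\to H^1(\Omega)^{3\times 3}$ (with $H^3(\omega)=\mathcal{D}(A_1^{3/4})$); $\mathcal{L}^2$ involves at most $\partial^2_{ss}\xi$ in $L^2$ and $\partial_t\xi$ in $L^2$, whence the stated domain $\mathcal{D}(A_1^{1/2})\times\mathcal{D}(A_1^{1/4})$; and $\mathcal{L}^3(\xi)$ lives in $H^{1/2}(\partial\Omega)$ once $\xi\in H^2(\omega)$ by the standard trace theorem, since only first tangential derivatives of $\xi$ enter its coefficients. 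The main obstacle, and where most of the care is needed, is the bookkeeping of the third step: one must verify, by integration by parts in the weak formulation, that the divergence-form piece in the momentum equation together with its boundary trace $\mathcal{L}^1(\xi)n$ and its normal-component contribution $\mathcal{T}^*(\mathcal{L}^1(\xi)n)$ in the plate equation are mutually consistent, so that the reorganized system \eqref{nav4}--\eqref{nav4b} is equivalent to \eqref{nav3}--\eqref{nav3b}, not merely formally derived from it.
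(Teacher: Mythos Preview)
Your overall strategy---Taylor-expand each coefficient at $\xi=0$, collect the linear-in-$\xi$ pieces, and then check that the divergence-form block reappears consistently in the plate equation and in the Navier boundary condition---is exactly what the paper does. But there is one concrete misstep in how you assign the terms.

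You place the linearization of the pressure correction $(\nabla-\mathbf{G}_{\xi+\eta^S})p^S$ inside the divergence-form tensor $\mathcal{L}^1(\xi)$, and justify the plate-equation coupling by saying that $H$ in \eqref{defHS} ``is expressed in terms of $U$ and $P$ via the same coefficients that produce $\mathbf{L}_\eta$ and $\mathbf{G}_\eta$.'' This is not so: look at \eqref{defHS} again---$H$ contains only viscous (velocity-gradient) contributions, no pressure. In the paper the tensor $\mathcal{L}^1(\xi)$ is the linear part of an explicitly constructed tensor $E_{\xi+\eta^S}(w^S)$, built so that (i) $\nabla\cdot E_\eta=-\nu(\Delta-\mathbf{L}_\eta)+\text{lower order}$ in the bulk, and (ii) the exact identity $H(w^S,\xi+\eta^S)=-\mathcal{T}^*\bigl(E_{\xi+\eta^S}(w^S)\,n\bigr)$ holds on $\Gamma(\eta^S)$. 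The same trace $E_{\xi+\eta^S}(w^S)n$ is then identified inside $\mathcal{V}^i(w^S,\xi+\eta^S)$, which is what makes the single tensor $\mathcal{L}^1(\xi)$ appear simultaneously as $\nabla\cdot\mathcal{L}^1(\xi)$ in the momentum equation, as $\mathcal{T}^*(\mathcal{L}^1(\xi)n)$ in the plate equation, and as $[\mathcal{L}^1(\xi)n]_\tau$ in the boundary condition. If you load the pressure piece into $\mathcal{L}^1$, identity (ii) fails and the three occurrences no longer match; the system you write is then not equivalent to \eqref{nav3}--\eqref{nav3b}. The pressure linearization belongs to $\mathcal{L}^2$.

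A second, smaller omission: your description of $\mathcal{L}^2$ lists only $\xi,\partial_s\xi,\partial^2_{ss}\xi,\partial_t\xi$, but the term $\mathbf{M}_{\xi+\eta^S}(w^S)$ also produces $\partial^2_{ts}\xi$ (through $\partial_t a_{ik}(X)$). This is precisely why the second argument of $\mathcal{L}^2$ must lie in $\mathcal{D}(A_1^{1/4})=H^1(\omega)\cap L^2_0(\omega)$ and not merely in $L^2_0(\omega)$; your mapping-property check is therefore incomplete as stated.
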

\begin{proof}
\textbf{Decomposition of $F(u,p,\xi)$:}\\
Let decompose the term $-\nu(\Delta-\mathbf{L}_\eta)$.
We notice that
\begin{equation}
\label{yes1}	
-\nu(\Delta-\mathbf{L}_\eta)(\widetilde{u})=-\nu(\Delta-\mathbf{L}_{\xi+\eta^S})(u+w^S)=-\nu(\Delta-\mathbf{L}_{\xi+\eta^S})(u)-\nu(\Delta-\mathbf{L}_{\xi+\eta^S})(w^S).
\end{equation}
The term $-\nu(\Delta-\mathbf{L}_{\xi+\eta^S})(u)$ is nonlinear whereas $-\nu(\Delta-\mathbf{L}_{\xi+\eta^S})(w^S)$ contains linear terms in $\xi$, then, from \cref{r}, we should write
\begin{equation}
\label{m1}
-\nu(\Delta-\mathbf{L}_{\xi+\eta^S})(w^S)= \ell^{(13)}(\xi,\partial_{s}\xi,\partial^2_{ss}\xi,\partial^3_{sss}\xi)+\epsilon^{(13)}(\xi,\partial_{s}\xi,\partial^2_{ss}\xi,\partial^3_{sss}\xi),
\end{equation}
where $\ell^{(13)}$ is a linear operator of the form \eqref{17:17} and $\epsilon^{(13)}$ is the nonlinear part. To write the linearized system in an appropriate form, we define for all $\widetilde{u}$
\begin{multline}
\label{defE}
(E_\eta(\widetilde{u}))_{im}=
\nu\sum_{j,k,l}\left( \det(\nabla X)a_{ik}(X)\frac{\partial Y_m}{\partial x_j}(X)\frac{\partial Y_l}{\partial x_j}(X)-\delta_{ik}\delta_{mj}\delta_{jl}\right) 
\frac{\partial \widetilde{u}_k}{\partial y_l}
\\
+\nu\sum_{j,k,l}\left( \det(\nabla X)a_{jk}(X)\frac{\partial Y_m}{\partial x_i}(X)\frac{\partial Y_l}{\partial x_j}(X)-\delta_{jk}\delta_{mi}\delta_{jl}\right) 
\frac{\partial \widetilde{u}_k}{\partial y_l}
\\
+\det(\nabla X)\left(\nu\sum_{j,k}\left(\frac{\partial a_{ik}}{\partial x_j}(X)\frac{\partial Y_m}{\partial x_j}(X)+\frac{\partial a_{jk}}{\partial x_i}(X)\frac{\partial Y_m}{\partial x_j}(X)\right) \widetilde{u}_k\right).
\end{multline}
Observe that 
\begin{equation}
\label{t6m}
(\nabla\cdot E_\eta)_i=-\nu[(\Delta-\mathbf{L}_\eta)]_i+(F_\eta^1)_i,
\end{equation}
with
\begin{multline}
[(F_\eta^1)(\widetilde{u})]_i=\nu\frac{\partial\det(\nabla X)}{\partial y_m}\sum_{j,k,l,m}\Bigg[\left( a_{ik}(X)\frac{\partial Y_m}{\partial x_j}(X)\frac{\partial Y_l}{\partial x_j}(X)+a_{jk}(X)\frac{\partial Y_m}{\partial x_i}(X)\frac{\partial Y_l}{\partial x_j}(X)\right) 
\frac{\partial\widetilde{u}_k}{\partial y_l}
\\
+\sum_{j,k}\left(\frac{\partial a_{ik}}{\partial x_j}(X)\frac{\partial Y_m}{\partial x_j}(X)+\frac{\partial a_{jk}}{\partial x_i}(X)\frac{\partial Y_m}{\partial x_j}(X)\right) \widetilde{u}_k
\Bigg]
\\
+\nu\det(\nabla X)\Bigg[
\sum_{j,k,l,m,q}\frac{\partial^2 Y_m}{\partial x_j\partial x_q}(X)\frac{\partial X_q}{\partial y_m}\bigg(\left( a_{ik}(X)\frac{\partial Y_l}{\partial x_j}(X)+a_{jk}(X)\frac{\partial Y_l}{\partial x_j}(X)\right)\frac{\partial \widetilde{u}_k}{\partial y_l} \\+ \left( \frac{\partial a_{ik}}{\partial x_j}(X)+\frac{\partial a_{jk}}{\partial x_i}(X)\right) \widetilde{u}_k\bigg)\Bigg].
\end{multline}
From the \cref{fff1-}, we have
\begin{equation}
\label{E}
(E_{\xi+\eta^S}(w^S))=\mathcal{L}^1(\xi)+\mathcal{N}_E(\xi),
\end{equation}
where $\mathcal{L}^1$ is a linear operator that writes
$$
\mathcal{L}^1(\xi)=Q^1\xi+Q^2\partial_s\xi+Q^3\partial^2_{ss}\xi,
$$
where $Q^i$, $i=1,..,3$ are operators that depend only on the stationary state and are of regularity $W^{1,\infty}(\Omega)$. The quantity $\mathcal{N}_E(\xi)$ is the remaining nonlinear part that contains the terms of the form
\begin{equation}
\label{ter*}
c^1(w^S,\eta^S)\frac{\xi^{m_1}(\partial_s\xi)^{m_2}}{(1+\xi+\eta^S)^{\alpha_1}},\ c^2(w^S,\eta^S)\frac{\xi^{n_1}(\partial_s\xi)^{n_2}\partial_{ss}^2\xi}{(1+\xi+\eta^S)^{\alpha_2}},\quad m_1+m_2\geq 2,\ n_1+n_2\geq 1, \ \alpha_i\geq 1,
\end{equation}
where $c^1(w^S,\eta^S)$ and $c^2(w^S,\eta^S)$ are two quantities of regularity $W^{1,\infty}(\Omega)$.
In the other hand, from \cref{fff1-}, we have also
\begin{equation}
\label{zak1}
(F_{\xi+\eta^S}^1)(w^S)=-\ell^{(9)}(\xi, \partial_s\xi,\partial_{ss}^2\xi)-\epsilon^{(9)}(\xi,\partial_s\xi,\partial_{ss}^2\xi),
\end{equation}
where the expression of the nonlinear part $\epsilon^{(9)}(\xi,\partial_s\xi,\partial_{ss}^2\xi)$ contains terms of the form \eqref{ter*}.
Then, from \eqref{m1}, \eqref{t6m}, \eqref{E} and \eqref{zak1}, we deduce
\begin{equation}
\label{l13}
\ell^{(13)}(\xi,\partial_{s}\xi,\partial^2_{ss}\xi,\partial^3_{sss}\xi)=\nabla\cdot(\mathcal{L}^1(\xi))+\ell^{(9)}(\xi,\partial_s\xi,\partial_{ss}^2\xi),
\end{equation}
\begin{equation}
\label{e13}
\epsilon^{(13)}(\xi,\partial_{s}\xi,\partial^2_{ss}\xi,\partial^3_{sss}\xi)=\epsilon^{(9)}(\xi,\partial_s\xi,\partial_{ss}^2\xi)+\nabla\cdot(\mathcal{N}_E(\xi)).
\end{equation}
Using \eqref{yes1}, \eqref{m1}, \eqref{l13} and \eqref{e13}, the term $-\nu(\Delta-\mathbf{L}_{\xi+\eta^S})(u+w^S)$ writes
\begin{multline}
\label{nicki1}
-\nu(\Delta-\mathbf{L}_{\xi+\eta^S})(u+w^S)=\nabla\cdot(\mathcal{L}^1(\xi))+\ell^{(9)}(\xi,\partial_s\xi,\partial_{ss}^2\xi)
+\epsilon^{(9)}(\xi,\partial_s\xi,\partial_{ss}^2\xi)+\nabla\cdot(\mathcal{N}_E(\xi))\\-\nu(\Delta-\mathbf{L}_{\xi+\eta^S})(u).
\end{multline}
Furthermore, using again \cref{fff1-}, we obtain
\begin{equation}
\label{nicki2}
[(\nabla-\mathbf{G}_{\xi+\eta^S})(p^S)]_i=\sum_k(\delta_{ki}-\det(\nabla X)\frac{\partial Y_k}{\partial x_i}(X))\frac{\partial p^S}{\partial y_k}=\ell^{(5)}_i(\xi,\partial_s\xi)+\epsilon_i^{(5)}(\xi,\partial_s\xi),
\end{equation}
where $\epsilon^{(5)}(\xi,\partial_s\xi)$ contains the terms 
\begin{equation}
\label{ep2*}
c^3(p^S,\eta^S)\frac{\xi^{m_2}(\partial_s\xi)^{m_2}}{(1+\xi+\eta^S)^{\alpha_1}},\quad m_1+m_2\geq 2,\ \alpha_1\geq 1,
\end{equation}
where $c^3$ is a function that depends on the stationary state and bounded in $\Omega$.
Then, we get
\begin{equation}
\label{18:42}
[(\nabla-\mathbf{G}_{\xi+\eta^S})(p+p^S)]_i=\ell_i^{(5)}(\xi,\partial_s\xi)+\epsilon_i^{(5)}(\xi,\partial_s\xi)+[(\nabla-\mathbf{G}_{\xi+\eta^S})(p)]_i.
\end{equation}
Moreover, we have
$$
\sum_{l,k}\det(\nabla X)a_{ik}(X)\frac{\partial w^S_k}{\partial y_l}\partial_tY_l(X)=(\nabla X\nabla w^S\partial_tY(X))_i,
$$
where
\begin{multline*}
(\nabla X\nabla w^S\partial_tY(X))_i=\partial_tY_3\frac{\partial w^S_i}{\partial y_3}, \ i=1,2,\\ (\nabla X\nabla w^S\partial_tY(X))_3=\partial_tY_3\left( \frac{\partial X_3}{\partial y_1}\frac{\partial w^S_1}{\partial y_3}+ \frac{\partial X_3}{\partial y_2}\frac{\partial w^S_2}{\partial y_3}+\frac{1+\eta}{1+\eta^S}\frac{\partial w^S_3}{\partial y_3}\right).
\end{multline*}
We notice that
$$
\partial_tY_3=-y_3\frac{\partial_t\xi}{1+\eta},\quad \frac{\partial X_3}{\partial y_i}=y_3\frac{\partial_{s_i}\xi(1+\eta^S)-\partial_{s_i}\eta^S\xi}{(1+\eta^S)^2}, \ i=1,2.
$$
Using \eqref{17:37}, we find
$$
-(\nabla X\nabla w^S\partial_tY(X))_i=\ell_i^{(4)}(\partial_t\xi)+\epsilon_i^{(4)}(\xi,\partial_s\xi,\partial_t\xi),
$$
where $\epsilon^{(4)}(\xi,\partial_s\xi,\partial_t\xi)$ has the terms
\begin{equation}
c^4(w^S,\eta^S)\frac{ \xi^{m_1}\partial_t\xi\partial_s\xi}{(1+\xi+\eta^S)^{\alpha_1}},\ c^5(w^S,\eta^S)\frac{\xi^{m_2}\partial_t\xi}{(1+\xi+\eta^S)^{\alpha_2}},\quad m_1\geq 0,\quad m_2\geq 1, \ \alpha_i\geq 1,
\end{equation}
with $c^4$ and $c^5$ are $W^{1,\infty}(\Omega)$ functions. Moreover, using \cref{fff1-}, we get
\begin{equation}
-\sum_k\det(\nabla X)\partial_t a_{ik}(X)w^S_k=\ell_i^{(6)}(\partial_t\xi,\partial^2_{ts}\xi)+\epsilon_i^{(6)}(\xi,\partial_t\xi,\partial^2_{ts}\xi),
\end{equation}
where $\epsilon^{(6)}(\xi,\partial_t\xi,\partial^2_{ts}\xi)$ contains terms of the form
\begin{multline}
c^6(w^S,\eta^S)\frac{\xi^{m_1}\partial^2_{ts}\xi}{(1+\xi+\eta^S)^{\alpha_1}},\ c^7(w^S,\eta^S)\frac{\xi^{m_2}\partial_t\xi}{(1+\xi+\eta^S)^{\alpha_2}} ,\  c^8(w^S,\eta^S)\frac{\xi^{m_3}\partial_t\xi\partial_s\xi}{(1+\xi+\eta^S)^{\alpha_3}},\\ m_1\geq 1,\ m_2 \geq 1,\ m_3\geq 0,\ \alpha_i\geq 1,
\end{multline}
where $c^i$, $i=6,7,8$ are $W^{2,\infty}(\Omega)$ functions.
Then
\begin{equation}
\label{nicki3}
\mathbf{M}_{\xi+\eta^S}(u+w^S)=\ell^{(4)}(\partial_t\xi)+\ell^{(6)}(\partial_t\xi,\partial^2_{ts}\xi)+\epsilon^{(6)}(\xi,\partial_t\xi,\partial^2_{ts}\xi)+\epsilon^{(4)}(\xi,\partial_s\xi,\partial_t\xi)+\mathbf{M}_{\xi+\eta^S}(u).
\end{equation}
We have, 
\begin{multline}
[\mathbf{N}_{\xi+\eta^S}(u+w^S)+u\cdot\nabla w^S+w^S\cdot\nabla u+w^S\cdot\nabla w^S]_i=-\sum_{k,l,j}\det(\nabla X)a_{kl}(X)\frac{\partial a_{ij}(X)}{\partial x_k}w^S_lw^S_j
\\
+\sum_{k,l,j,m}\left( \delta_{ij}\delta_{kl}\delta_{km}-\det(\nabla X)a_{kl}(X)a_{ij}(X)\frac{\partial Y_m}{\partial x_k}(X)\right)w^S_l\frac{\partial w^S_j}{\partial y_m}+F^2_i(u,\xi+\eta^S)\\=F^2_i(u,\xi+\eta^S)+F_i^3(w^S,\xi+\eta^S)
,\quad i=1,2,3.
\end{multline}
Using \cref{fff1-}, we obtain
\begin{equation}
F^3_i(w^S,\xi+\eta^S)=\ell_i^{(8)}(\xi,\partial_s\xi,\partial^2_{ss}\xi)+\epsilon_i^{(8)}(\xi,\partial_s\xi,\partial^2_{ss}\xi),
\end{equation}
where $\epsilon^{(8)}(\xi,\partial_s\xi,\partial^2_{ss}\xi)$ admits terms of the form \eqref{ter*}.
Then, we get 
\begin{multline}
\label{nicki4}
[\mathbf{N}_{\xi+\eta^S}(u+w^S)+u\cdot\nabla w^S+w^S\cdot\nabla u+w^S\cdot\nabla w^S]_i=\ell_i^{(8)}(\xi,\partial_s\xi,\partial^2_{ss}\xi)\\+\epsilon_i^{(8)}(\xi,\partial_s\xi,\partial^2_{ss}\xi)+F^2_i(u,\xi+\eta^S),\quad i=1,2,3.
\end{multline}
From \eqref{defFbar}, \eqref{nicki1}, \eqref{18:42}, \eqref{nicki3} and \eqref{nicki4}, we get
\begin{equation}
F(u,p,\xi)=\nabla\cdot\mathcal{L}^{1}(\xi)+\mathcal{L}^2(\xi,\partial_t\xi)+\nabla\cdot\mathcal{N}_E(\xi)+\mathcal{N}_F(\xi)+\mathcal{F}(u,p,\xi).
\end{equation}
where
\begin{equation}
\mathcal{L}^2(\xi,\partial_t\xi)=\ell^{(9)}(\xi,\partial_s\xi,\partial^2_{ss}\xi)+\ell^{(5)}(\xi,\partial_s\xi)+\ell^{(4)}(\partial_t\xi)+\ell^{(6)}(\partial_t\xi,\partial^2_{ts}\xi)+\ell^{(8)}(\xi,\partial_s\xi,\partial^2_{ss}\xi).
\end{equation}
More precisely, 
$$
\mathcal{L}^2(\xi,\partial_t\xi)=Q^4\xi+Q^5\partial_s\xi+Q^6\partial^2_{ss}\xi+Q^7\partial_t\xi+Q^8\partial^2_{st}\xi,
$$
where $Q_i$, $i=4,..,8$ are operators of regularity $W^{1,\infty}(\Omega)$ and we also have
\begin{multline}
\label{nf}
\mathcal{N}_F(\xi)=\epsilon^{(9)}(\xi,\partial_s\xi,\partial^2_{ss}\xi)+\epsilon^{(5)}(\xi,\partial_s\xi)+\epsilon^{(4)}(\xi,\partial_s\xi,\partial_t\xi)+\epsilon^{(6)}(\xi,\partial_t\xi,\partial^2_{ts}\xi)+\epsilon^{(8)}(\xi,\partial_s\xi,\partial^2_{ss}\xi),
\end{multline}
\begin{multline}
\label{fcall}
\mathcal{F}(u,p,\xi)=(\Id-\mathbf{K}_{\xi+\eta^S})\partial_t u-\nu(\Delta-\mathbf{L}_{\xi+\eta^S})(u)+(\nabla-\mathbf{G}_{\xi+\eta^S})(p)+\mathbf{M}_{\xi+\eta^S}(u)+F^2(u,\xi+\eta^S).
\end{multline}

\textbf{Decomposition of $H(u,\xi+\eta^S)$:}\\
We have
$$
H(u+w^S,\xi+\eta^S)=H(u,\xi+\eta^S)+H(w^S,\xi+\eta^S).
$$
We notice that
\begin{equation}
H(w^S,\xi+\eta^S)=-\mathcal{T}^*((E_{\xi+\eta^S}(w^S))n),
\end{equation}
where $\mathcal{T}^*$ is defined by
\begin{equation}
\label{t*}
\mathcal{T}^*(\zeta)(s)=M\left( \sqrt{1+\left|\nabla \eta^S\right|^2 }\zeta(s,1+\eta^S(s))\cdot e_3\right) ,\quad \forall \zeta\in [L^2(\partial\Omega)]^3.
\end{equation}
Using \eqref{E}, we obtain 
\begin{equation}
H(u+w^S,\xi+\eta^S)=-\mathcal{T}^*(\mathcal{L}^1(\xi)n)+H(u,\xi+\eta^S)-\mathcal{T}^*(\mathcal{N}_E(\xi)n).
\end{equation}

\textbf{Decomposition of $G(u,\xi+\eta^S)$:}\\
Using the expression of $\mathcal{W}$ in \eqref{tak4.1*} and of $\mathcal{V}^i$ in \eqref{tak4.2S}, we find
\begin{multline*}
\mathcal{V}^i(w^S,\xi+\eta^S)=
\left( 2\nu D(w^S)n+\beta w^S\right)\cdot(\tau^i-\widetilde{\tau}^i(X))
\\+\Bigg[ \nu\sum_{j,m,q} \left( (n)_j\delta_{km}\delta_{qj}
-(\widetilde{n}(X))_ja_{km}(X)\frac{\partial Y_q}{\partial x_j}(X)\right)\frac{\partial w^S_m}{\partial y_q}
\\
-\nu\sum_{j,m,q} \left( 
(\widetilde{n}(X))_ja_{jm}(X)\frac{\partial Y_q}{\partial x_k}(X)-(n)_j\delta_{jm}\delta_{qk}\right)\frac{\partial w^S_m}{\partial y_q}
\\
- \nu\sum_{j,m} (\widetilde{n}(X))_j\left( \frac{\partial a_{km}}{\partial x_j}(X)w^S_m+\frac{\partial a_{jm}}{\partial x_k}(X)w^S_m\right)
+\beta \sum_j (\delta_{kj}-a_{kj}(X)) w^S_j\Bigg]\widetilde{\tau}^i_k(X),
\quad k=1,2,3.
\end{multline*}
We notice that
\begin{multline}
\nu\sum_{j,m,q} \left( n_j\delta_{km}\delta_{qj}
-\widetilde{n}_j(X)a_{km}(X)\frac{\partial Y_q}{\partial x_j}(X)\right)\frac{\partial w^S_m}{\partial y_q}
\\
+\nu\sum_{j,m,q} \left(n_j\delta_{jm}\delta_{qk}- 
\widetilde{n}_j(X)a_{jm}(X)\frac{\partial Y_q}{\partial x_k}(X)\right)\frac{\partial w^S_m}{\partial y_q}
\\
- \nu\sum_{j,m} \widetilde{n}_j(X)\left( \frac{\partial a_{km}}{\partial x_j}(X)w^S_m+\frac{\partial a_{jm}}{\partial x_k}(X)w^S_m\right)
=-\frac{|N|}{|\widetilde{N}|}\left[(E_{\xi+\eta^S}(w^S))n\right]_k\\+ \nu\sum_{j} \left( (n)_j
-\frac{N_j}{|\widetilde{N}|}\right)\frac{\partial w^S_k}{\partial y_j}
+\nu\sum_{j} \left( (n)_j
-\frac{N_j}{|\widetilde{N}|}\right)\frac{\partial w^S_j}{\partial y_k}. 
\end{multline}
In the other hand, using Taylor's formula
\begin{equation*}
\frac{1}{|\widetilde{N}|}=\frac{1}{|N|}-\frac{\nabla\eta^S\cdot\nabla\xi}{|N|^3}-\frac{|\nabla \xi|^2}{2|N|^3}+\frac{3\theta^2}{4|N|}\int_0^1\frac{(1-s)}{(1+s\theta)^{5/2}}\ ds,
\end{equation*}
where $\theta=\dfrac{2\nabla\eta^S\cdot\nabla\xi+|\nabla \xi|^2}{|N|^2}$. Then, we deduce

\begin{equation}
\label{rec}
\frac{1}{|\widetilde{N}|}=\frac{1}{|N|}-\frac{\nabla\eta^S\cdot\nabla\xi}{|N|^3}+\epsilon^{(10)}(\partial_s\xi),
\end{equation}
with
$$
\epsilon^{(10)}(\partial_s\xi)=-\frac{|\nabla \xi|^2}{2|N|^3}+\frac{3\theta^2}{4|N|}\int_0^1\frac{(1-s)}{(1+s\theta)^{5/2}}\ ds.
$$
Then, using \eqref{E} and \eqref{rec}, we obtain
\begin{multline}
\label{11:13}
\nu\sum_{j,m,q} \left( n_j\delta_{km}\delta_{qj}
-\widetilde{n}_j(X)a_{km}(X)\frac{\partial Y_q}{\partial x_j}(X)\right)\frac{\partial w^S_m}{\partial y_q}
\\
+\nu\sum_{j,m,q} \left(n_j\delta_{jm}\delta_{qk}- 
\widetilde{n}_j(X)a_{jm}(X)\frac{\partial Y_q}{\partial x_k}(X)\right)\frac{\partial w^S_m}{\partial y_q}
\\
- \nu\sum_{j,m} \widetilde{n}_j(X)\left( \frac{\partial a_{km}}{\partial x_j}(X)w^S_m+\frac{\partial a_{jm}}{\partial x_k}(X)w^S_m\right)
=[-\mathcal{L}^1(\xi)n+\ell^{(14)}(\partial_s\xi)-\mathcal{N}_E(\xi)n\\+\epsilon^{(14)}(\xi,\partial_s\xi,\partial^2_{ss}\xi)]_k,
\end{multline}
where
$$
\ell_k^{(14)}(\partial_s\xi)=\nu\frac{\nabla\eta^S\cdot\nabla\xi}{|N|^2}\sum_{j}\left(\frac{\partial w^S_k}{\partial y_j}+\frac{\partial w^S_j}{\partial y_k} \right), 
$$
and $\epsilon^{(14)}$ admits the terms \eqref{ter*}. Moreover, From \cref{fff1-}, we get
$$
\beta \sum_j (\delta_{kj}-a_{kj}(X)) w^S_j=\ell_{k}^{(15)}(\xi,\partial_{s}\xi)+\epsilon_{k}^{(15)}(\xi,\partial_{s}\xi),
$$
where $\epsilon_{k}^{(15)}$ had the terms
\begin{equation*}
c^9(w^S,\eta^S)\frac{\xi^{m_1}}{(1+\xi+\eta^S)^{\alpha_1}},\quad c^{(10)}(w^S,\eta^S)\frac{\xi ^{m_2}\partial_s\xi}{(1+\xi+\eta^S)^{\alpha_2}}, \ m_1\geq 2,\ m_2 \geq 1,\ \alpha_i\geq 1,\ i=1,2,
\end{equation*}
where $c^i$, $i=9,10$ are of regularity $W^{3/2,\infty}(\partial\Omega)$.
\begin{equation}
\mathcal{V}^i(w^S,\eta)=-[\mathcal{L}^1(\xi)n+\mathcal{L}^3(\xi)]\cdot\tau^i-[\mathcal{N}_E(\xi)n]\cdot \tau^i+\epsilon_i^{(11)}(\xi,\partial_s\xi,\partial^2_{ss}\xi),
\end{equation}
where  $\mathcal{L}^3$ is an operator defined by
\begin{equation*}
\mathcal{L}^3(\xi)\cdot \tau^i=-\left( 2\nu D(w^S)n+\beta w^S\right)\cdot(\tau^i-\widetilde{\tau}^i(X))-\ell^{(14)}(\partial_s\xi)\cdot \tau^i,\quad \mathcal{L}^3(\xi)\cdot n=0,
\end{equation*}
then $\mathcal{L}^3$ can be expressed as the following 
\begin{equation}
\label{l3*}
\mathcal{L}^3(\xi)=Q^9\xi+Q^{10}\partial_s\xi,
\end{equation}
where $Q^9$ and $Q^{10}$ are of regularity $W^{1/2,\infty}(\partial\Omega)$ and	
\begin{multline*}
\epsilon^{(11)}(\xi,\partial_s\xi,\partial^2_{ss}\xi)=-\ell^{(14)}(\partial_s\xi)\cdot (\widetilde{ \tau}^i(X)-\tau^i)+\epsilon^{(14)}(\xi,\partial_s\xi,\partial^2_{ss}\xi)\cdot \widetilde{ \tau}^i(X)-[\mathcal{N}_E(\xi)n]\cdot(\widetilde{ \tau}^i(X)- \tau^i).
\end{multline*}
Here, $\epsilon^{(11)}$ contains terms of the type \eqref{ter*}. Finally, we define $\mathcal{N}_G$ such that
$$\mathcal{N}_G(\xi)\cdot \tau^i=\epsilon_i^{(11)}(\xi,\partial_s\xi,\partial^2_{ss}\xi),\quad \mathcal{N}_G(\xi)\cdot n=0.$$

Then, the system \eqref{nav3}, \eqref{nav3b}, \eqref{nav3c} becomes
\begin{equation*}\label{nav4*}
\left\{
\begin{array}{rl}
\partial_t u-\nabla \cdot \mathbb{T}(u,p)+(w^S\cdot\nabla)u+(u\cdot \nabla) w^S -\nabla\cdot\mathcal{L}^1(\xi)+\mathcal{L}^{2}(\xi,\partial_{t}\xi)\\=\nabla\cdot\mathcal{N}_E(\xi)+\mathcal{N}_F(\xi)+\mathcal{F}(u,p,\xi)& \text{in}\ (0,\infty)\times \Omega,\\
\nabla \cdot u=0  & \text{in}\ (0,\infty)\times \Omega,\\
\partial_{tt} \xi+A_1\xi +A_2 \partial_t \xi+\mathcal{T}^*(\mathcal{L}^1(\xi)n)=\mathbb{H}_{\eta^S}(u,p)+H(u,\xi+\eta^S)\\-\mathcal{T}^*(\mathcal{N}_E(\xi)n) &\text{in}\ (0,\infty)\times \omega, \\
\end{array}
\right.
\end{equation*}
with the boundary conditions
\begin{equation*}\label{nav4b*}
\left\{
\begin{array}{rl}
\left[u-\mathcal{T}\partial_t \xi\right]_{n}=1_{\Gamma_0}(\mathbb{M}v)_{n} &  \text{on}\ (0,\infty)\times \partial\Omega,\\
\left[2\nu D(u)n+\mathcal{L}^1(\xi)n+\mathcal{L}^3(\xi)+\beta (u-\mathcal{T}\partial_t \xi)\right]_{\tau}=1_{\Gamma_0}\beta(\mathbb{M} v)_{\tau}\\-[\mathcal{N}_E(\xi)n]_{\tau}+[\mathcal{N}_G(\xi)]_{\tau}+G(u,\xi+\eta^S) &  \text{on}\ (0,\infty)\times \partial\Omega,
\end{array}
\right. 
\end{equation*}
and the initial conditions
\begin{equation*}\label{nav4c*}
\left\{
\begin{array}{rl}
u(0,\cdot )=\widetilde{u}^0-w^S  &\text{in}\ \Omega,\\
\xi(0, \cdot)=\eta^0-\eta^S &\text{in}\  \omega,\\
\partial_t \xi(0,\cdot)=\eta^1&\text{in}\  \omega.\\
\end{array}
\right.
\end{equation*}
\end{proof}
\section{Feedback stabilization of the linear system}
\label{sl}
We consider the linear system associated to \eqref{nav3} and \eqref{nav3b} 
\begin{equation}
\label{oseenl}
\left\{
\begin{array}{rl}
\partial_t \widetilde{w}+(w^S\cdot \nabla)\widetilde{w}+(\widetilde{w}\cdot \nabla)w^S-\nabla\cdot\mathbb{T}(\widetilde{w},\widetilde{q})-\nabla\cdot\mathcal{L}^1(\xi)+\mathcal{L}^2(\xi,\partial_t\xi)=\widetilde{f}& \text{in} \ (0,\infty)\times \Omega,\\
\nabla \cdot \widetilde{w}=0& \text{in} \ (0,\infty)\times \Omega,\\
\partial_{tt} \xi+A_1\xi+A_2 \partial_t \xi+\mathcal{T}^*(\mathcal{L}^1(\xi)n)=-\mathcal{T}^*(\mathbb{T}(\widetilde{w},\widetilde{q})n)+\widetilde{h} & \text{in} \ (0,\infty)\times \omega,
\end{array}
\right.
\end{equation}
with the boundary conditions
\begin{equation}
\label{oseenbl}
\left\{
\begin{array}{rl}
[\widetilde{w}-\mathcal{T}\partial_t \xi]_{n}=1_{\Gamma_0}(\mathbb{M}v)_{n} &  \text{on} \ (0,\infty)\times\partial\Omega,\\
\left[ 2\nu D(\widetilde{w}) n + \beta \left(\widetilde{w}-\mathcal{T}\partial_t \xi\right)+\mathcal{L}^1(\xi)n+\mathcal{L}^3(\xi)\right]_{\tau}=1_{\Gamma_0}\beta(\mathbb{M} v)_{\tau}+\widetilde{g} &\text{on} \ (0,\infty)\times\partial\Omega,
\end{array}
\right.
\end{equation}
and the initial conditions
\begin{equation}\label{nav5c}
\left\{
\begin{array}{rl}
\widetilde{w}(0,\cdot )=w^0=\widetilde{u}^0-w^S  &\text{in}\ \Omega,\\
\xi(0, \cdot)=\xi^0=\eta^0-\eta^S &\text{in}\  \omega,\\
\partial_t \xi(0,\cdot)=\xi^1=\eta^1&\text{in}\  \omega,\\
\end{array}
\right.
\end{equation}
where
\begin{multline}
\label{s11}
\mathcal{L}^1(\xi)=Q^1\xi+Q^2\partial_s\xi+Q^3\partial^2_{ss}\xi,\\ \mathcal{L}^2(\xi,\partial_t\xi)= Q^4\xi+Q^5\partial_s\xi+Q^6\partial^2_{ss}\xi+Q^7\partial_t\xi+Q^8\partial^2_{st}\xi,\\	\mathcal{L}^3(\xi)=Q^9\xi+Q^{10}\partial_s\xi.
\end{multline}
We set 
\begin{equation}
\label{s6}
\mathcal{L}^{2,1}(\xi)=Q^4\xi+Q^5\partial_s\xi+Q^6\partial_{ss}^2\xi,\quad \mathcal{L}^{2,2}(\partial_t\xi)=Q^7\partial_t\xi+Q^8\partial^2_{ts}\xi.
\end{equation}
Here, we recall that $Q^i$, $i=1,...,8$ are some operators that depend on the stationary state, of regularity $W^{1,\infty}(\Omega)$ and $Q^9,\ Q^{10}$ are of regularity $W^{1/2,\infty}(\partial\Omega)$.

The next step consists to reformulate the system \eqref{oseenl}, \eqref{oseenbl} to get an evolution problem. To do so, we need to lift $\widetilde{g}$ in the boundary conditions. Then, we set the following lemma that concerns the instationary Stokes problem with Navier boundary conditions with a non negative friction.  
\begin{Lemma}
\label{lem}
Let $\beta \geq 0$ and $\gamma_0>0$. Let $(\widetilde{v},\widetilde{\pi})$ that verifies the system
\begin{equation}
\label{sys3*}
\left\{
\begin{array}{rl}
\partial_t \widetilde{v}+\gamma_0\widetilde{v}-\nabla \cdot \mathbb{T}(\widetilde{v},\widetilde{\pi})=0 &\quad \text{in }(0,\infty) \times \Omega,\\
\nabla \cdot \widetilde{v}=0 &\quad \text{in }(0,\infty) \times \Omega,\\
\widetilde{v}_{n}=0 & \quad \text{on }(0,\infty) \times \partial\Omega,\\
\left[2\nu D(\widetilde{v})n+\beta \widetilde{v}\right]_{\tau}=\widetilde{g} &\quad\text{on }(0,\infty) \times \partial\Omega,\\
\widetilde{v}(0,\cdot)=0 &\quad \text{in }\Omega.
\end{array}
\right.
\end{equation}
If
\begin{equation}
\label{LM*}
\widetilde{g}\in W_{\gamma}^{1/4}(0,\infty;[H^{1/2}(\partial\Omega)]^3,[L^2(\partial\Omega)]^3),\quad \widetilde{g}_{n_0}=0,
\end{equation}
for $\gamma\in [0,\gamma_0[$, then the problem \eqref{sys3*} admits a unique solution that satisfies the estimate
\begin{equation}
\label{estim*}
\left\| \widetilde{v}\right\|_{W_{\gamma}(0,\infty,[H^2(\Omega)]^3,[L^2(\Omega)]^3)}^2+\left\|  \nabla\widetilde{\pi}\right\|_{L^2_{\gamma}(0,\infty;[L^2(\Omega)]^3)}^2 
\\
\leq 
C
\left\| \widetilde{g}\right\|_{W_{\gamma}^{1/4}(0,\infty;[H^{1/2}(\partial\Omega)]^3,[L^2(\partial\Omega)]^3)}^2,
\end{equation}
where $C$ is a positive constant.
\end{Lemma}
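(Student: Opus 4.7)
The plan is to reduce to homogeneous boundary data and then apply maximal $L^2$-regularity for the Stokes operator with Navier slip conditions. First I would exploit the exponential weight by setting $\widetilde{v}_\gamma(t)=e^{\gamma t}\widetilde{v}(t)$, $\widetilde{\pi}_\gamma(t)=e^{\gamma t}\widetilde{\pi}(t)$, which transforms \eqref{sys3*} into the same system with $\gamma_0$ replaced by $\gamma_0-\gamma>0$ and boundary data $\widetilde{g}_\gamma\in W^{1/4}(0,\infty;[H^{1/2}(\partial\Omega)]^3,[L^2(\partial\Omega)]^3)$. Since the shift remains strictly positive, it is enough to prove \eqref{estim*} in the unweighted spaces, with $\gamma_0>0$ playing the role of a coercive lower order term.

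Next I would lift the boundary data. The goal is to construct $(\widetilde{V},\widetilde{\Pi})\in W(0,\infty;[H^2(\Omega)]^3,[L^2(\Omega)]^3)\times L^2(0,\infty;H^1(\Omega)/\mathbb{R})$ with $\nabla\cdot\widetilde{V}=0$, $\widetilde{V}_n=0$ and $[2\nu D(\widetilde{V})n+\beta\widetilde{V}]_\tau=\widetilde{g}$ on $\partial\Omega$, together with a norm estimate by $\|\widetilde{g}\|_{W^{1/4}(0,\infty;[H^{1/2}(\partial\Omega)]^3,[L^2(\partial\Omega)]^3)}$. The candidate is obtained by solving, at each time $t$, the stationary Stokes-Navier problem with tangential stress $\widetilde{g}(t)$, which is well-posed thanks to the elliptic theory of \cite{BdV}; the compatibility $\widetilde{g}_n=0$ is precisely what makes the tangential condition compatible with the impermeability condition $\widetilde{V}_n=0$ and with the divergence-free constraint. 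Standard anisotropic trace theory (or equivalently interpolation between $L^2(0,\infty;H^2)$ and $H^1(0,\infty;L^2)$) identifies the trace space of the tangential Cauchy stress as exactly $W^{1/4}(0,\infty;[H^{1/2}(\partial\Omega)]^3,[L^2(\partial\Omega)]^3)$, so that this lifting map is bounded with the claimed norm.

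Subtracting $(\widetilde{V},\widetilde{\Pi})$ reduces \eqref{sys3*} to a system for $(w,\pi):=(\widetilde{v}-\widetilde{V},\widetilde{\pi}-\widetilde{\Pi})$ with homogeneous Navier boundary conditions, zero initial data, and right-hand side $F=-\partial_t\widetilde{V}-\gamma_0\widetilde{V}+\nabla\cdot\mathbb{T}(\widetilde{V},\widetilde{\Pi})\in L^2(0,\infty;[L^2(\Omega)]^3)$ controlled by the $W^{1/4}$-norm of $\widetilde{g}$. On the space $\{v\in [L^2(\Omega)]^3;\ \nabla\cdot v=0,\ v_n=0\}$ the Stokes operator $\mathcal{A}_\beta$ with homogeneous Navier conditions is self-adjoint, negative and has compact resolvent (using Korn's inequality together with \cite{BdV}); hence $-\mathcal{A}_\beta+\gamma_0 I$ generates an exponentially decaying analytic semigroup. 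By de Simon's theorem on maximal $L^2$-regularity in Hilbert spaces one gets
\begin{equation*}
\|w\|_{W(0,\infty;[H^2(\Omega)]^3,[L^2(\Omega)]^3)}+\|\nabla\pi\|_{L^2(0,\infty;[L^2(\Omega)]^3)}\le C\|F\|_{L^2(0,\infty;[L^2(\Omega)]^3)}.
\end{equation*}
Combining with the bound on $(\widetilde{V},\widetilde{\Pi})$ and undoing the weight yields \eqref{estim*}, while uniqueness follows from the coercivity of $-\mathcal{A}_\beta+\gamma_0 I$.

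The main obstacle is the construction of the lift with the sharp anisotropic regularity: identifying the tangential Cauchy stress trace space as $W^{1/4}(0,\infty;[H^{1/2}(\partial\Omega)]^3,[L^2(\partial\Omega)]^3)$ in the vector-valued Stokes setting requires a careful decoupling of the normal and tangential components so that the divergence-free and impermeability constraints are preserved simultaneously; the compatibility $\widetilde{g}\cdot n=0$ is essential at this step, as is the elliptic regularity of the stationary Navier-Stokes-type problem proved in \cite{BdV}, which provides the spatial regularity of the lift uniformly in time.
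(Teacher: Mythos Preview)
Your overall strategy---reduce to unweighted norms, lift the tangential stress, apply maximal $L^2$-regularity for the Stokes--Navier semigroup---is the same skeleton the paper uses for the existence part, and the exponential-weight reduction is fine. The gap is in the lifting step, and it is exactly the point you flag as ``the main obstacle.''

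Your candidate lift solves, at each fixed $t$, the stationary Stokes problem with tangential stress $\widetilde g(t)$; call this map $S$, so $\widetilde V(t)=S\widetilde g(t)$. Elliptic theory gives $S:H^{1/2}(\partial\Omega)\to H^2(\Omega)$, hence $\widetilde V\in L^2(0,\infty;H^2)$. But $S$ is a \emph{fixed} bounded operator, so the only time regularity $\widetilde V$ inherits is that of $\widetilde g$, namely $H^{1/4}$ in time. This does \emph{not} yield $\partial_t\widetilde V\in L^2(0,\infty;L^2)$, and therefore your source term $F=-\partial_t\widetilde V$ (note that $\gamma_0\widetilde V-\nabla\cdot\mathbb T(\widetilde V,\widetilde\Pi)=0$ by construction) is not in $L^2(0,\infty;L^2)$, so the maximal regularity step cannot be invoked. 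Anisotropic trace theory tells you the \emph{range} of the tangential Cauchy stress map on $W(0,\infty;H^2,L^2)$ is $W^{1/4}(0,\infty;H^{1/2},L^2)$, but it does not say that your particular right inverse $S$ lands back in $W$; a right inverse with the correct anisotropic regularity must mix time and space, and producing one that is simultaneously divergence-free with $\widetilde V_n=0$ is precisely the nontrivial issue.

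The paper circumvents this entirely. It uses the stationary lift only to obtain existence for smooth data by density, and then derives the estimate \eqref{estim*} by a transposition (duality) argument: one pairs $e^{\gamma t}\widetilde v$ and $e^{\gamma t}\partial_t\widetilde v$ against the solution $(\psi,\theta)$ of a backward Stokes problem with source $\phi\in L^2(L^2)$, integrates by parts, and controls the resulting boundary terms using the $W^{1/2}(0,\infty;H^1(\Omega),L^2(\Omega))$ regularity of the (extended) data $\widetilde g$ together with the $W(0,\infty;H^2,L^2)$ regularity of $\psi$. The fractional time regularity is used only inside a bilinear pairing (via \cite{MR2384339}), never to put $\partial_t z$ in $L^2(L^2)$. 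Spatial $H^2$ regularity and the pressure bound then follow a posteriori from elliptic Stokes estimates once $\partial_t\widetilde v\in L^2(L^2)$ is known. If you want to salvage the direct route, you would need to construct a genuinely anisotropic lift (not pointwise-in-time), which is essentially as hard as the lemma itself.
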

\begin{Remark}
Since $\gamma_0>0$, then the system \eqref{sys3*} admits a unique solution for any $\beta\geq 0$ that is exponentially stable. If $\gamma_0=\beta=0$ the system is no more stable hence the importance of taking $\gamma_0>0$.
\end{Remark}
\begin{proof}
In the proof, we use the fact that the condition \eqref{LM*} is equivalent to
$$
\widetilde{g}\in W_{\gamma}^{1/2}(0,\infty;[H^{1}(\Omega)]^3,[L^2(\Omega)]^3),\quad \widetilde{g}_{n_0}=0.
$$ 
This fact is obtained in \cite[p.21, Theorem 2.3]{Lions}. 
This type of demonstration can be found in \cite{RS}, for the sake of completeness, we recall the main steps of the proof here and see how we can adapt some points of the proof in our case.
	
We consider the following problem
\begin{equation}\label{c1}
\left\{
\begin{array}{rlc}
\gamma_0z-\nabla\cdot \mathbb{T}(z,\chi)=0  & \quad \text{in }(0,\infty) \times \Omega,\\
\nabla \cdot z=0  &  \quad \text{in }(0,\infty) \times \Omega,\\
z_{n}=0 &\quad \text{on }(0,\infty) \times \partial\Omega,\\
\left[ 2\nu D(z)n +\beta z\right]_{\tau} =\widetilde{g} &\quad \text{on }(0,\infty) \times \partial \Omega.
\end{array}
\right.
\end{equation}
By a density argument, we assume that $\widetilde{g}\in C_0^{\infty}(\mathbb{R}_+,[H^1(\Omega)]^3)$ and $\widetilde{g}(t)=0$ if $t<0$. Then, the system \eqref{c1}, admits a unique solution $(z,\chi)\in C_0^{\infty}(\mathbb{R}_+,[H^2(\Omega)]^3)\times C_0^{\infty}(\mathbb{R}_+,H^1(\Omega)/\mathbb{R})$ (see \cite{BdV}). We set then, $\widetilde{z}=\widetilde{v}-z$, $\widetilde{\chi}=\widetilde{\pi}-\chi$. Then, $(\widetilde{z},\widetilde{\chi})$ satisfies the system
\begin{equation}\label{c2}
\left\{
\begin{array}{rlc}
\partial_t\widetilde{z}+\gamma_0\widetilde{z}-\nabla\cdot \mathbb{T}(\widetilde{z},\widetilde{\chi})=-\partial_tz  & \quad \text{in }(0,\infty) \times \Omega,\\
\nabla \cdot \widetilde{z}=0  &  \quad \text{in }(0,\infty) \times \Omega,\\
\widetilde{z}_{n}=0 &\quad \text{on }(0,\infty) \times \partial\Omega,\\
\left[ 2\nu D(\widetilde{z})n +\beta \widetilde{z}\right]_{\tau} =0 &\quad \text{on }(0,\infty) \times \partial \Omega,\\
\widetilde{z}(0,\cdot)=0 &\quad \text{in }\Omega.
\end{array}
\right.
\end{equation}
The system above admits a solution using the fact that the operator
$$
\mathbb{A}=\mathcal{P}\Delta-\gamma_0 \Id, \quad \mathcal{D}(\mathbb{A})=\{z\in [H^2(\Omega)]^3 \,;\,z\cdot n=0,\quad \left[ 2\nu D(z)n +\beta z\right]_{\tau} =0 \},
$$
generates an analytic semigroup exponentially stable where $\mathcal{P}$ designates the Leray projection. This is a direct consequence of \cite[Theorem 2.12, p.115]{BDDM}. We deduce then the existence of a unique solution $(\widetilde{v},\widetilde{\pi})\in C_0^{\infty}(\mathbb{R}_+,[H^2(\Omega)]^3)\times C_0^{\infty}(\mathbb{R}_+,H^1(\Omega)/ \mathbb{R})$ of the system \eqref{sys3*}. 
	
Now, we show $\eqref{estim*}$. Let $\phi \in C_0^\infty(\mathbb{R}_+,[L^2(\Omega)]^3)$ such that $\nabla \cdot\phi=0$, $\phi\cdot n=0$, and we suppose that $\phi=0$ on $(T,\infty)\times\Omega$ where $T>0$. We consider the system
\begin{equation}\label{c3}
\left\{
\begin{array}{rlc}
\partial_t\psi-(\gamma_0-\gamma)\psi+\nabla\cdot\mathbb{T}(\psi,\theta)=\phi  & \quad \text{in }\mathbb{R} \times \Omega,\\
\nabla \cdot \psi=0  &  \quad \text{in }\mathbb{R} \times \Omega,\\
\psi_{n}=0 &\quad \text{on }\mathbb{R} \times \partial\Omega,\\
\left[ 2\nu D(\psi)n+\beta \psi\right]_{\tau} =0 &\quad \text{on }\mathbb{R} \times \partial\Omega,\\
\psi(T)=0 &\quad\text{in }\Omega.\\
\end{array}
\right.
\end{equation}
We observe that $\psi=0$, $\theta=0$ on $(T,\infty)\times\Omega$. The system \eqref{c3} admits a unique solution
$$(\psi,\theta)\in W(\mathbb{R};[H^2(\Omega)]^3,[L^2(\Omega)]^3)\times L^2(\mathbb{R};H^1(\Omega)/\mathbb{R}),$$ 
exponentially stable satisfying
\begin{equation}
\label{h}
\left\|\psi\right\|_{W(\mathbb{R};[H^2(\Omega)]^3,[L^2(\Omega)]^3)}+\left\|\theta\right\|_{L^2(\mathbb{R};H^1(\Omega)/\mathbb{R})} \leq C\left\|\phi \right\|_{L^2(\mathbb{R};[L^2(\Omega)]^3)}.  
\end{equation}
We have
\begin{multline*}
\int_\mathbb{R}\int_{\Omega}e^{\gamma t} \widetilde{v}\cdot \phi \ dy dt=\int_\mathbb{R}\int_\Omega e^{\gamma t} \widetilde{v} \cdot (\partial_t\psi-(\gamma_0-\gamma)\psi+\nabla\cdot\mathbb{T}(\psi,\theta)) \ dy dt
\\=-\int_\mathbb{R} \int_\Omega e^{\gamma t}( \partial_t\widetilde{v}+\gamma_0\widetilde{v}-\nabla\cdot\mathbb{T}(\widetilde{v},\widetilde{\pi}))\cdot\psi \ dy dt-\int_\mathbb{R}\int_{\partial\Omega} e^{\gamma t}\widetilde{g}_\tau\cdot\psi_\tau \ d\Gamma dt\\=-\int_\mathbb{R}\int_{\partial\Omega} e^{\gamma t}\widetilde{g}_\tau\cdot\psi_\tau \ d\Gamma dt .
\end{multline*}
Using \eqref{h}, we get 
\begin{equation}
\label{h1}
\left\|e^{\gamma t}\widetilde{v}\right\|_{L^2(\mathbb{R};[L^2(\Omega)]^3)}\leq C \left\|e^{\gamma t}\widetilde{g}\right\|_{L^2(\mathbb{R};[H^1(\Omega)]^3)}.  
\end{equation}
In the other hand, we extend $n$ on $\Omega$ and we obtain
\begin{equation*}
\int_{\partial\Omega}e^{\gamma t}\partial_t\widetilde{g}\cdot\psi\ d\Gamma=\sum_{i}\int_\Omega e^{\gamma t}\partial_i((n)_i\partial_t \widetilde{g})\cdot\psi\ dy\\ + \sum_{i}\int_\Omega  e^{\gamma t}((n)_i\partial _t\widetilde{g})\cdot\partial_i\psi\ dy.
\end{equation*}
Then, 
\begin{equation}
\label{h4}
\int_\mathbb{R}\int_\Omega e^{\gamma t}\partial_t\widetilde{v} \cdot\phi \ dy dt= -\sum_{i}\int_\mathbb{R}\int_\Omega\left(  e^{\gamma t}\partial_i((n)_i\partial_t \widetilde{g})\cdot\psi+ e^{\gamma t}((n)_i\partial_t\widetilde{g})\cdot\partial_i\psi\right)\ dy dt.
\end{equation}
To estimate $t\longrightarrow e^{\gamma t}\partial_t\widetilde{v}(t)$ in $L^2(\mathbb{R},[L^2(\Omega)]^3)$, we notice that 
\begin{multline}
\int_{\mathbb{R}}\int_\Omega e^{\gamma t}\partial_i((n)_i\partial_t \widetilde{g})\cdot\psi \ dy dt=-\gamma\int_{\mathbb{R}}\int_\Omega e^{\gamma t}\partial_i((n)_i \widetilde{g})\cdot\psi \ dy dt-\int_{\mathbb{R}}\int_\Omega e^{\gamma t}\partial_i((n)_i \widetilde{g})\cdot\partial_t\psi \ dy dt .
\end{multline}
Then,
\begin{multline}
\label{h3}
\left| \left\langle  e^{\gamma t}\partial_i((n)_i\partial_t \widetilde{g}),\psi\right\rangle_{L^2(\mathbb{R};[L^2(\Omega)]^3)}\right| \leq C \left\|e^{\gamma t}\widetilde{g}\right\|_{L^2(\mathbb{R};[H^1(\Omega)]^3)}  \bigg(  \left\| \partial_t\psi\right\|_{L^2(\mathbb{R},[L^2(\Omega)]^3)}\\+\gamma\left\|\psi\right\|_{L^2(\mathbb{R},[L^2(\Omega)]^3)} \bigg).
\end{multline} 
Moreover, 
\begin{equation*}
\left\langle  e^{\gamma t}((n)_i\partial_t \widetilde{g}),\partial_i\psi\right\rangle_{L^2(\mathbb{R};[L^2(\Omega)]^3)}=\left\langle \partial_t(e^{\gamma t}\widetilde{g}),(n)_i\partial_i\psi\right\rangle_{L^2(\mathbb{R};[L^2(\Omega)]^3)}-\gamma \left\langle e^{\gamma t}\widetilde{g},(n)_i\partial_i\psi\right\rangle_{L^2(\mathbb{R};[L^2(\Omega)]^3)}.
\end{equation*}
Then, using \cite[Proposition 2.3, Proposition 2.8]{MR2384339}, we obtain
\begin{multline}
\label{h2}
\left|\left\langle  e^{\gamma t}((n)_i\partial_t \widetilde{g}),\partial_i\psi\right\rangle_{L^2(\mathbb{R};[L^2(\Omega)]^3)}\right|\leq C \left\|\widetilde{ g}\right\|_{W^{1/2}(\mathbb{R};[H^1(\Omega)]^3,[L^2(\Omega)]^3)}\left\| \nabla\psi\right\|_{W^{1/2}(\mathbb{R};[H^1(\Omega)]^9,[L^2(\Omega)]^9)}\\.\leq C \left\| \widetilde{g}\right\|_{W^{1/2}(\mathbb{R};[H^1(\Omega)]^3,[L^2(\Omega)]^3)} \left\|\psi\right\|_{W(\mathbb{R};[H^2(\Omega)]^3,[L^2(\Omega)]^3)}.
\end{multline}
Combining \eqref{h2}, \eqref{h3}, \eqref{h4} and \eqref{h} we have
\begin{equation}
\label{lol1}
\left\|e^{\gamma t}\partial_t\widetilde{v}\right\|_{L^2(\mathbb{R};[L^2(\Omega)]^3)}\leq C \left\|e^{\gamma t}\widetilde{g}\right\|_{W^{1/2}(\mathbb{R};[H^1(\Omega)]^3,[L^2(\Omega)]^3)}.
\end{equation}
Finally, from the classical elliptic estimates of the Stokes problem, we have
\begin{equation}
\label{lol}
\left\|\widetilde{v}(t)\right\|_{[H^2(\Omega)]^3}+ \left\|\nabla\widetilde{\pi}(t)\right\|_{[L^2(\Omega)]^3}\leq C\left( \left\|\partial_t\widetilde{v}(t)\right\|_{[L^2(\Omega)]^3}+\left\|\widetilde{g}(t)\right\|_{[H^1(\Omega)]^3}\right). 
\end{equation}
Using \eqref{lol} and \eqref{lol1}, we obtain \eqref{estim*}.
\end{proof}

We set 
$$
w=\widetilde{w}-\widetilde{v},\quad q=\widetilde{q}-\widetilde{\pi}.
$$
Then, from \cref{lem}, we only need to consider the linear system 
\begin{equation}
\label{oseen2}
\left\{
\begin{array}{rl}
\partial_t w+(w^S\cdot \nabla)w+(w\cdot \nabla)w^S-\nabla\cdot\mathbb{T}(w,q)-\nabla\cdot\mathcal{L}^1(\xi)+\mathcal{L}^2(\xi,\partial_t\xi)=f& \text{in} \ (0,\infty)\times \Omega,\\
\nabla \cdot w=0& \text{in} \ (0,\infty)\times \Omega,\\
\partial_{tt} \xi+A_1\xi+A_2 \partial_t \xi+\mathcal{T}^*(\mathcal{L}^1(\xi)n)=-\mathcal{T}^*(\mathbb{T}(w,q)n)+h & \text{in} \ (0,\infty)\times \omega,
\end{array}
\right.
\end{equation}
with the boundary conditions
\begin{equation}
\label{oseenb2}
\left\{
\begin{array}{rl}
[w-\mathcal{T}\partial_t \xi]_{n}=1_{\Gamma_0}(\mathbb{M}v)_{n} &  \text{on} \ (0,\infty)\times\partial\Omega,\\
\left[ 2\nu D(w) n + \beta \left(w-\mathcal{T}\partial_t \xi\right)+\mathcal{L}^1(\xi)n+\mathcal{L}^3(\xi)\right]_{\tau}=1_{\Gamma_0}\beta(\mathbb{M} v)_{\tau} &\text{on} \ (0,\infty)\times\partial\Omega,
\end{array}
\right.
\end{equation}
and the initial conditions
\begin{equation}\label{oseenc}
\left\{
\begin{array}{rl}
w(0,\cdot )=w^0  &\text{in}\ \Omega,\\
\xi(0, \cdot)=\xi^0 &\text{in}\  \omega,\\
\partial_t \xi(0,\cdot)=\xi^1&\text{in}\  \omega,\\
\end{array}
\right.
\end{equation}
where 
\begin{equation}
\label{fh}
f=\widetilde{f}-(w^S\cdot \nabla)\widetilde{v}-(\widetilde{v}\cdot \nabla)w^S+\gamma_0\widetilde{v},\quad h=\widetilde{h}-\mathcal{T}^*(\mathbb{T}(\widetilde{v},\widetilde{\pi})n).
\end{equation}
In this section, we prove that the system \eqref{oseen2}, \eqref{oseenb2} and \eqref{oseenc} is exponentially stabilisable.

We define
\begin{equation*}
\mathbb{H}=\{
\left( w,\eta_1,\eta_2\right)\in [L^2(\Omega)]^3 \times\mathcal{D}(A_1^{1/2})\times L^2_0(\omega)
\ ; \ \nabla \cdot w= 0 \ \text{in}  \ \Omega, 
\left[{w}-\mathcal{T}\eta_2\right]_{n}=0 \ \text{on}\ \partial \Omega
\} ,
\end{equation*}
with the inner product 
\begin{equation*}
\left\langle \begin{pmatrix}
w\\\eta_1\\ \eta_2
\end{pmatrix},\begin{pmatrix}
v\\ \xi_1\\ \xi_2
\end{pmatrix} \right\rangle_{\mathbb{H}} =\left\langle w, v \right\rangle_{[L^2(\Omega)]^3}+\left\langle A_1^{1/2} \eta_1, A_1^{1/2}\xi_1\right\rangle _{L^2(\omega)}
+\left\langle \eta_2,\xi_2\right\rangle _{L^2(\omega)}.
\end{equation*}
We define also
\begin{align}
\mathbb{V}&= \left([H^1(\Omega)]^3\times\mathcal{D}(A_1^{3/4})\times \mathcal{D}(A_1^{1/4})\right)\cap \mathbb{H},
\end{align}
and the orthogonal projection $\mathbb{P}$ 
$$
\mathbb{P}:[L^2(\Omega)]^3 \times\mathcal{D}(A_1^{1/2})\times L^2_0(\omega) \longrightarrow \mathbb{H}.
$$
Finally, we set the operator $\mathcal{A}_S$
\begin{equation}
\label{as}
\mathcal{A}_S \begin{pmatrix}
w\\ \eta_1\\ \eta_2
\end{pmatrix}= \begin{pmatrix}
\nu \Delta w-(w^S\cdot \nabla)w-(w\cdot \nabla)w^S+\nabla\cdot\mathcal{L}^1(\eta_1)-\mathcal{L}^2(\eta_1,\eta_2)\\ \eta_2\\ -A_1\eta_1-A_2\eta_2-\mathcal{T}^*(2\nu D(w)n+\mathcal{L}^1(\eta_1)n)
\end{pmatrix},
\end{equation}
\begin{multline}
\mathcal{D}(\mathcal{A}_S)
=\{\left( w,\eta_1,\eta_2\right)\in \left([H^2(\Omega)]^3\times \mathcal{D}(A_1)\times \mathcal{D}(A_1^{1/2})\right)\cap \mathbb{V},
\\
\left[2\nu D({w})n+\beta ({w}-\mathcal{T}\eta_2)+\mathcal{L}^1(\eta_1)n+\mathcal{L}^3(\eta_1)\right]_{\tau}=0 \quad \text{on} \ \partial \Omega 
\},
\end{multline}
and the operator $A_S$ defined as follows
\begin{equation}
\label{asd}
A_S=\mathbb{P}\mathcal{A}_S,\quad \mathcal{D}(A_S)=\mathcal{D}(\mathcal{A}_S).
\end{equation}
\begin{Proposition}
\label{acc}
There exists $\lambda_0>0$ such that the operator $\lambda_0\Id-A_S$ is the infinitesimal generator of a strongly continuous semigroup of contractions on $\mathbb{H}$.
\end{Proposition}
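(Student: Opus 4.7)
The plan is to apply the Lumer--Phillips theorem for contraction semigroups on the Hilbert space $\mathbb{H}$. Concretely, I aim to prove that for a suitably large $\lambda_0>0$ the operator $A_S-\lambda_0\Id$ is dissipative, and that $\mu\Id-(A_S-\lambda_0)$ is surjective from $\mathcal{D}(A_S)$ onto $\mathbb{H}$ for some $\mu>0$.

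The first step is the energy identity. I take $z=(w,\eta_1,\eta_2)^T\in\mathcal{D}(A_S)$ and compute $\langle A_S z,z\rangle_{\mathbb{H}}$ directly from \eqref{as}. The high--order coupling on the structure cancels automatically: the second component contributes $\int_\omega A_1^{1/2}\eta_2\cdot A_1^{1/2}\eta_1\,ds$, which by self-adjointness of $A_1$ equals $\int_\omega A_1\eta_1\cdot \eta_2\,ds$ and cancels the $-A_1\eta_1$ term in the $L^2(\omega)$-inner product with $\eta_2$. Integrating by parts the $\nu\Delta w$ and $\nabla\cdot\mathcal{L}^1(\eta_1)$ terms produces
\begin{equation*}
\int_{\partial\Omega}\bigl(2\nu D(w)n+\mathcal{L}^1(\eta_1)n\bigr)\cdot w\,d\Gamma-2\nu\|D(w)\|_{L^2(\Omega)}^2-\int_\Omega \mathcal{L}^1(\eta_1):\nabla w\,dy,
\end{equation*}
while the third component contributes $-\int_{\partial\Omega}\bigl(2\nu D(w)n+\mathcal{L}^1(\eta_1)n\bigr)\cdot\mathcal{T}\eta_2\,d\Gamma-\delta\|\nabla\eta_2\|_{L^2(\omega)}^2$ through the adjoint identity for $\mathcal{T}^*$. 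Combining these boundary integrals and using the kinematic condition $(w-\mathcal{T}\eta_2)_n=0$, the result reduces to a pure tangential integral, which the Navier-type condition $[2\nu D(w)n+\beta(w-\mathcal{T}\eta_2)+\mathcal{L}^1(\eta_1)n+\mathcal{L}^3(\eta_1)]_\tau=0$ converts into $-\beta\|(w-\mathcal{T}\eta_2)_\tau\|_{L^2(\partial\Omega)}^2-\int_{\partial\Omega}\mathcal{L}^3(\eta_1)_\tau\cdot(w-\mathcal{T}\eta_2)_\tau\,d\Gamma$. The bulk convective terms $(w^S\cdot\nabla)w$ and $(w\cdot\nabla)w^S$ and the zeroth/first-order remainders from $\mathcal{L}^1,\mathcal{L}^2,\mathcal{L}^3$ are all estimated by $C\|z\|_{\mathbb{V}}\|z\|_{\mathbb{H}}$ thanks to $(w^S,p^S,\eta^S)\in [W^{2,\infty}(\Omega)]^3\times W^{1,\infty}(\Omega)\times C^4(\omega)$ and standard trace inequalities. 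Korn's inequality and Young's inequality then absorb those terms into the dissipative quadratic form $-2\nu\|D(w)\|^2-\delta\|\nabla\eta_2\|^2-\beta\|(w-\mathcal{T}\eta_2)_\tau\|^2$, yielding
\begin{equation*}
\langle A_S z,z\rangle_{\mathbb{H}}\leq \lambda_0\|z\|_{\mathbb{H}}^2
\end{equation*}
for some $\lambda_0>0$, i.e.\ dissipativity of $A_S-\lambda_0\Id$.

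For the range condition I need, given $(F,G_1,G_2)\in\mathbb{H}$, to solve $(\mu+\lambda_0)z-A_S z=(F,G_1,G_2)$ for some fixed $\mu>0$. Substituting $\eta_2=(\mu+\lambda_0)\eta_1-G_1$ reduces the problem to a coupled Stokes--plate system for $(w,\eta_1)$ with the Navier-slip/kinematic boundary conditions inherited from $\mathcal{D}(A_S)$. I set up its weak formulation on the subspace
$\{(v,\varphi)\in [H^1(\Omega)]^3\times\mathcal{D}(A_1^{3/4})\cap\mathbb{V}\}$, using $\varphi$ to test the plate equation and $v$ (divergence-free, tangential on $\partial\Omega$ modulo $\mathcal{T}((\mu+\lambda_0)\varphi)$) to test the fluid equation. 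Repeating the previous integration-by-parts computation with these test functions makes the resulting bilinear form reproduce the same favourable quadratic structure plus the coercive terms $(\mu+\lambda_0)\|v\|^2+(\mu+\lambda_0)^2\|\varphi\|^2+\|A_1^{1/2}\varphi\|^2$; for $\mu+\lambda_0$ sufficiently large the form is coercive. Lax--Milgram supplies a unique weak solution, and elliptic regularity for the Stokes problem with Navier-slip boundary conditions of Beir\~ao da Veiga \cite{BdV} (available since $\eta^S\in C^4(\omega)$) together with classical $H^4$-regularity for the biharmonic operator $A_1$ give the strong regularity required to conclude $(w,\eta_1,\eta_2)\in\mathcal{D}(A_S)$.

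The main obstacle is keeping track of the boundary integrals so that the high-order coupling with $\mathcal{T}$ and $\mathcal{T}^*$ cancels cleanly and only the non-positive surface term $-\beta\|(w-\mathcal{T}\eta_2)_\tau\|^2$ plus controllable remainders survive; this relies crucially on the choice of the $\mathcal{D}(A_1^{1/2})$-inner product on the second slot of $\mathbb{H}$ and on the precise matching of $\mathcal{L}^1$ on both sides of the interface. Once that cancellation is established, the dissipativity and the coercivity of the stationary bilinear form follow from the same calculation, and Lumer--Phillips delivers the claimed contraction semigroup.
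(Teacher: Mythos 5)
Your proof follows essentially the same route as the paper's: Lumer--Phillips, with dissipativity of $\lambda_0\Id-A_S$ obtained from the same energy identity (cancellation of the $A_1$ coupling through the $\mathcal{D}(A_1^{1/2})$ inner product on the second slot, reduction of the boundary terms via the kinematic and Navier conditions to $-\beta\left\|(w-\mathcal{T}\eta_2)_{\tau}\right\|^2$ plus the $\mathcal{L}^3$ remainder, and absorption of the lower-order perturbations by Korn and Young), and surjectivity via a Lax--Milgram formulation followed by De Rham for the pressure and the Navier-slip Stokes regularity of \cite{BdV}. The only deviation is that you eliminate $\eta_2=(\mu+\lambda_0)\eta_1-G_1$ whereas the paper eliminates $\eta_1=\lambda_0^{-1}(G_1+\eta_2)$ and works in the space $\mathcal{V}$ of pairs $(\phi,\xi)$ with $[\phi-\mathcal{T}\xi]_{n}=0$; the paper's choice keeps the kinematic constraint homogeneous in the unknowns, while yours turns it into the affine condition $[w-(\mu+\lambda_0)\mathcal{T}\eta_1]_{n}=-[\mathcal{T}G_1]_{n}$ and therefore requires an additional (routine) lifting of this normal trace before Lax--Milgram applies.
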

\begin{proof}
Let $W=(w,\xi_1,\xi_2)\in\mathcal{D}(A_S)$. We show that $\lambda_0\Id-A_S$ is dissipative. 	
\begin{multline}
\left\langle (\lambda_0\Id-A_S)W,W \right\rangle=\lambda_0\int_{\Omega} \left| w\right|^2 \ dy+\int_{\Omega} (\nabla w^S)^*w\cdot w \ dy+2\nu\int_\Omega\left|Dw\right|^2 \ dy\\+\int_{\Omega}\mathcal{L}^1(\xi_1):\nabla w \ dy+\int_{\Omega}\mathcal{L}^2(\xi_1,\xi_2)\cdot w \ dy+\int_{\partial\Omega}\beta\left|(w-\mathcal{T}\xi_2)_{\tau}\right|^2 \ d\Gamma\\+\int_{\partial\Omega}[\mathcal{L}^3(\xi_1)]_{\tau}\cdot(w-\mathcal{T}\xi_2)_{\tau}\ d\Gamma +\int_{\omega}\left| A_2^{1/2}\xi_2\right|^2 \ ds+\lambda_0\int_\omega \left| A_1^{1/2}\xi_1\right|^2 \ ds\\+\lambda_0\int_\omega \left| \xi_2\right| ^2\ ds.  
\end{multline}
From \eqref{s11} and the fact that $Q^i$, $i=1,...,8$ are of regularity $W^{1,\infty}(\Omega)$, then
\begin{equation}
\label{s3}
\left\| \mathcal{L}^1(\xi_1)\right\|_{[L^2(\Omega)]^9}^2 \leq C\left\| A_1^{1/2}\xi_1\right\|_{L^2_0(\omega)}^2,
\end{equation}
\begin{equation}
\label{s4}
\left\| \mathcal{L}^2(\xi_1,\xi_2)\right\|_{[L^2(\Omega)]^3}^2 \leq C\left( \left\| A_1^{1/2}\xi_1\right\|_{L^2_0(\omega)}^2+\left\| A_2^{1/2}\xi_2\right\|_{L^2_0(\omega)}^2\right).
\end{equation}
Since $\xi_1$ is a periodic function, we get
$$
\left\|\xi_1\right\|_{L^6(\omega)}+\left\|\partial_s\xi_1\right\|_{L^6(\omega)}\leq C\left\|\xi_1\right\|_{H^2(\omega)}\leq C\left\|A_1^{1/2}\xi_1\right\|_{L_0^2(\omega)}.
$$
Using that $Q^9$ and $Q^{10}$ are of regularity $W^{1/2,\infty}(\partial\Omega)\hookrightarrow L^3(\partial\Omega)$, then we obtain 
\begin{equation}
\label{s5}
\left\| \mathcal{L}^3(\xi_1)\right\|_{[L^2(\Omega)]^3}^2\leq C \left\| A_1^{1/2}\xi_1\right\|_{L^2_0(\omega)}^2.
\end{equation}
Using \eqref{s3}, \eqref{s4} and \eqref{s5}, we find
\begin{multline}
\int_{\Omega} (\nabla w^S)^*w\cdot w \ dy+\int_{\Omega}\mathcal{L}^1(\xi_1):\nabla w \ dy+\int_{\Omega}\mathcal{L}^2(\xi_1,\xi_2)\cdot w \ dy\\+\int_{\partial\Omega}[\mathcal{L}^3(\xi_1)]_{\tau}\cdot(w-\mathcal{T}\xi_2)_{\tau}\ d\Gamma\geq -\varepsilon \bigg(\left\| w\right\|^2_{[H^1(\Omega)]^3} + \left\| A_2^{1/2}\xi_2\right\|_{L^2(\Omega)}^2\bigg)\\- C\left( \left\|w \right\|^2_{[L^2(\Omega)]^3}+\left\| \xi_2\right\|^2_{L^2(\omega)} +\left\| A_1^{1/2}\xi_1\right\|^2_{L^2(\omega)}\right).
\end{multline}
Thanks to the classical Korn's inequality, we obtain for $\lambda_0$ large enough
$$
\left\langle (\lambda_0\Id-A_S)W,W \right\rangle \geq 0.
$$
It remains to show that the oprator $\lambda_0\Id-A_S$ is m-dissipative: we prove that $\lambda_0\Id-A_S$ is onto.

Let $F=\begin{pmatrix}
f\\ g\\h
\end{pmatrix}\in \mathbb{H}$ and we consider the equation
\begin{equation}
\label{onto*}
(\lambda_0\Id-A_S)\begin{pmatrix}
w\\ \eta_1\\ \eta_2
\end{pmatrix}=F,
\end{equation}

\begin{subequations}\label{sys*}
\begin{align}
\lambda_0 w -\nabla \cdot \mathbb{T}(w,q)+(w^S\cdot \nabla)w+(w\cdot \nabla)w^S-\nabla\cdot\mathcal{L}^1(\eta_1)+\mathcal{L}^2(\eta_1,\eta_2)=f \quad \text{in}\ \Omega, \label{sys01*}\\
\nabla \cdot w=0 \quad \text{in}\  \Omega,\label{sys02*}\\
\lambda_0 \eta_1-\eta_2 =g \quad \text{in}\ \omega,\label{sys03*}\\
\lambda_0\eta_2+A_1\eta_1+A_2 \eta_2=-\mathcal{T}^*(\mathbb{T}(w,q)n+\mathcal{L}^1(\eta_1)n)+h \quad \text{in}\  \omega,\label{sys04*}\\
\relax [w-\mathcal{T}\eta_2]_{n}=0 \quad \text{on}\ \partial\Omega,\label{sys05*}\\
\left[2\nu D(w)n+\beta (w-\mathcal{T}\eta_2)+\mathcal{L}^1(\eta_1)n+\mathcal{L}^3(\eta_1)\right]_{\tau}=0\quad \text{on}\ \partial\Omega.\label{sys06*}
\end{align}
\end{subequations}
To solve the system above, we introduce the space
$$
\mathcal{V}=
\left\lbrace
(\phi,\xi)\in [H^1(\Omega)]^3\times \mathcal{D}(A_1^{1/2}) \mid 
\nabla \cdot \phi= 0\quad \text{in}  \ \Omega, 
\quad 
[\phi-\mathcal{T}\xi]_{n}=0 \quad\text{on}\ \partial\Omega
\right\rbrace.
$$
Then, solving the equation \eqref{onto*} is reduced to solve the following variational problem: one need to find $(w,\eta_2)\in \mathcal{V}$ such that
\begin{equation}\label{fv*}
a\left( \begin{pmatrix}w\\ \eta_2\end{pmatrix},
\begin{pmatrix}\phi \\ \xi\end{pmatrix}\right) 
= 
L\begin{pmatrix}\phi \\ \xi\end{pmatrix}
\quad \left(\begin{pmatrix}\phi \\ \xi\end{pmatrix}\in \mathcal{V}\right),
\end{equation}
with $a: \mathcal{V} \times \mathcal{V}\longrightarrow \mathbb{R}$ the bilinear form given by
\begin{multline*}
a\left( \begin{pmatrix}w\\ \eta_2\end{pmatrix},
\begin{pmatrix}\phi \\ \xi\end{pmatrix}\right)
=\lambda_0 \int_{\Omega} w\cdot \phi \ dy+\int_{\Omega}((w^S\cdot \nabla)w+(w\cdot \nabla)w^S)\cdot \phi \ dy+2\nu \int_{\Omega}D(w):D(\phi)\ dy\\ +\frac{1}{\lambda_0}\int_{\Omega}\mathcal{L}^1(\eta_2):\nabla \phi \ dy+\frac{1}{\lambda_0}\int_{\Omega}\mathcal{L}^{2,1}(\eta_2)\cdot \phi \ dy+\int_{\Omega}\mathcal{L}^{2,2}(\eta_2)\cdot \phi \ dy+\lambda_0 \int_{\omega} \eta_2\cdot \xi\  ds
\\+\int_{\omega}  (A_2\eta_2)\cdot \xi \ ds
+\frac{1}{\lambda_0}\int_{\omega}  (A_1^{1/2}\eta_2)\cdot (A_1^{1/2} \xi)  \ ds
+\int_{\partial \Omega} \beta [w-\mathcal{T}(\eta_2)]_{\tau}\cdot [\phi-\mathcal{T}(\xi)]_{\tau}  \ d\Gamma\\
+\frac{1}{\lambda_0}\int_{\partial \Omega} [\mathcal{L}^3(\eta_2)]_{\tau}\cdot [\phi-\mathcal{T}(\xi)]_{\tau}  \ d\Gamma,
\end{multline*}
and $L:\mathcal{V}\longrightarrow \mathbb{R}$ the linear form defined by
\begin{multline*}
L\begin{pmatrix}\phi \\ \xi\end{pmatrix}
= \int_{\Omega} f \cdot \phi \ dy+\int_{\omega} h\cdot \xi\ ds-\frac{1}{\lambda_0}\int_{\omega} (A_1^{1/2}g)\cdot (A_1^{1/2}\xi) \ ds\\-\frac{1}{\lambda_0}\int_{\Omega}\mathcal{L}^1(g):\nabla \phi \ dy-\frac{1}{\lambda_0}\int_{\Omega}\mathcal{L}^{2,1}(g)\cdot \phi \ dy-\frac{1}{\lambda_0}\int_{\partial \Omega} [\mathcal{L}^3(g)]_{\tau}\cdot [\phi-\mathcal{T}(\xi)]_{\tau}  \ d\Gamma.
\end{multline*}
The bilinear form $a$ is continuous and coercive on $\mathcal{V}$ thanks to the classical Korn's inequality and $L$ is continuous on $\mathcal{V}$. Using the Lax-Milgram theorem, there exists a unique $(w,\eta_2)\in \mathcal{V}$ that is solution to \eqref{fv*}.	
Now, taking $\xi=0$ and $\phi\in \mathcal{D}_\sigma(\Omega)$, the equation \eqref{fv*} becomes
\begin{multline*}
\lambda_0 \int_{\Omega} w\cdot \phi \ dy + \int_{\Omega}((w^S\cdot \nabla)w+(w\cdot \nabla)w^S)\cdot \phi \ dy +2\nu \int_{\Omega}D(w):D(\phi)\ dy
\\+\int_{\Omega}\mathcal{L}^1(\eta_1):\nabla \phi \ dy+\int_{\Omega}\mathcal{L}^{2}(\eta_1,\eta_2)\cdot \phi \ dy=\int_{\Omega} f \cdot \phi \ dy,
\end{multline*}
that is equivalent to
\begin{equation*}
\left\langle \lambda_0 w+(w^S\cdot \nabla)w+(w\cdot \nabla)w^S-\nu\Delta w-\nabla \cdot\mathcal{L}^1(\eta_1)+\mathcal{L}^2(\eta_1,\eta_2)-f,\phi\right\rangle 
=0,\quad \forall \phi \in \mathcal{D}_\sigma(\Omega).
\end{equation*}
Using the De Rham theorem \cite[Proposition 1.2, p.14]{T}, 
we deduce the existence of a unique element $q\in L^2(\Omega)/\mathbb{R}$ such that \eqref{sys01*} is satisfied. 
In particular, we have  $\nabla\cdot\mathbb{T}(w,q)\in [L^2(\Omega)]^3$ and  $\mathbb{T}(w,q)\in [L^2(\Omega)]^9$. Consequently, we get $\mathbb{T}(w,q)n\in [H^{-1/2}(\partial \Omega)]^3$
and 
\begin{multline}
\label{im**}
\int_\Omega \mathbb{T}(w,q):D(\phi) \ dy-\left\langle\mathbb{T}(w,q)n,\phi\right\rangle_{H^{-1/2},H^{1/2}} 
\\=\int_\Omega (f-\lambda_0 w-(w^S\cdot \nabla)w-(w\cdot \nabla)w^S-\mathcal{L}^2(\eta_1,\eta_2))\cdot \phi \ dy-\int_\Omega\mathcal{L}^1(\eta_1):\nabla\phi \ dy\\+\int_{\partial\Omega}[\mathcal{L}^1(\eta_1)n]_{\tau}\cdot \phi_{\tau} \ d\Gamma,
\end{multline}	
for all $\phi\in [H^1(\Omega)]^3,\;\phi_{n}=0$. 
Taking $\xi=0$ in \eqref{fv*}, we obtain
\begin{multline}
\label{im2*}
\lambda_0 \int_{\Omega} w\cdot \phi \ dy+2\nu \int_{\Omega}D(w):D(\phi)\ dy
+\int_{\Omega}\mathcal{L}^1(\eta_1):\nabla \phi \ dy+\int_{\Omega}\mathcal{L}^{2}(\eta_1,\eta_2)\cdot \phi \ dy\\
+\int_{\Omega}((w^S\cdot \nabla)w+(w\cdot \nabla)w^S)\cdot \phi \ dy+\left\langle  [ \beta(w-\mathcal{T}(\eta_2))+\mathcal{L}^3(\eta_1)]_{\tau}, \phi  \ \right\rangle_{H^{-1/2},H^{1/2}}\\=\int_{\Omega} f \cdot \phi \ dy,
\end{multline}
for any $\phi\in [H^1(\Omega)]^3,\;\nabla\cdot \phi=0,\;\phi_{n}=0$. Comparing \eqref{im**} and \eqref{im2*} and taking into account the fact 
$$
\int_\Omega \mathbb{T}(w,q):D(\phi)\ dy=2\nu\int_\Omega D(w):D(\phi)\ dy, \quad \forall\phi\in [H^1(\Omega)]^3,\;\nabla\cdot \phi=0,\;\phi_{n}=0,
$$
we obtain for all $\phi\in [H^1(\Omega)]^3$ satisfying $\nabla\cdot \phi=0$ and $\phi_{n}=0$
\begin{equation}
\label{im1*}
-\left\langle \mathbb{T}(w,q)n,\phi\right\rangle_{H^{-1/2},H^{1/2}} =\left\langle [ \beta(w-\mathcal{T}\eta_2)+\mathcal{L}^1(\eta_1)n+\mathcal{L}^3(\eta_1)]_{\tau},\phi\right\rangle_{H^{-1/2},H^{1/2}}.
\end{equation}
Then, \eqref{im1*} is also satisfied for all $\phi\in [H^{1}(\Omega)]^3$, $\phi_{n}=0$ since we can always construct a divergence free function on $\Omega$ that coincides with $\phi$ at the boundary.
Plugging \eqref{im1*} in \eqref{im**}, we obtain
\begin{multline}
\label{im6*}
\int_{\Omega}((w^S\cdot \nabla)w+(w\cdot \nabla)w^S)\cdot \phi \ dy+2\nu\int_{\Omega} D(w):D(\phi)\ dy-\int_{\Omega}q\nabla\cdot\phi  \ dy\\+\left\langle[\beta(w-\mathcal{T}\eta_2)+\mathcal{L}^3(\eta_1)]_{\tau},\phi_{\tau} \right\rangle_{H^{-1/2},H^{1/2}}= \int_\Omega (f-\lambda_0 w-\mathcal{L}^2(\eta_1,\eta_2))\cdot \phi \ dy\\-\int_\Omega\mathcal{L}^1(\eta_1):\nabla\phi \ dy,
\end{multline}
for any $\phi\in [H^1(\Omega)]^3$, $\phi_{n}=0$.
	
Then, we deduce that $(w,q)$ is a weak solution of \eqref{sys01*}, \eqref{sys02*}, \eqref{sys05*} and \eqref{sys06*} in the sens of the \cite[Definition, p.10]{BdV}.
Since $\eta_2\in H^2(\omega)$ then $\mathcal{T}\eta_2\in [H^{2}(\partial\Omega)]^3$, we can apply \cite[Théorème 1.2]{BdV} to get that $(w,q) \in [H^2(\Omega)]^3\times H^1(\Omega)/\mathbb{R}$. 
	
Then, going back to \eqref{fv*}, we get
	
\begin{multline}
\int_{\omega}  (A_1^{1/2}\eta_1)\cdot (A_1^{1/2} \xi)  \ ds
=-\lambda \int_{\omega} \eta_2\cdot \xi\  ds
-\int_{\omega}  (A_2\eta_2)\cdot \xi \ ds
\\-\int_{\omega}  \mathcal{T}^*(\mathbb{T}(u,q)n+\mathcal{L}^1(\eta_1)n) \cdot \xi \ ds
+\int_{\omega} h\cdot \xi\ ds,
\end{multline}
for all $\xi\in\mathcal{D}(A_1^{1/2})$ where $\eta_1=\frac{1}{\lambda}(g+\eta_2)$. We notice that we have $\eta_1\in \mathcal{D}(A^{1/2})$, then $\mathcal{T}^*(\mathcal{L}^1(\eta_1)n)\in L^2_0(\omega)$. In the other hand,
we have $\mathbb{T}(w,q)n \in [H^{1/2}(\partial \Omega)]^{3}$, then it implies that $\mathcal{T}^*(\mathbb{T}(w,q)n) \in L^2_0(\omega)$. Moreover, 
since $\eta_2\in H^2(\omega)$, we deduce that $\eta_2\in \mathcal{D}(A_2)$. Thus, using the fact that $\mathcal{D}(A_1^{1/2})$ is dense in  $L^2_0(\omega)$, we obtain $A_1\eta_1 \in L^2_0(\omega)$.
	
Applying the Lumer-Phillips theorem, we deduce that $\lambda_0\Id-A_S$ generates a strongly continuous semigroup of contractions on $\mathbb{H}$.
\end{proof}

\begin{Proposition}
The adjoint operator of $A_S$ is given by
\begin{equation}
\label{as*}
A^*_S \begin{pmatrix}
\varphi\\ \zeta_1\\ \zeta_2
\end{pmatrix}=\mathbb{P} \begin{pmatrix}
\nu \Delta \varphi+(w^S\cdot \nabla)\varphi-(\nabla w^S)^*\varphi\\ -\zeta_2-A_1^{-1}\left( (\mathcal{L}^1)^*)(\nabla \varphi)+(\mathcal{L}^{2,1})^*\varphi+(\mathcal{L}^3)^*\varphi-((\mathcal{L}^3)^*(\mathcal{T}\zeta_2)\right) \\ A_1\zeta_1-A_2\zeta_2-\mathcal{T}^*(2\nu D(\varphi)n)-(\mathcal{L}^{2,2})^*\varphi
\end{pmatrix},
\end{equation}
and
\begin{multline}
\mathcal{D}(A^*_S)
=\{\left( \varphi,\zeta_1,\zeta_2\right)\in \left([H^2(\Omega)]^3\times \mathcal{D}(A_1)\times \mathcal{D}(A_1^{1/2})\right)\cap \mathbb{V},
\\
\left[2\nu D({\varphi})n+\beta ({\varphi}-\mathcal{T}\zeta_2)\right]_{\tau}=0 \quad \text{on} \ \partial \Omega 
\}.
\end{multline}
\end{Proposition}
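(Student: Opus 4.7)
The plan is to compute $\langle A_S W, W^* \rangle_{\mathbb{H}}$ for an arbitrary $W=(w,\eta_1,\eta_2)\in\mathcal{D}(A_S)$ against a smooth test triple $W^*=(\varphi,\zeta_1,\zeta_2)\in\mathbb{H}$, to move every differential operator and every surface trace onto $W^*$ by integrations by parts, and then to identify $A_S^*W^*$ together with $\mathcal{D}(A_S^*)$ from the resulting expression and from the conditions that make the leftover boundary integrals vanish for every admissible $W$. Since $\mathbb{P}$ is the orthogonal projection onto $\mathbb{H}$ and $W^*\in\mathbb{H}$, the pairing immediately reduces to $\langle \mathcal{A}_S W, W^*\rangle_{\mathbb{H}}$, so I work with the unprojected operator throughout.

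Integration by parts is carried out block by block. Using $\nabla\cdot w=0$ I rewrite $\nu\Delta w=\nabla\cdot(2\nu D(w))$ and apply Green's formula twice; the second pass uses $\nabla\cdot\varphi=0$ (from $W^*\in\mathbb{H}$) and produces the Laplacian on $\varphi$ together with the two traces $\int_{\partial\Omega}2\nu D(w)n\cdot\varphi$ and $-\int_{\partial\Omega}2\nu D(\varphi)n\cdot w$. The convection transposes to $(w^S\cdot\nabla)\varphi-(\nabla w^S)^*\varphi$ with no surface contribution, thanks to $\nabla\cdot w^S=0$ and $w^S_n=0$. The term $\nabla\cdot\mathcal{L}^1(\eta_1)$ transposes to $(\mathcal{L}^1)^*(\nabla\varphi)$ modulo a trace $\int_{\partial\Omega}\mathcal{L}^1(\eta_1)n\cdot\varphi$, while $\mathcal{L}^{2,1}$, $\mathcal{L}^{2,2}$, $\mathcal{L}^3$ transpose to their formal $L^2$-adjoints. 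On the plate side $A_1$ and $A_2$ are self-adjoint on $L^2_0(\omega)$ (the torus has no boundary), and the weighted second slot is absorbed via $\int_\omega A_1^{1/2}\eta_2\cdot A_1^{1/2}\zeta_1=\int_\omega\eta_2\cdot A_1\zeta_1$, legitimate once $\zeta_1\in\mathcal{D}(A_1)$.

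The decisive algebraic step is to invoke the boundary condition from $\mathcal{D}(A_S)$, $[2\nu D(w)n+\beta(w-\mathcal{T}\eta_2)+\mathcal{L}^1(\eta_1)n+\mathcal{L}^3(\eta_1)]_\tau=0$. Substituting it in the tangential part of $\int_{\partial\Omega}2\nu D(w)n\cdot\varphi_\tau$ cancels the tangential $\mathcal{L}^1(\eta_1)n$ trace produced a moment earlier, and lets me pair the remaining normal parts with $\varphi\cdot n=\mathcal{T}\zeta_2\cdot n$. Combining with the block $-\mathcal{T}^*(2\nu D(w)n+\mathcal{L}^1(\eta_1)n)\cdot\zeta_2$ from $\mathcal{A}_S W$, via the adjointness $\int_{\partial\Omega}X\cdot\mathcal{T}\zeta\,d\Gamma=\int_\omega\mathcal{T}^*X\cdot\zeta\,ds$, produces a residual which, after using the split $2\nu D(\varphi)n=(2\nu D(\varphi)n\cdot n)n+[2\nu D(\varphi)n]_\tau$, rearranges into a single clean combination. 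Regrouping according to whether terms pair against $w$, $\eta_1$, or $\eta_2$ yields the three components of the claimed formula; the $A_1^{-1}$ in the second slot is forced by rewriting every $L^2(\omega)$-pairing against $\eta_1$ as $\int_\omega A_1^{1/2}\eta_1\cdot A_1^{1/2}(A_1^{-1}\,\cdot\,)$, and the argument collects exactly $(\mathcal{L}^1)^*(\nabla\varphi)+(\mathcal{L}^{2,1})^*\varphi+(\mathcal{L}^3)^*(\varphi-\mathcal{T}\zeta_2)$ under $A_1^{-1}$.

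The only genuinely $w$-linear surface contribution that survives is $-\int_{\partial\Omega}[2\nu D(\varphi)n+\beta(\varphi-\mathcal{T}\zeta_2)]_\tau\cdot w_\tau$; requiring it to vanish for every admissible $w$ produces the tangential boundary condition in $\mathcal{D}(A_S^*)$, and the regularity $[H^2(\Omega)]^3\times\mathcal{D}(A_1)\times\mathcal{D}(A_1^{1/2})$ follows from the closed-range and elliptic-regularity argument that mirrors the one used for $A_S$ itself in \cref{acc}. I expect the main obstacle to be the bookkeeping of tangential versus normal traces on the two boundary components $\Gamma_0$ and $\Gamma(\eta^S)$: although $[w-\mathcal{T}\eta_2]_n=0$ and $[\varphi-\mathcal{T}\zeta_2]_n=0$ align the normal parts through $\mathcal{T}$, the tangential parts remain independent, and one must verify that all contributions involving $\mathcal{L}^1(\eta_1)n$, $\mathcal{L}^3(\eta_1)$ and $\mathcal{T}^*$ reorganize into precisely the compact expression stated.
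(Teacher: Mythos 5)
Your proposal is correct and follows essentially the same route as the paper: a direct duality computation of $\left\langle A_S W, W^*\right\rangle_{\mathbb{H}}$ with integration by parts, substitution of the boundary condition from $\mathcal{D}(A_S)$ into the surviving traces so that they reorganize into the $\beta$- and $\mathcal{L}^3$-terms, and regrouping of all $\eta_1$-pairings through $A_1^{-1}$ to read off the second component. The paper's printed proof is terser (two displays, with the domain characterization left unjustified), so your additional bookkeeping of the normal/tangential traces and of $\mathcal{D}(A_S^*)$ is, if anything, more complete than the original.
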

\begin{proof}
Let $(w,\eta_1,\eta_2)\in\mathcal{D}(A_S)$ and $(\varphi,\zeta_1,\zeta_2)\in\mathcal{D}(A^*_S)$. We have
\begin{multline}
\left\langle A_S(w,\eta_1,\eta_2),(\varphi,\zeta_1,\zeta_2)\right\rangle=-2\nu\int_\Omega D(w):D(\varphi) \ dy + \int_\Omega (w^S\cdot\nabla)\varphi\cdot w\\-\int_\Omega(\nabla w^S)^*\varphi\cdot w \ dy-\int_\Omega \mathcal{L}^1(\eta_1):\nabla w \ dy-\int_\Omega \mathcal{L}^2(\eta_1,\eta_2)\cdot w \ dy\\-\int_{\partial\Omega}\beta(w-\mathcal{T}\eta_2)_{\tau}\cdot(\varphi-\mathcal{T}\zeta_2)_{\tau} \ d\Gamma-\int_{\partial\Omega}[\mathcal{L}^3(\eta_1)]_{\tau}\cdot(\varphi-\mathcal{T}\zeta_2)_{\tau} \ d\Gamma\\-\int_\omega (A_2\zeta_2)\cdot\eta_2 \ ds.
\end{multline}
Then
\begin{multline}
\left\langle A_S(w,\eta_1,\eta_2),(\varphi,\zeta_1,\zeta_2)\right\rangle=\nu\int_\Omega \Delta\varphi\cdot w \ dy + \int_\Omega (w^S\cdot\nabla)\varphi\cdot w\\-\int_\Omega(\nabla w^S)^*\varphi\cdot w \ dy-\int_\Omega \mathcal{L}^1(\eta_1):\nabla \varphi \ dy-\int_\Omega \mathcal{L}^2(\eta_1,\eta_2)\cdot \varphi \ dy\\-\int_{\partial\Omega}[\mathcal{L}^3(\eta_1)]_{\tau}\cdot(\varphi-\mathcal{T}\zeta_2)_{\tau} \ d\Gamma-\int_\omega (A_2\zeta_2)\cdot\eta_2 \ ds-\int_\omega\mathcal{T}^*(2\nu D(\varphi)n)\eta_2 \ ds.
\end{multline}
Using \eqref{s11} and \eqref{s6}, we obtain \eqref{as*}.
\end{proof}
\begin{Proposition}
For $\theta\in [0,1]$, we have 
\begin{equation}
\mathcal{D}((\lambda_0\Id-A_S)^{\theta})=[\mathcal{D}(A),\mathbb{H}]_{1-\theta},\quad \mathcal{D}((\lambda_0\Id-A_S^*)^{\theta})=[\mathcal{D}(A^*),\mathbb{H}]_{1-\theta},
\end{equation}
for $\lambda_0>0$ large enough.
\end{Proposition}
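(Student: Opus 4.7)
The plan is to combine the variational structure established in the proof of \cref{acc} with classical results on fractional powers of operators arising from sesquilinear forms on a Hilbert space. The first step I would carry out is to upgrade the conclusion of \cref{acc} from a $C_0$ semigroup of contractions to an analytic semigroup. Revisiting the bilinear form $a$ constructed in \cref{acc}, one checks that $a$ is continuous on $\mathcal{V}\times\mathcal{V}$ and coercive on $\mathcal{V}$, where $\mathcal{V}\hookrightarrow\mathbb{H}$ densely and continuously; the Lions/Kato variational construction then shows that the operator associated with $a$ coincides with $\lambda_0\Id-A_S$, and in particular this operator is sectorial of angle strictly less than $\pi/2$.

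With sectoriality in hand, the fractional powers $(\lambda_0\Id-A_S)^{\theta}$ are defined by the Balakrishnan/Dunford calculus, and the second representation theorem for sesquilinear forms identifies $\mathcal{D}((\lambda_0\Id-A_S)^{1/2})$ with $\mathcal{V}$. I would then invoke the classical Komatsu identity: whenever a densely defined sectorial operator $T$ on a Hilbert space admits bounded imaginary powers, one has $\mathcal{D}(T^{\theta})=[\mathcal{D}(T),\mathbb{H}]_{1-\theta}$ with equivalent norms. The bounded imaginary powers assumption is automatic here since $\lambda_0\Id-A_S$ is m-accretive on a Hilbert space, which yields Kato's estimate $\|(\lambda_0\Id-A_S)^{it}\|\leq e^{|t|\pi/2}$. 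Combined with the identification $\mathcal{D}((\lambda_0\Id-A_S)^{1/2})=\mathcal{V}=[\mathcal{D}(A_S),\mathbb{H}]_{1/2}$ and a reiteration argument, this gives the desired identity for every $\theta\in[0,1]$.

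The statement for $A_S^*$ follows by applying the same argument to the adjoint sesquilinear form $a^*(V,W)=\overline{a(W,V)}$, which inherits continuity and coercivity on $\mathcal{V}$ from $a$ (the bounds used in \cref{acc} being symmetric in the two arguments after integration by parts); equivalently, one may invoke the fact that fractional powers commute with taking the Hilbert space adjoint, so that $[(\lambda_0\Id-A_S)^{\theta}]^*=(\lambda_0\Id-A_S^*)^{\theta}$ and interpolation spaces transfer accordingly.

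The main obstacle I anticipate is verifying coercivity of $a$ on the full space $\mathcal{V}$ after the shift by $\lambda_0$: the non-symmetric lower-order perturbations encoded in $\mathcal{L}^1$, $\mathcal{L}^2$, $\mathcal{L}^3$ break self-adjointness and must be absorbed using the estimates \eqref{s3}, \eqref{s4}, \eqref{s5} together with Korn's inequality, in exactly the spirit of the dissipativity calculation of \cref{acc}. Once this absorption is secured for $\lambda_0$ large enough, the interpolation identities are direct consequences of the operator-theoretic results recalled above.
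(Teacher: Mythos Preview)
Your proposal is correct and amounts to an unpacking of the paper's one-line proof, which simply invokes \cref{acc} (m-accretivity of $\lambda_0\Id-A_S$) together with \cite[Proposition 6.1, p.~171]{BDDM}; that reference packages precisely the Kato bounded-imaginary-powers estimate for m-accretive operators on Hilbert spaces and the resulting complex interpolation identity $\mathcal{D}(T^{\theta})=[\mathcal{D}(T),\mathbb{H}]_{1-\theta}$. Your detour through sectoriality and analyticity is not needed here---m-accretivity alone already yields Kato's estimate $\|(\lambda_0\Id-A_S)^{it}\|\leq e^{\pi|t|/2}$ and hence the interpolation characterization---and the paper in fact establishes analyticity only in the \emph{subsequent} proposition, by a separate perturbation argument.
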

\begin{proof}
The proof is a direct consequence of \cref{acc} and of \cite[Proposition 6.1, p.171]{BDDM}.
\end{proof}
\begin{Proposition}
The operator $A_S$ defined by \eqref{as} and \eqref{asd} is the infinitesimal generator of an analytical semigroup on $\mathbb{H}$.
\end{Proposition}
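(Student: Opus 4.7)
The plan is to deduce analyticity directly from the variational framework introduced in the proof of Proposition \ref{acc}, via the sector condition on the associated sesquilinear form. The key observation is that, when extended to complex-valued functions, the bilinear form $a$ on $\mathcal{V} \times \mathcal{V}$ is not only continuous and coercive (up to the shift by $\lambda_0 \Id$, as already shown in the proof of Proposition \ref{acc}), but also satisfies a sector condition. Indeed, its symmetric principal part contains the viscous term $2\nu \int_\Omega |D(w)|^2 \, dy$, the damping contribution $\int_\omega |A_2^{1/2} \eta_2|^2 \, ds$, and the friction term $\int_{\partial\Omega} \beta |(w - \mathcal{T}\eta_2)_\tau|^2 \, d\Gamma$. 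All anti-symmetric and lower-order contributions come from the convection $(w^S \cdot \nabla) w$, the couplings $\mathcal{L}^1, \mathcal{L}^2, \mathcal{L}^3$, and the asymmetric boundary terms; their coefficients are $W^{1,\infty}(\Omega)$ (resp.\ $W^{1/2,\infty}(\partial\Omega)$) and they can be absorbed by combining Cauchy--Schwarz, trace inequalities, and Young's inequality, exactly as in the proof of the Gårding inequality.

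I would then proceed in three steps. First, verify the sector inequality
\[
|\Im a(U,U)| \le C_1 \Re a(U,U) + C_2 \|U\|_{\mathbb{H}}^2 \quad \text{for all } U \in \mathcal{V},
\]
which together with continuity and the Gårding estimate makes $a$ a bounded sectorial form in the sense of Lions--Magenes. Second, invoke the classical theorem on sectorial forms on Hilbert spaces (see e.g.\ \cite[Theorem 2.12, p.115]{BDDM}, already used in this paper) to conclude that the operator associated with $a$ generates a bounded analytic semigroup on the underlying pivot space. Third, reincorporate the component $\eta_1$ through the identification $\eta_1 = \frac{1}{\lambda_0}(g + \eta_2)$ used in the proof of Proposition \ref{acc}, which shows that the operator built from $a$ coincides with $A_S$ up to the shift $\lambda_0 \Id$; since the shift preserves analyticity, the conclusion follows for $A_S$ itself.

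An alternative, and arguably more conceptual, route is a perturbation argument. Let $A_S^0$ denote the operator obtained by suppressing all terms involving the stationary state (so that $w^S$, $(w^S \cdot \nabla)$, $\mathcal{L}^1$, $\mathcal{L}^2$, $\mathcal{L}^3$ disappear). Then $A_S^0$ is the standard coupled Stokes--damped plate operator with Navier-slip boundary conditions, which is known to generate an analytic semigroup on $\mathbb{H}$ by virtue of the parabolic damping $-\delta \Delta \partial_t \eta$. Using the interpolation characterization of $\mathcal{D}((\lambda_0 \Id - A_S^0)^\theta)$ recalled just before the statement, one checks that the perturbations in $A_S - A_S^0$ -- first order in $w$, of order at most two in $\eta_1$ with $W^{1,\infty}$ coefficients, and of order at most one in $\eta_1$ on the boundary -- map $\mathcal{D}((\lambda_0 \Id - A_S^0)^\theta)$ into $\mathbb{H}$ for some $\theta < 1$; the standard perturbation theorem for analytic semigroups (see again \cite{BDDM}) then yields the claim.

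The main obstacle in either route lies in the boundary coupling through the term $\mathcal{T}^*(\mathcal{L}^1(\eta_1) n)$ in the plate equation and its reciprocal $\mathcal{L}^1(\xi) n$ in the fluid stress balance: they exchange regularity between the fluid trace and the structure, and the trace of $D(w)$ on $\Gamma(\eta^S)$ only belongs to $[H^{-1/2}(\partial\Omega)]^3$ for $w \in [H^1(\Omega)]^3$. The parabolic character of the plate equation, provided by $A_2 = -\delta \Delta$, is essential here: it furnishes the $H^1$-control of $\eta_2$ needed to absorb the half-derivative loss in the trace and to close the sector estimate. Without this damping the structure part would be hyperbolic and the argument would break down entirely.
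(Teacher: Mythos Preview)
Your second route (perturbation of the unperturbed Stokes--plate operator) is exactly what the paper does, with one essential twist you are missing: the paper applies the perturbation argument to the \emph{adjoint} $A_S^*$, not to $A_S$. The reason is a domain mismatch. Compare
\[
\mathcal{D}(A_S):\quad \bigl[2\nu D(w)n+\beta(w-\mathcal{T}\eta_2)+\mathcal{L}^1(\eta_1)n+\mathcal{L}^3(\eta_1)\bigr]_\tau=0,
\]
with
\[
\mathcal{D}(A_S^0)=\mathcal{D}(A):\quad \bigl[2\nu D(w)n+\beta(w-\mathcal{T}\eta_2)\bigr]_\tau=0.
\]
Since $\mathcal{D}(A_S)\neq\mathcal{D}(A_S^0)$, you cannot write $A_S=A_S^0+P$ with a perturbation $P:\mathcal{D}((\lambda_0-A_S^0)^\theta)\to\mathbb{H}$ in the usual sense; the boundary condition itself is perturbed, which places you outside the standard Kato/Pazy perturbation theorem you invoke. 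The paper sidesteps this by observing that $\mathcal{D}(A_S^*)=\mathcal{D}(A^*)$ (the $\mathcal{L}^i$ terms migrate into the interior of the adjoint operator, cf.\ the expression for $A_S^*$), writes $A_S^*=-A^*+O_S$, and checks that $O_S:\mathbb{V}=\mathcal{D}((A^*)^{1/2})\to\mathbb{H}$ is bounded. Analyticity for $A_S^*$ then follows from \cite[Corollaire 2.4, p.81]{PZ}, and hence for $A_S$ by duality.

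Your first route (sectorial form) has a more structural gap. The bilinear form $a$ displayed in the proof of Proposition~\ref{acc} lives on the reduced space $\mathcal{V}\ni(w,\eta_2)$ and was built by eliminating $\eta_1$ through $\eta_1=\frac{1}{\lambda_0}(g+\eta_2)$; note the explicit $\frac{1}{\lambda_0}$ factors in front of the $\mathcal{L}^1$, $\mathcal{L}^{2,1}$, $\mathcal{L}^3$ and $A_1^{1/2}$ terms. This form therefore depends on the specific resolvent parameter $\lambda_0$ and is not the sesquilinear form associated with $A_S$ on $\mathbb{H}$. The standard ``sectorial form $\Rightarrow$ analytic semigroup'' theorem requires a single, $\lambda$-independent form whose associated operator is $A_S$; the second-order-in-time structure $\xi_1'=\xi_2$ prevents such a reduction to a coercive form on the full state space, which is precisely why the literature on this system (and the paper) resorts to perturbation or direct resolvent estimates instead.
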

\begin{proof}
We notice that $A^*_S=-A^*+O_S$ where $A$ is the operator defined by 
\begin{multline}
\mathcal{D}(\mathcal{A})
=\{\left( w,\eta_1,\eta_2\right)\in \left([H^2(\Omega)]^3\times \mathcal{D}(A_1)\times \mathcal{D}(A_1^{1/2})\right)\cap \mathbb{V},
\\
\left[2\nu D({w})n+\beta ({w}-\mathcal{T}\eta_2)\right]_{\tau}=0 \quad \text{on} \ \partial \Omega 
\},
\end{multline}
\begin{equation}
\mathcal{A} \begin{pmatrix}
w\\ \eta_1\\ \eta_2
\end{pmatrix}= \begin{pmatrix}
-\nu \Delta w\\ -\eta_2\\ A_1\eta_1+A_2\eta_2+\mathcal{T}^*(2\nu D(w)n)
\end{pmatrix},
\end{equation}
\begin{equation}
\mathcal{D}(A)=\mathcal{D}(\mathcal{A}), \quad A=\mathbb{P}\mathcal{A},
\end{equation}
and
\begin{equation*}
O_S \begin{pmatrix}
w\\ \xi_1\\ \xi_2
\end{pmatrix}=\mathbb{P}\begin{pmatrix}
(w^S\cdot \nabla)w-(\nabla w^S)^*w\\ -A_1^{-1}\left( (\mathcal{L}^1)^*)(\nabla w)+(\mathcal{L}^{2,1})^*w+(\mathcal{L}^3)^*w-((\mathcal{L}^3)^*(\mathcal{T}\xi_2)\right) \\ -(\mathcal{L}^{2,2})^*w
\end{pmatrix}.
\end{equation*}
We precise here
$$
\mathcal{D}((A^*)^{1/2})=\mathbb{V}.
$$
From \cite{MR3962841}, $A$ is the infinitesimal generator of an analytical semigroup on $\mathbb{H}$. Then, from \cite[Corollaire 2.4, p.81]{PZ}, we only need to show that $\mathbb{V}\subset \mathcal{D}(O_S)$. In fact, let $(w,\xi_1,\xi_2)\in\mathbb{V}$ and $(\phi,\eta_1,\eta_2)\in\mathbb{H}$.
First, we use \eqref{s6} and we observe that for $\eta_2\in H^1(\omega)\cap L^2_0(\omega)$ 
\begin{multline}
-\int_{\omega}(\mathcal{L}^{2,2})^*w\cdot \eta_2 \ ds=-\int_{\Omega}w\cdot\mathcal{L}^{2,2}(\eta_2) \ dy=-\int_{\Omega}w\cdot (Q^7\eta_2+Q^8\nabla\eta_2) \ dy\\
=-\int_{\Omega}w\cdot Q^7\eta_2 \ dy-\int_{\Omega}w\cdot Q^8\nabla\eta_2 \ dy.
\end{multline}
By integration by parts, we find that
$$
\left| \int_{\Omega}w\cdot Q^7\eta_2 \ dy+\int_{\Omega}w\cdot Q^8\nabla\eta_2 \ dy\right|\leq C\left\|w\right\|_{[H^1(\Omega)]^3}\left\|\eta_2\right\|_{L^2(\omega)}. 
$$
Then, by density, we get
\begin{multline}
\left\langle O_S\begin{pmatrix}
w\\ \xi_1\\ \xi_2
\end{pmatrix},\begin{pmatrix}
\varphi\\ \eta_1\\ \eta_2
\end{pmatrix} \right\rangle _{\mathbb{H}}\leq C\bigg( \left\|w\right\|_{[H^1(\Omega)]^3} \left( \left\|\varphi\right\|_{[L^2(\Omega)]^3}+\left\|A^{1/2}\eta_1\right\|_{L^2(\omega)} \right)\\+\left\|A^{1/2}\eta_1\right\|_{L^2(\omega)}\left\|\xi_2\right\|_{L^2(\omega)}+ \left\|w\right\|_{[H^1(\Omega)]^3}\left\|\eta_2\right\|_{L^2(\omega)}\bigg) \\\leq C \left\| (w,\xi_1,\xi_2)\right\|_{\mathbb{V}}\left\| (\varphi,\eta_1,\eta_2)\right\|_{\mathbb{H}}.  
\end{multline}
 We deduce that $A_S$ is the infinitesimal generator of an analytical semigroup on $\mathbb{H}$. Moreover, $A_S$ admits a compact resolvent.
\end{proof}

Let
$$
\mathcal{V}_n^{s}(\Gamma_0)=\{v\in H^s(\Gamma_0)\ ; \ \int_{\Gamma_0}v\cdot n \ d\Gamma=0 \},\quad s\geq 0.
$$
From \cite{RaymondStokes}, the system  \eqref{oseen2}, \eqref{oseenb2}, \eqref{oseenc} is equivalent to
\begin{equation}
\label{dyn}
\begin{array}{cccc}
\mathbb{P}W'&=& A_S\mathbb{P}W+\mathbb{P}F+Bv, & \mathbb{P}W(0)=\mathbb{P}W^0,\\
(\Id-\mathbb{P})W&=&(\Id-\mathbb{P})D(\mathbb{M}v),
\end{array}
\end{equation}
with
$$\xi_1=\xi,\quad \xi_2=\partial_t\xi,\quad
W=\begin{pmatrix}
w\\ \xi_1\\ \xi_2
\end{pmatrix},\quad F=\begin{pmatrix}
f\\ 0\\ h
\end{pmatrix},\quad
W^0=\begin{pmatrix}
w^0\\ \xi^0\\ \xi^1
\end{pmatrix},
$$
where $D\in\mathcal{L}(\mathcal{V}_n^{0}(\Gamma_0),[L^2(\Omega)]^3\times\mathcal{D}(A_1^{1/2})\times L^2_0(\omega))$ such that $Dv=(\overline{w},\eta_1,\eta_2)$ verifies the system 
\begin{equation*}
\left\{
\begin{array}{rl}
\lambda_0 \overline{w}-\nabla\cdot(\mathbb{T}(\overline{w},\pi)) +(w^S\cdot \nabla)\overline{w}+(\overline{w}\cdot\nabla) w^S-\nabla\cdot\mathcal{L}^1(\eta_1)+\mathcal{L}^2(\eta_1,\eta_2)=0& \text{in}\ \Omega,\\
\nabla \cdot \overline{w} =0& \text{in}\ \Omega,\\
\lambda_0\eta_1-\eta_2=0& \text{in}\ \omega,\\
\lambda_0\eta_2+A_1\eta_1+A_2\eta_2+\mathcal{T}^*(\mathbb{T}(\overline{w},\pi)n+\mathcal{L}^1(\eta_1)n)=0& \text{in}\ \omega,\\
\relax[\overline{w}-\mathcal{T}\eta_2\relax]_{n}=1_{\Gamma_0}v_n & \text{on} \ \partial\Omega,\\
\left[ 2\nu D(\overline{w}) n + \beta (\overline{w}-\mathcal{T}\eta_2)+\mathcal{L}^1(\eta_1)n+\mathcal{L}^3(\eta_1)\right]_{\tau}=1_{\Gamma_0}(\beta v_{\tau}) &\text{on} \ \partial\Omega.
\end{array}
\right.
\end{equation*}
with $\lambda_0 \in \rho(A_S)$, and the operator $B$ is defined by
$$
B:v\in\mathbb{U}=[H^{3/2}(\Gamma_0)]^3\longrightarrow (\lambda_0\Id-A_S)\mathbb{P}D(\mathbb{M}v)\in (\mathcal{D}(A_S^*))'. 
$$
We notice that 
$$
(\lambda_0\Id-A_S)^{-1+\varepsilon}B \in \mathcal{L}([H^{3/2}(\Gamma_0)]^3,\mathbb{H}),\quad \forall\varepsilon \in (0,1/4).
$$
The adjoint operator $D^*\in\mathcal{L}([L^2(\Omega)]^3\times\mathcal{D}(A_1^{1/2})\times L^2_0(\omega),[L^2(\Gamma_0)]^3)$ is defined for all $(\overline{f},\overline{g},\overline{h})\in [L^2(\Omega)]^3\times\mathcal{D}(A_1^{1/2})\times L^2_0(\omega)$ by
$$
\left\{
\begin{array}{c}
D^*(\overline{f},\overline{g},\overline{h})=\left. \left(  (\mathbb{T}(\phi,r)n)_{n}+(\mathbb{T}(\phi,r)n)_{\tau}\right)\right| _{\Gamma_0} ,\quad \text{  if } \beta_1>0,\\
D^*(\overline{f},\overline{g},\overline{h})=\left. (\mathbb{T}(\phi,r)n)_{n}\right| _{\Gamma_0} ,\quad \text{  if } \beta_1=0,
\end{array}
\right.
$$ 
such that $(\phi,r,\zeta_1,\zeta_2)\in [H^2(\Omega)]^3\times H^1(\Omega)/\mathbb{R}\times\mathcal{D}(A_1)\times\mathcal{D}(A_1^{1/2})$ is solution of the system
\begin{equation*}
\left\{
\begin{array}{rl}
\lambda_0 \phi-\nabla\cdot(\mathbb{T}(\phi,r))-(w^S\cdot \nabla)\phi+(\nabla w^S)^*\phi=\overline{f}& \text{in}\ \Omega,\\
\nabla \cdot \phi =0& \text{in}\ \Omega,\\
\lambda_0\zeta_1+\zeta_2 +A_1^{-1}\left( ((\mathcal{L}^1)^*)(\nabla \phi)+(\mathcal{L}^{2,1})^*\phi+(\mathcal{L}^{3})^*\phi-(\mathcal{L}^{3})^*(\mathcal{T}\zeta_2)\right) =\overline{g}& \text{in}\ \omega,\\
\lambda_0\zeta_2-A_1\zeta_1+A_2\zeta_2+\mathcal{T}^*(\mathbb{T}(\phi,r)n) +(\mathcal{L}^{2,2})^*\phi=\overline{h}& \text{in}\ \omega,\\
(\phi-\mathcal{T}\zeta_2)_{n}=0& \text{on} \ \partial\Omega,\\
\left[ 2\nu D(\phi) n + \beta (\phi-\mathcal{T}\zeta_2)\right]_{\tau}=0 &\text{on} \ \partial\Omega.
\end{array}
\right.
\end{equation*}
We precise that $(\phi,\zeta_1,\zeta_2)$ is solution of the system
\begin{equation}
(\lambda_0-A_S^*)(\phi,\zeta_1,\zeta_2)=\mathbb{P}(\overline{f},\overline{g},\overline{h}).
\end{equation}
If $\beta_1>0$, the operator $B^*$ is given by 
$$
B^*(\phi,\zeta_1,\zeta_2)=\left. m\left(  (\mathbb{T}(\phi,r)n)\cdot n-\int_{\Gamma_0}m\mathbb{T}(\phi,r)n\cdot n \ d\Gamma\right)n +m(\mathbb{T}(\phi,r)n)_{\tau}\right| _{\Gamma_0},
$$
and if $\beta_1=0$, we get
$$
B^*(\phi,\zeta_1,\zeta_2)=\left. m\left(  (\mathbb{T}(\phi,r)n)\cdot n-\int_{\Gamma_0}m\mathbb{T}(\phi,r)n\cdot n \ d\Gamma\right)n \right| _{\Gamma_0},
$$
where $r\in H^1(\Omega)/\mathbb{R}$, is the solution of the problem
\begin{equation}
\begin{pmatrix}
\nabla r\\ 0 \\ -\mathcal{T^*}(rn)
\end{pmatrix}=(I-\mathbb{P})\mathcal{A}^*_S\begin{pmatrix}
\phi\\ \zeta_1 \\ \zeta_2
\end{pmatrix}.
\end{equation}
Now, we establish a first result of stabilization of the linear system \eqref{oseen2}, \eqref{oseenb2} and \eqref{oseenc}. To do so, it suffices to verify that $(A_S,B)$ satisfies the Fattorini-hautus criterion \eqref{UCstab*} and to apply \cite[Theorem 1.1 ]{hal-02545562}.

Let $\sigma>0$ and $ \lambda\in\mathbb{C}$ such that
$\Re\lambda\geq -\sigma $. Let $(\phi,r,\zeta_1,\zeta_2)$ solution of the system
\begin{equation*}
\left\{
\begin{array}{rl}
\lambda \phi-\nabla\cdot(\mathbb{T}(\phi,r))-(w^S\cdot \nabla)\phi+(\nabla w^S)^*\phi=0& \text{in}\ \Omega,\\
\nabla \cdot \phi =0& \text{in}\ \Omega,\\
\lambda\zeta_1+\zeta_2 +A_1^{-1}\left( ((\mathcal{L}^1)^*)(\nabla \phi)+(\mathcal{L}^{2,1})^*\phi-(\mathcal{L}^{3})^*\phi+(\mathcal{L}^{3})^*(\mathcal{T}\zeta_2)\right)=0& \text{in}\ \omega,\\
\lambda\zeta_2-A_1\zeta_1+A_2\zeta_2+\mathcal{T}^*(\mathbb{T}(\phi,r)n) +(\mathcal{L}^{2,2})^*\phi=0& \text{in}\ \omega,\\
(\phi-\mathcal{T}\zeta_2)_{n}=0& \text{on} \ \partial\Omega,\\
\left[ 2\nu D(\phi) n + \beta (\phi-\mathcal{T}\zeta_2)\right]_{\tau}=0 &\text{on} \ \partial\Omega.
\end{array}
\right.
\end{equation*}
and assume that $B^*(\phi,\zeta_1,\zeta_2)=0$, it implies that for $\beta_1\geq 0$, we have
\begin{equation}
\label{autus}
m\left(  (\mathbb{T}(\phi,r)n)\cdot n-\int_{\Gamma_0}m\mathbb{T}(\phi,r)n\cdot n \ d\Gamma\right)n +m(\mathbb{T}(\phi,r)n)_{\tau}=0.
\end{equation}
We set
$$
c(\phi,r)=\int_{\Gamma_0}m\mathbb{T}(\phi,r)n\cdot n \ d\Gamma.
$$
The equation \eqref{autus} becomes
\begin{equation}
\label{autus1}
m (\mathbb{T}(\phi,r-c(\phi,r))n)_{n} +m(\mathbb{T}(\phi,r-c(\phi,r))n)_{\tau}=0.
\end{equation}
We deduce that 
$$
\mathbb{T}(\phi,r-c(\phi,r))n=0,\ \text{ on } \Gamma_0.
$$
Then, from \cite{FabreLebeau}, we get
$$
\phi=0,\quad \text{in } \Omega,\quad r= c(\phi,r),\quad \text{on } \Omega.
$$
In particular
\begin{equation}
\mathbb{T}(\phi,r-c(\phi,r))n=0,\ \text{on } \partial\Omega.
\end{equation}
Thus, we obtain
\begin{equation}
\label{msk}
\mathcal{T}^*(\mathbb{T}(\phi,r)n)=-c(\phi,r)\mathcal{T}^*(n),\ \text{on } \partial\Omega.
\end{equation}
From the definition of the operator $\mathcal{T}^*$ given in \eqref{t*}, we have that $\mathcal{T}^*(n)=0$. Then, from \eqref{msk}, we obtain
 \begin{equation}
\label{msk1}
\mathcal{T}^*(\mathbb{T}(\phi,r)n)=0,\ \text{on } \partial\Omega.
\end{equation}
Moreover, we have $\mathcal{T}\zeta_2=0$ on $\partial\Omega$, then $\zeta_2=0$. The equation \eqref{msk1} implies that $A_1\zeta_1=0$ and taking into account the periodicity of $\zeta_1$, we find $\zeta_1=0$. Then, the condition \eqref{UCstab*} is verified. We deduce then the following theorem.
\begin{Theorem}\label{thmain1}
Let $t_0>0$, $\sigma>0$ and $W^0\in \mathbb{V}$. Then, there exists $N_+\in\mathbb{N}^*$, $K\in L^\infty_{\rm loc}(\mathbb{R}^2;\mathcal{L}(\mathbb{H}))$, $(\phi_k,\zeta^k_1,\zeta_2^k)\in \mathcal{D}(A_S^*)$ and $v_k\in B^*\left(\mathcal{D}(A_S^*)\right)$, $k=1,\ldots,N_+$, 
such that
\begin{equation}
\label{fb2}
v(t)=1_{[t_0,+\infty)}(t) \sum_{k=1}^{N_+} \left(W(t-t_0)+\int_0^{t-t_0} K(t-t_0,s)W(s) \ ds,\begin{pmatrix}
\phi_k\\\zeta^k_1\\\zeta_2^k
\end{pmatrix}\right)_{\mathbb{H}} v_k,
\end{equation}
stabilizes exponentially the system \eqref{dyn} and we have
\begin{equation}\label{ijt201}
\|W\|_{L^2_\sigma (0,\infty;\mathcal{D}(A_S))\cap C^0_\sigma ([0,\infty);\mathbb{V})\cap H^1_\sigma (0,\infty;\mathbb{H})}
\leq C\left(\|W^0\|_{\mathbb{V}} + \|F\|_{L^2_{\sigma}(0,\infty;\mathbb{H})}\right).
\end{equation}
\end{Theorem}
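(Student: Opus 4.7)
The plan is to apply Theorem~1.1 of \cite{hal-02545562}, which constructs finite-dimensional feedback controls with input delay for parabolic systems satisfying the Fattorini--Hautus criterion. At this stage, nearly all the hypotheses have been checked in the preceding propositions and discussion: the operator $A_S$ generates an analytic semigroup on $\mathbb{H}$, has compact resolvent (so its spectrum is a discrete set of eigenvalues of finite multiplicity), the control operator satisfies $(\lambda_0 \Id - A_S)^{-1+\varepsilon} B \in \mathcal{L}(\mathbb{U}, \mathbb{H})$ for $\varepsilon \in (0, 1/4)$ (hence $B$ is admissible thanks to analyticity), and the Fattorini--Hautus condition \eqref{UCstab*} has just been established using the unique continuation property for the Oseen system with Neumann boundary condition (via \cite{FabreLebeau}) together with the fact that $\mathcal{T}^*(n) = 0$.

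First, I would use the compactness of the resolvent and analyticity to perform a spectral decomposition $\mathbb{H} = \mathbb{H}_u \oplus \mathbb{H}_s$, where $\mathbb{H}_u$ is the finite-dimensional spectral subspace corresponding to the (finitely many) eigenvalues $\lambda$ of $A_S$ with $\Re\lambda > -\sigma$, and $\mathbb{H}_s$ is its complement; set $N_+ := \dim \mathbb{H}_u$. On $\mathbb{H}_s$, the semigroup generated by the restriction of $A_S$ decays at rate at least $\sigma$. Dually, $A_S^*$ admits a family $(\phi_k, \zeta_1^k, \zeta_2^k)_{k=1}^{N_+}$ of generalized eigenvectors spanning the corresponding unstable spectral subspace $\mathbb{H}_u^*$ of $A_S^*$, and these elements belong to $\mathcal{D}(A_S^*)$.

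Second, I would use \eqref{UCstab*} to deduce Kalman-type controllability of the finite-dimensional pair $(A_S|_{\mathbb{H}_u}, \pi_u B)$, where $\pi_u$ is the spectral projection onto $\mathbb{H}_u$. By a standard finite-dimensional argument one can then choose $v_k \in B^*(\mathcal{D}(A_S^*))$ such that the matrix with entries $\langle B v_j, (\phi_k, \zeta_1^k, \zeta_2^k) \rangle_{\mathbb{H}}$ is invertible, yielding a non-delayed stabilizing feedback for the unstable projection. To incorporate the delay $t_0$, I would apply the Artstein-type reduction used in \cite{hal-02545562}: the kernel $K(t,s) \in L^\infty_{\rm loc}(\mathbb{R}^2; \mathcal{L}(\mathbb{H}))$ is constructed so that the predictor $W(t-t_0) + \int_0^{t-t_0} K(t-t_0,s) W(s)\,ds$ pairs with $(\phi_k, \zeta_1^k, \zeta_2^k)$ to reproduce, on $\mathbb{H}_u$, a delay-free stabilizing feedback; on $\mathbb{H}_s$ no intervention is needed. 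The resulting control is of the required form \eqref{fb2} and, once substituted into \eqref{dyn}, produces a closed-loop system whose maximal-regularity estimate on the analytic scale of $A_S$ yields \eqref{ijt201}.

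The main technical obstacle has already been overcome in the preceding verification of \eqref{UCstab*}; beyond that, the principal points to be careful about are the finite-dimensional controllability argument in the presence of generalized (non-simple) eigenvalues of $A_S$ on $\Re\lambda = -\sigma$ (handled by perturbing $\sigma$ slightly if necessary so that no eigenvalue lies on the line $\Re\lambda = -\sigma$), and the regularity of the kernel $K$, which is automatic because it is built from the finite-dimensional semigroup $e^{t A_S|_{\mathbb{H}_u}}$ composed with the bounded operator $\pi_u B$. The energy estimate \eqref{ijt201} then follows from the maximal $L^2$-regularity of the analytic semigroup generated by $A_S$, applied to the closed-loop equation with source $\mathbb{P}F + Bv$ and initial datum $\mathbb{P}W^0 \in \mathbb{V} = \mathcal{D}((\lambda_0 - A_S^*)^{1/2})$.
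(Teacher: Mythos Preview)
Your proposal is correct and follows essentially the same approach as the paper: once the analyticity and compact resolvent of $A_S$, the admissibility of $B$, and the Fattorini--Hautus criterion \eqref{UCstab*} have been established, the paper simply invokes \cite[Theorem~1.1]{hal-02545562} to deduce the theorem. Your additional sketch of the spectral decomposition, the Artstein-type predictor, and the maximal-regularity estimate merely unpacks the content of that cited result, which the paper treats as a black box.
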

Finally, using \eqref{ijt201}, \eqref{estim*} and \eqref{fh}, we obtain the following corollary.
\begin{Corollary}
Let $\gamma_0>0$. For all $\gamma\in [0,\gamma_0[$, the solution $(\widetilde{w},\widetilde{q},\xi)$ of the system \eqref{oseenl}, \eqref{oseenbl}, \eqref{nav5c} verifies the estimate
\begin{multline}
\label{es1}
\left\| (\widetilde{w},\widetilde{q},\xi)\right\|_{\mathcal{X}_{\infty,\gamma}}
\leq C\bigg( \left\|(w^0,\xi^0,\xi^1) \right\|_{\mathbb{V}}+\left\| \widetilde{f}\right\|_{L_{\gamma}^2(0,+\infty;[L^2(\Omega)]^3)}+\left\| \widetilde{h}\right\|_{L_{\gamma}^2(0,+\infty;L^2(\omega))}\\+\left\|\widetilde{g}\right\|_{W_{\gamma}^{1/4}(0,+\infty;[H^{1/2}(\partial\Omega)]^3,[L^2(\partial\Omega)]^3)} \bigg).
\end{multline}
\end{Corollary}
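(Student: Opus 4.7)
The plan is to recover the estimate \eqref{es1} by exploiting the decomposition $\widetilde{w}=w+\widetilde{v}$, $\widetilde{q}=q+\widetilde{\pi}$ that was set up just before system \eqref{oseen2}. With this splitting, $(\widetilde{v},\widetilde{\pi})$ solves the lifting problem \eqref{sys3*} with boundary datum $\widetilde{g}$, while $(w,q,\xi)$ solves \eqref{oseen2}--\eqref{oseenc} with homogeneous tangential stress and source terms $(f,h)$ given by \eqref{fh}. The Corollary then follows by applying \cref{lem} to the lifted part and \cref{thmain1} to the reduced evolution problem, and then summing the two contributions.

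First I would invoke \cref{lem} with the given $\gamma_0$ and any $\gamma\in[0,\gamma_0)$. This immediately yields
\begin{equation*}
\|\widetilde{v}\|_{W_\gamma(0,\infty;[H^2(\Omega)]^3,[L^2(\Omega)]^3)}
+\|\nabla\widetilde{\pi}\|_{L^2_\gamma(0,\infty;[L^2(\Omega)]^3)}
\leq C\|\widetilde{g}\|_{W_\gamma^{1/4}(0,\infty;[H^{1/2}(\partial\Omega)]^3,[L^2(\partial\Omega)]^3)}.
\end{equation*}
Using this together with $w^S\in [W^{2,\infty}(\Omega)]^3$, each of the correction terms $(w^S\cdot\nabla)\widetilde{v}$, $(\widetilde{v}\cdot\nabla)w^S$ and $\gamma_0\widetilde{v}$ appearing in $f$ is bounded in $L^2_\gamma(0,\infty;[L^2(\Omega)]^3)$ by the same right-hand side. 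Similarly, the trace inequality $\mathbb{T}(\widetilde{v},\widetilde{\pi})n\in L^2_\gamma(0,\infty;[H^{1/2}(\partial\Omega)]^3)$ controls $\mathcal{T}^*(\mathbb{T}(\widetilde{v},\widetilde{\pi})n)$ in $L^2_\gamma(0,\infty;L^2_0(\omega))$. Thus
\begin{equation*}
\|f\|_{L^2_\gamma(0,\infty;[L^2(\Omega)]^3)}+\|h\|_{L^2_\gamma(0,\infty;L^2_0(\omega))}
\leq C\big(\|\widetilde{f}\|_{L^2_\gamma}+\|\widetilde{h}\|_{L^2_\gamma}+\|\widetilde{g}\|_{W_\gamma^{1/4}}\big).
\end{equation*}

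Next I would apply \cref{thmain1} with $\sigma=\gamma$ to the reformulated system \eqref{dyn}, with source $F=(f,0,h)$ and initial datum $W^0=(w^0,\xi^0,\xi^1)\in\mathbb{V}$. The resulting bound \eqref{ijt201} controls $W=(w,\xi,\partial_t\xi)$ in $L^2_\gamma(0,\infty;\mathcal{D}(A_S))\cap C^0_\gamma([0,\infty);\mathbb{V})\cap H^1_\gamma(0,\infty;\mathbb{H})$. Unpacking the domains via the embeddings $\mathcal{D}(A_S)\hookrightarrow [H^2(\Omega)]^3\times\mathcal{D}(A_1)\times\mathcal{D}(A_1^{1/2})$ and $\mathbb{V}\hookrightarrow [H^1(\Omega)]^3\times\mathcal{D}(A_1^{3/4})\times\mathcal{D}(A_1^{1/4})$, together with $\mathcal{D}(A_1^{3/4})\hookrightarrow H^3(\omega)$ and $\mathcal{D}(A_1^{1/4})\hookrightarrow H^1(\omega)$, recovers every term in \eqref{tak3.14} that involves $w$ and $\xi$.

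It only remains to recover the pressure $q$, which does not appear in the abstract evolution \eqref{dyn}. I would extract $q$ from the momentum equation by writing
\begin{equation*}
\nabla q=\nu\Delta w-\partial_tw-(w^S\cdot\nabla)w-(w\cdot\nabla)w^S+\nabla\cdot\mathcal{L}^1(\xi)-\mathcal{L}^2(\xi,\partial_t\xi)+f,
\end{equation*}
and observe that, thanks to the bounds just obtained on $w$, $\xi$, $\partial_t\xi$ together with the continuity of $\mathcal{L}^1,\mathcal{L}^2$ used in the proof of \cref{acc}, the right-hand side lies in $L^2_\gamma(0,\infty;[L^2(\Omega)]^3)$, so that $q\in L^2_\gamma(0,\infty;H^1(\Omega)/\mathbb{R})$. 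Adding back $\widetilde{v}$ and $\widetilde{\pi}$ via the triangle inequality and combining with the estimate from \cref{lem} yields \eqref{es1}. The only technical point is the careful verification of the embeddings and of the trace estimates for $\mathbb{T}(\widetilde{v},\widetilde{\pi})n$, which is routine once the Stokes regularity from \cref{lem} is available.
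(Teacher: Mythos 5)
Your proposal is correct and follows exactly the route the paper intends: the paper's entire justification is the sentence ``using \eqref{ijt201}, \eqref{estim*} and \eqref{fh}, we obtain the following corollary,'' i.e.\ the decomposition $\widetilde{w}=w+\widetilde{v}$, the lifting estimate of \cref{lem}, the bound on $(f,h)$ from \eqref{fh}, and \cref{thmain1} applied to \eqref{dyn}. You merely make explicit the details the paper suppresses (the embeddings unpacking $\mathcal{D}(A_S)$ and $\mathbb{V}$, and the recovery of $\nabla q$ from the momentum equation, which the abstract formulation \eqref{dyn} does not carry), all of which are consistent with the paper's argument.
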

\section{Fixed point}
\label{ptfix}
Let $\mathcal{Y}_\infty$ and $\mathcal{B}_{\infty,R}$ be given respectively by
\begin{equation}\label{tak6.3*}
\mathcal{Y}_\infty= L^2_{\gamma}(0,\infty;[L^2(\Omega)]^3)\times  W_{\gamma}^{1/4}(0,\infty;[H^{1/2}(\partial\Omega)]^3,[L^2(\partial\Omega)]^3)\times L^2_{\gamma}(0,\infty;L^2(\omega)).
\end{equation} 
\begin{equation}\label{tak6.2*}
\mathcal{B}_{\infty,R}=\{ (\widetilde{f},\widetilde{g},\widetilde{h})\in\mathcal{Y}_\infty \; |\; \left\| (\widetilde{f},\widetilde{g},\widetilde{h})\right\|_{\mathcal{Y}_\infty}\leq R \}.
\end{equation}
Let $(\widetilde{f},\widetilde{g},\widetilde{h})\in 	\mathcal{B}_{\infty,R}$ and let $(u,p,\xi)$ be the solution of the system \eqref{oseenl}, \eqref{oseenbl}, \eqref{nav5c} associated to $(\widetilde{f},\widetilde{g},\widetilde{h})$. From \eqref{es1}, we obtain
\begin{equation}
\label{es}
\left\|(u,p,\xi)\right\|_{\mathcal{X}_{\infty,\gamma}}\leq CR,
\end{equation}
where we supposed that
$$
\left\|(w^0,\xi^0,\xi^1) \right\|_{\mathbb{V}}\leq R.
$$
From \eqref{es}, we have the following estimates
\begin{multline}
\label{im1infty*}
\left\|\xi\right\|_{L_\gamma^\infty(0,\infty;L^\infty(\omega))}
+
\left\|\partial_{s_j}\xi\right\|_{L_\gamma^\infty(0,\infty;L^\infty(\omega))} 
+\left\|\partial_{s_js_k}^2\xi\right\|_{L_\gamma^\infty(0,\infty;L^2(\omega))} 
\\+\left\|\partial_{s_js_ks_i}^3\xi\right\|_{L_\gamma^\infty(0,\infty;L^2(\omega))}\leq C\left\|\xi \right\|_{L_\gamma^\infty(0,\infty;H^3(\omega))} \leq C R,
\end{multline}
\begin{equation}
\label{im1infty*1}
\left\|\partial_t\xi\right\|_{L_\gamma^4(0,\infty;L^\infty(\omega))}
+\left\|\partial^2_{ts_j}\xi\right\|_{L_\gamma^6(0,\infty;L^2(\omega))}\leq C R,
\end{equation}
\begin{equation}
\label{im1infty**}
\left\|\xi\right\|_{H^{7/8}_\gamma(0,\infty;L^\infty(\omega))}
+
\left\|\partial_{s_j}\xi\right\|_{H^{7/8}_\gamma(0,\infty;L^\infty(\omega))} 
+\left\|\partial_{s_js_k}^2\xi\right\|_{H^{7/8}_\gamma(0,\infty;L^{8/3}(\omega))} 
\leq C R,
\end{equation}
\begin{equation}
	\label{im1infty***}
	\left\|\xi\right\|_{L^\infty_\gamma(0,\infty;H^2(\omega))}
	+
	\left\|\partial_{s_j}\xi\right\|_{L^\infty_\gamma(0,\infty;H^{3/2}(\omega))} 
	+\left\|\partial_{s_js_k}^2\xi\right\|_{L^\infty_\gamma(0,\infty;H^{1/2}(\omega))} 
	\leq C R.
\end{equation}
In particular, taking into account the condition \eqref{smallnes}, there exists $R_0>0$ such that, if $R\leq R_0$, then
\begin{equation}\label{denominfty*}
\left\| \frac{1}{1+\eta}\right\|_{L^\infty(0,\infty;L^\infty(\omega))}\leq C(R_0).
\end{equation}
In fact, if $1+\eta^S>\overline{\delta}$, then $R_0$ is chosen such that $0<R_0<\overline{\delta}$.

Here, we recall a well known result about product in Sobolev spaces.
\begin{Proposition}
\label{prod}
Let $s\geq0$, $s_1\geq s$ and $s_2\geq s$ such that $s_1+s_2>s+1/2$. Let $\mathfrak{X}_1$, $\mathfrak{X}_2$ and $\mathfrak{X}_3$ be three Banach spaces such that for all $f\in \mathfrak{X}_2 $ and $g\in \mathfrak{X}_3 $, we have
$$
\left\|fg\right\|_{ \mathfrak{X}_1} \leq C \left\|f\right\|_{ \mathfrak{X}_2} \left\|g\right\|_{ \mathfrak{X}_3}.
$$
Then
$$
\forall u\in H^{s_1}(0,\infty;\mathfrak{X}_2),\ \forall v\in H^{s_2}(0,\infty;\mathfrak{X}_3),\quad \left\| uv\right\|_{H^{s}(0,\infty;\mathfrak{X}_1)}\leq C \left\| u\right\|_{H^{s_1}(0,\infty;\mathfrak{X}_2)}\left\| v\right\|_{H^{s_2}(0,\infty;\mathfrak{X}_3)}.
$$
\end{Proposition}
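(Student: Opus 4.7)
The plan is to establish this product estimate by Fourier analysis on the whole real line. After a standard extension step, the hypothesized spatial inequality reduces the problem to a scalar weighted convolution estimate, so the Banach-space structure enters only through the pointwise norm inequality and plays no further role.

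First I would extend $u$ and $v$ to elements of $H^{s_1}(\mathbb{R};\mathfrak{X}_2)$ and $H^{s_2}(\mathbb{R};\mathfrak{X}_3)$ respectively using a standard Stein-type extension operator, with norms controlled by the original ones. On $\mathbb{R}$, the Sobolev norm $\|w\|_{H^s(\mathbb{R};X)}$ is equivalent to $\|(1+|\xi|^2)^{s/2}\widehat{w}(\xi)\|_{L^2(\mathbb{R};X)}$, and the product satisfies $\widehat{uv} = \widehat{u}*\widehat{v}$, where the convolution uses the bilinear map $\mathfrak{X}_2\times \mathfrak{X}_3 \to \mathfrak{X}_1$ supplied by the hypothesis. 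That hypothesis yields the pointwise bound
\[
\|\widehat{u}*\widehat{v}(\xi)\|_{\mathfrak{X}_1}\leq C(F*G)(\xi),
\]
where $F(\xi):=\|\widehat{u}(\xi)\|_{\mathfrak{X}_2}$ and $G(\xi):=\|\widehat{v}(\xi)\|_{\mathfrak{X}_3}$ are nonnegative scalar functions with $\|(1+|\cdot|^2)^{s_1/2}F\|_{L^2(\mathbb{R})}\sim \|u\|_{H^{s_1}(\mathbb{R};\mathfrak{X}_2)}$ and similarly for $G$.

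It thus suffices to prove the scalar weighted convolution inequality
\[
\bigl\|(1+|\xi|^2)^{s/2}(F*G)(\xi)\bigr\|_{L^2(\mathbb{R}_\xi)}\leq C\bigl\|(1+|\cdot|^2)^{s_1/2}F\bigr\|_{L^2(\mathbb{R})}\bigl\|(1+|\cdot|^2)^{s_2/2}G\bigr\|_{L^2(\mathbb{R})},
\]
which is the classical scalar Sobolev product estimate on the line. The standard proof uses Peetre's inequality $(1+|\xi|^2)^{s/2}\leq C_s\bigl[(1+|\xi-\eta|^2)^{s/2}+(1+|\eta|^2)^{s/2}\bigr]$ (valid because $s\geq 0$) to split the convolution integral into two symmetric pieces, followed by Cauchy--Schwarz in $\eta$. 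The critical threshold $s_1+s_2>s+1/2$ is exactly the condition that the residual weight $(1+|\eta|^2)^{s-s_1-s_2}$ (and its symmetric counterpart in $\xi-\eta$) be integrable in one dimension. The main technical obstacle is nothing beyond this weighted convolution computation, which is classical in the scalar setting and extends unchanged to the Banach-valued case since only pointwise norm estimates have been invoked throughout.
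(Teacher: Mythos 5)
Your overall architecture---extend from $(0,\infty)$ to $\mathbb{R}$, then prove a product theorem on the line---is the same route the paper takes (it extends by reflection and simply cites Theorem~2 of its reference for the product estimate on $\mathbb{R}$). Your scalar weighted convolution estimate is also correct: writing $\langle\xi\rangle:=(1+|\xi|^2)^{1/2}$, the Peetre splitting, Cauchy--Schwarz in $\eta$, and the uniform bound on $\int \langle\xi-\eta\rangle^{2(s-s_1)}\langle\eta\rangle^{-2s_2}\,d\eta$ under $s_1\geq s$, $s_2\geq s$ and $2(s_1+s_2-s)>1$ do yield the claimed inequality, and $s_1+s_2>s+1/2$ is indeed the sharp threshold.

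There is, however, a genuine gap at the very first reduction: the norm equivalence $\left\|w\right\|_{H^{s}(\mathbb{R};\mathfrak{X})}\sim\left\|\langle\xi\rangle^{s}\,\widehat{w}\right\|_{L^2(\mathbb{R};\mathfrak{X})}$ is \emph{not} available for a general Banach space $\mathfrak{X}$. The Fourier transform is bounded on $L^2(\mathbb{R};\mathfrak{X})$ if and only if $\mathfrak{X}$ is isomorphic to a Hilbert space (Kwapie\'n's theorem), so neither the control of $\left\|\langle\xi\rangle^{s_1}\widehat{u}\right\|_{L^2(\mathbb{R};\mathfrak{X}_2)}$ by $\left\|u\right\|_{H^{s_1}(\mathbb{R};\mathfrak{X}_2)}$ nor the recovery of $\left\|uv\right\|_{H^{s}(\mathbb{R};\mathfrak{X}_1)}$ from $\left\|\langle\xi\rangle^{s}\,\widehat{uv}\right\|_{L^2(\mathbb{R};\mathfrak{X}_1)}$ is justified. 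This is not a vacuous objection here: the proposition is applied in \cref{fff1} with $\mathfrak{X}_1=\mathfrak{X}_2=\mathfrak{X}_3=L^\infty(\omega)$ (and elsewhere with $L^{8/3}(\omega)$), which are not Hilbert spaces. Your closing claim that ``only pointwise norm estimates have been invoked'' is therefore inaccurate: the Plancherel step is a structural property of the target space, not a pointwise bound. The standard repair is to argue with the real-variable (Gagliardo--Slobodeckij) characterization of $H^{s}(\mathbb{R};\mathfrak{X})$: split $u(t)v(t)-u(t')v(t')$ by the Leibniz identity, estimate each piece using the hypothesis $\left\|fg\right\|_{\mathfrak{X}_1}\leq C\left\|f\right\|_{\mathfrak{X}_2}\left\|g\right\|_{\mathfrak{X}_3}$ applied to norms of differences, and invoke the Fourier-free Sobolev embeddings $H^{\sigma}(\mathbb{R};\mathfrak{X})\hookrightarrow L^{p}(\mathbb{R};\mathfrak{X})$; this is in substance what the scalar proof cited by the paper does, and it transfers to the Banach-valued setting precisely because it never uses orthogonality.
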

\begin{proof}
The proof is similar to the one given in \cite[Theorem 2]{MR500121} extending data by reflexion on $\mathbb{R}$.
\end{proof}
\begin{Lemma}
\label{fff1-} 
Let $X$ and $Y$ given in \eqref{chg2}. 
Then, we have
\begin{equation}
\label{ep1-}
\nabla Y(X)-I_3=\ell^{(1)}(\xi,\partial_s\xi)+\epsilon^{(1)}(\xi,\partial_s\xi),
\end{equation}
\begin{equation}
\label{ep21-}
\frac{\partial^2 Y_3}{\partial x_k\partial x_j}(X)=\ell^{(2)}(\xi, \partial_s\xi,\partial_{ss}^2\xi)+\epsilon^{(2)}(\xi,\partial_s\xi,\partial_{ss}^2\xi),
\end{equation}
\begin{equation}
\label{ep3-}
\frac{\partial^2 a_{ik}}{\partial x_m\partial x_l}(X)=\ell^{(3)}(\xi,\partial_s\xi,\partial^2_{ss}\xi,\partial^3_{sss}\xi)+\epsilon^{(3)}(\xi,\partial_s\xi,\partial^2_{ss}\xi,\partial^3_{sss}\xi),
\end{equation}
\begin{equation}
\label{ep4-}
\partial_tY=\ell^{(13)}(\partial_t\xi)+\epsilon^{(13)}(\xi,\partial_t\xi),
\end{equation}
\begin{equation}
\label{ep5-}
\partial_t a_{ik}(X)=\ell^{(14)}(\partial_t\xi,\partial^2_{ts}\xi)+\epsilon^{(14)}(\xi,\partial_t\xi,\partial^2_{ts}\xi),
\end{equation}
where the operators $l^{(i)}$ have the form \eqref{17:17} and $\epsilon^{(i)}$ are given formally by
$$
\epsilon^{(1)}(\xi,\partial_s\xi)= O(\xi^{m_1}(\partial_s\xi)^{m_2}),\quad m_1+m_2\geq 2,
$$
$$
\epsilon^{(2)}(\xi,\partial_s\xi,\partial^2_{ss}\xi)= O(\xi^m_1(\partial_s\xi)^{m_2}(\partial^2_{ss}\xi)^{m_3}),\quad m_1+m_2+m_3\geq 2,
$$
$$
\epsilon^{(3)}(\xi,\partial_s\xi,\partial^2_{ss}\xi,\partial^3_{sss}\xi)= O(\xi^m_1(\partial_s\xi)^{m_2}(\partial^2_{ss}\xi)^{m_3}(\partial^3_{sss}\xi)^{m_4}),\quad m_1+m_2+m_3+m_4\geq 2,
$$
$$
\epsilon^{(13)}(\xi,\partial_t\xi)= O(\xi^{m_1}(\partial_t\xi)^{m_2}),\quad m_1+m_2\geq 2,
$$
$$
\epsilon^{(14)}(\xi,\partial_t\xi,\partial^2_{ts}\xi)= O(\xi^{m_1}(\partial_t\xi)^{m_2}(\partial^2_{ts}\xi)^{m_3}),\quad m_1+m_2+m_3\geq 2.
$$
\end{Lemma}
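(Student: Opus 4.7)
The plan is a direct computation. Using \eqref{chg21}--\eqref{chg2}, each of the five quantities on the left-hand side is an explicit rational function of $\eta^S$, $\xi=\eta-\eta^S$, their spatial/time derivatives and of $y_3$. I would Taylor-expand each such rational function in $\xi$ around $\xi=0$, isolate the zeroth and first-order terms (the zeroth-order term is $0$ for all five quantities because $X=Y=\Id$ and $a=I_3$ when $\xi=0$), and call the first-order term $\ell^{(i)}$ and the higher-order remainder $\epsilon^{(i)}$.

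The basic building block is the scalar identity
\begin{equation*}
\frac{1}{1+\eta}=\frac{1}{1+\eta^S}-\frac{\xi}{(1+\eta^S)^2}+\xi^2\,\rho(\eta^S,\xi),
\end{equation*}
where $\rho$ is smooth and, by \eqref{denominfty*}, uniformly bounded on the region of interest. I would apply this together with the exact formula
\begin{equation*}
X_3(y)=\frac{1+\eta^S+\xi}{1+\eta^S}\,y_3,\qquad Y_3(x)=\frac{1+\eta^S}{1+\eta^S+\xi}\,x_3,\qquad \partial_t Y_3=-\frac{(1+\eta^S)\,\partial_t\xi}{(1+\eta)^2}\,x_3,
\end{equation*}
and the corresponding expressions for the only nontrivial entries of $\nabla Y$ (which are $\partial Y_3/\partial x_j$ for $j=1,2,3$) and their $x$-derivatives. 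Since the first two components of $X$ are simply $y_1,y_2$, evaluating $\nabla Y$ at $X$ amounts to substituting $y_3\mapsto\frac{1+\eta^S+\xi}{1+\eta^S}y_3$ inside the $x_3$-dependence; this substitution is itself affine in $\xi$ and so does not alter the structure of the expansion.

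For \eqref{ep1-} one reads off the linear part directly: the nonzero first-order terms are combinations of $\xi/(1+\eta^S)$ and $y_3\,\partial_{s_j}[\xi/(1+\eta^S)]=\alpha(\eta^S)\xi+\beta(\eta^S)\partial_{s_j}\xi$ with $W^{1,\infty}$ coefficients thanks to $\eta^S\in C^4(\omega)$, and the remainder is by construction a sum of terms of the form $c(\eta^S,y_3)\xi^{m_1}(\partial_s\xi)^{m_2}/(1+\eta)^{\alpha}$ with $m_1+m_2\geq 2$. Differentiating once more in $x_k=y_k$ (for $k=1,2$) and using $\partial/\partial x_3$ acting on the explicit $x_3$-dependence gives \eqref{ep21-}; two further spatial differentiations and the polynomial structure of the cofactor map $a=(\Cof\nabla Y)^*$ in the entries of $\nabla Y$ give \eqref{ep3-}, where the third derivative $\partial^3_{sss}\xi$ appears exactly once (linearly) because differentiating three times in space of the single factor $\xi$ produces $\partial^3_{sss}\xi$, while any product of two factors of $\xi$ differentiated three times produces only nonlinear contributions. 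The time-dependent identities \eqref{ep4-}, \eqref{ep5-} come from differentiating the formulas for $Y_3$ and for $a$ in $t$; the leading terms are linear in $\partial_t\xi$ (and in $\partial^2_{ts}\xi$ for the spatial derivatives of $a$), all other terms carrying at least one additional factor of $\xi$ or $\partial_s\xi$.

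There is essentially no hard step here: the only mild subtlety is bookkeeping, namely verifying that after every expansion the remainder admits a representation as a polynomial in $(\xi,\partial_s\xi,\partial^2_{ss}\xi,\partial^3_{sss}\xi,\partial_t\xi,\partial^2_{ts}\xi)$ divided by a power of $(1+\eta)$, with coefficients smooth in $(y,\eta^S)$. This form, together with \eqref{denominfty*} and the a priori control \eqref{im1infty*}--\eqref{im1infty***}, is what allows these remainders to be absorbed in the fixed-point argument; it is the only property of $\epsilon^{(i)}$ actually used in the rest of the paper, and it is immediate from Taylor's formula with integral remainder applied to each factor $1/(1+\eta^S+\xi)^{\alpha}$ appearing after differentiation.
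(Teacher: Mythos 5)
Your proposal is correct and follows essentially the same route as the paper: the paper's proof is exactly this direct computation, built on the same scalar expansion $\frac{1}{1+\eta}=\frac{1}{1+\eta^S}-\frac{\xi}{(1+\eta^S)^2}+\frac{\xi^2}{(1+\eta^S)^2(1+\eta)}$, followed by term-by-term identification of the linear parts $\ell^{(i)}$ and of the remainders $\epsilon^{(i)}$ as rational expressions in $(\xi,\partial_s\xi,\partial^2_{ss}\xi,\partial^3_{sss}\xi,\partial_t\xi,\partial^2_{ts}\xi)$ with denominators that are powers of $1+\xi+\eta^S$. Your formulas (including $\partial_t Y_3$, which agrees with the paper's $-y_3\partial_t\xi/(1+\eta)$ after composing with $X$) and your bookkeeping of where the top-order derivatives enter linearly match the paper's computation.
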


\begin{proof}
We recall that $\xi=\eta-\eta^S$, then we have
\begin{equation}
\label{17:37}
\frac{1}{1+\eta}=\left(\frac{1}{1+\eta^S}-\frac{\xi}{(1+\eta^S)^2} \right)+\frac{\xi^2}{(1+\eta^S)^2(1+\eta)}. 
\end{equation}
After standard calculations, 
$\nabla Y(X)-I_3$ writes
\begin{equation}
\label{tak4.6*}
\frac{\partial Y_3}{\partial x_3 }(X)-1=\frac{\eta^S-\eta}{1+\eta}=-\frac{\xi}{1+\eta^S}+\frac{\xi^2}{(1+\eta^S)^2}-\frac{\xi^3}{(1+\eta^S)^2(1+\eta)},
\end{equation}
\begin{equation}
\label{tak4.7*}
\frac{\partial Y_3}{\partial x_j}(X)=-y_3\frac{\partial_{s_j}\xi}{1+\eta^S}+y_3\partial_{s_j}\eta^S\frac{\xi}{(1+\eta)(1+\eta^S)}+y_3\partial_{s_j}\xi\frac{\xi}{(1+\eta)(1+\eta^S)},\quad \text{ $j=1,2$}.
\end{equation}
Using \eqref{17:37}, we obtain \eqref{ep1-} where $\epsilon^{(1)}(\xi,\partial_s\xi)$ contains the terms of the form
\begin{equation}
\label{sof1}
\gamma^1(\eta^S)\frac{\xi^{m_1}}{(1+\xi+\eta^S)^{\alpha_1}},\quad \gamma^2(\eta^S)\frac{\xi ^{m_2}\partial_s\xi}{(1+\xi+\eta^S)^{\alpha_2}},\quad m_1\geq 2,\ m_2 \geq 1,\ \alpha_i\geq 1,\ i=1,2,
\end{equation}
where $\gamma^i$, $i=1,2$ are functions of regularity $C^3(\omega)$.
We have for all $k,j\in \{1,2\}$,
\begin{multline}
\frac{\partial^2 Y_3}{\partial x_k\partial x_j}(X)
= -y_3\frac{\partial^2_{s_j s_k}\xi}{(1+\eta^S)}
+y_3\partial_{s_k}\eta\frac{\partial_{s_j}\xi}{(1+\eta)(1+\eta^S)}
+ y_3\partial_{s_j}\eta\frac{\partial_{s_k}\xi} {(1+\eta)(1+\eta^S)}
\\
+y_3\xi\left( \frac{\partial^2_{s_k s_j}\eta}{(1+\eta^S)(1+\eta)}-2\frac{\partial_{s_k}\eta\partial_{s_j}\eta}{(1+\eta^S)(1+\eta)^2}\right).
\end{multline}
Then, we deduce \eqref{ep21-} where $\epsilon^{(2)}(\xi,\partial_s\xi,\partial^2_{ss}\xi)$ had the terms
\begin{equation}
\label{sof2}
\gamma^3(\eta^S)\frac{\xi^{m_1}(\partial_s\xi)^{m_2}}{(1+\xi+\eta^S)^{\alpha_1}},\ m_1+m_2\geq 2,\quad \gamma^4(\eta^S)\frac{\xi^{m_3}\partial^2_{ss}\xi}{(1+\xi+\eta^S)^{\alpha_2}}, \ m_3\geq 1, \ \alpha_i\geq 1, \ i=1,2,
\end{equation}
where $\gamma^i$, $i=3,4$ are $C^2(\omega)$ functions.	
The third derivative $	\frac{\partial^3 Y_3}{\partial x_i\partial x_k\partial x_j}(X)$ admits the terms:
\begin{multline}
y_3\frac{\partial^3_{s_j s_ks_i}\xi}{(1+\eta^S)},\ y_3\frac{\partial_{s_i}\eta\partial^2_{s_js_k}\xi}{(1+\eta)(1+\eta^S)},\ y_3\frac{\partial^2_{s_is_k}\eta\partial_{s_j}\xi}{(1+\eta)(1+\eta^S)}, \ y_3\frac{\partial_{s_k}\eta\partial_{s_i}\eta\partial_{s_j}\xi}{(1+\eta)^2(1+\eta^S)},\\ y_3\frac{\partial^2_{s_js_k}\eta\partial_{s_i}\xi}{(1+\eta)^2(1+\eta^S)},\ y_3\frac{\xi\partial^3_{s_j s_ks_i}\eta }{(1+\eta^S)(1+\eta)}, \
y_3\frac{\xi\partial^2_{s_ks_j}\eta\partial_{s_i}\eta}{(1+\eta)^2(1+\eta^S)},\ y_3\frac{\xi\partial_{s_k}\eta\partial_{s_j}\eta\partial_{s_i}\eta}{(1+\eta)^4(1+\eta^S)}.
\end{multline}
Then, we get \eqref{ep3-}  where $\epsilon^{(3)}(\xi,\partial_s\xi,\partial^2_{ss}\xi,\partial^3_{sss}\xi)$ contains the terms
\begin{multline}
\label{sof3}
\gamma^5(\eta^S)\frac{\xi^{m_1}(\partial_s\xi)^{m_2}}{(1+\xi+\eta^S)^{\alpha_1}},\ \gamma^6(\eta^S)\frac{\xi^{n_1}(\partial_s\xi)^{n_2}\partial^2_{ss}\xi}{(1+\xi+\eta^S)^{\alpha_2}},\ \gamma^7(\eta^S)\frac{\xi^{m_3}\partial^3_{sss}\xi}{(1+\xi+\eta^S)^{\alpha_3}},\\ m_1+m_2\geq 2,\ n_1+n_2\geq 2, \ m_3\geq 1, \ \alpha_i\geq 1, \ i=1...3,
\end{multline}
with $\gamma^i$, $i=5,...,7$ are bounded functions on $\omega$.
	
We also have, 
$$
\partial_tY_3=-y_3\frac{\partial_t\xi}{1+\eta}.
$$
We deduce \eqref{ep4-} where $\epsilon^{(13)}(\xi,\partial_t\xi)$ contains the terms
\begin{equation}
\label{sof4}
\frac{\xi^{m_1}\partial_t\xi}{(1+\xi+\eta^S)^{\alpha_1}},\quad m_1\geq 1, \ \alpha_1\geq 1. 
\end{equation}	
The terms appearing in $\partial_t a_{ik}(X)$ have the form
$$
y_3\frac{\partial_t\xi\partial_{s_j}\eta}{(1+\eta)^2}, \quad
y_3\frac{\partial_t\xi\partial_{s_j}\eta^S}{(1+\eta)(1+\eta^S)},\quad
y_3\frac{\partial^2_{ts_j}\xi}{(1+\eta)},\quad 
-\frac{(1+\eta^S)\partial_t\xi}{(1+\eta)^2}. 
$$
Then, we get \eqref{ep5-} with $\epsilon^{(14)}(\xi,\partial_t\xi,\partial^2_{ts}\xi)$ contains the terms
\begin{multline}
\label{sof5}
\gamma^8(\eta^S)\frac{\xi^{m_1} \partial^2_{ts}\xi}{(1+\xi+\eta^S)^{\alpha_1}},\ \gamma^9(\eta^S)\frac{\xi^{m_2}\partial_t\xi }{(1+\xi+\eta^S)^{\alpha_2}},\ \gamma^{10}(\eta^S)\frac{\xi^{m_3}\partial_t\xi\partial_s\xi}{(1+\xi+\eta^S)^{\alpha_3}},\\ m_1\geq1, \  m_2\geq1, \  m_3\geq0,\ \alpha_i\geq 1, \ i=1,...,3,
\end{multline}
where $\gamma^i$, $i=8,...,10$ are $C^3(\omega)$ functions.
\end{proof}
Let suppose that $R<1$.
\begin{Lemma}
\label{fff1} 
Let $\epsilon^{(i)}$ given in \cref{fff1-} and let
\begin{equation}
\label{re}
(w^S,p^S,\eta^S)\in [W^{2,\infty}(\Omega)]^3\times W^{1,\infty}(\Omega)\times C^4(\omega).
\end{equation} 
Then, we have
\begin{multline}
\label{ep11}
\left\| \epsilon^{(1)}(\xi,\partial_s\xi)\right\|_{L^\infty_\gamma(0,\infty;[L^\infty(\Omega)]^9)}+
\left\| \epsilon^{(2)}(\xi,\partial_s\xi,\partial_{ss}^2\xi)\right\|_{L^\infty_\gamma(0,\infty;L^2(\Omega))}
\\+
\left\| \epsilon^{(3)}(\xi,\partial_s\xi,\partial^2_{ss}\xi,\partial^3_{sss}\xi)\right\|_{L^\infty_\gamma(0,\infty;L^2(\Omega))}+
\left\|\epsilon^{(13)}(\xi,\partial_t\xi)\right\|_{L^4_\gamma(0,\infty;[L^\infty(\Omega)]^3)}\\+
\left\| \epsilon^{(14)}(\xi,\partial_t\xi,\partial^2_{ts}\xi)\right\|_{L^6_\gamma(0,\infty;L^2(\Omega))}\leq C R^2,
\end{multline}
\begin{multline}
\label{ep12}
\left\| \epsilon^{(1)}(\xi,\partial_s\xi)\right\|_{L^\infty_\gamma(0,\infty;[H^{3/2}(\partial\Omega)]^9)}+
\left\| \epsilon^{(2)}(\xi,\partial_s\xi,\partial_{ss}^2\xi)\right\|_{L^\infty_\gamma(0,\infty;H^{1/2}(\partial\Omega))}
\\+\left\| \epsilon^{(1)}(\xi,\partial_s\xi)\right\|_{H^{7/8}_\gamma(0,\infty;[L^\infty(\partial\Omega)]^9)}+
\left\| \epsilon^{(2)}(\xi,\partial_s\xi,\partial_{ss}^2\xi)\right\|_{H^{7/8}_\gamma(0,\infty;L^{8/3}(\partial\Omega))}\leq C R^2.
\end{multline}
\end{Lemma}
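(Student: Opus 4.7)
The plan is to use the explicit monomial description of each $\epsilon^{(i)}$ obtained in the proof of Lemma \ref{fff1-} (see \eqref{sof1}--\eqref{sof5}): every summand has the form
$$
\gamma^{(j)}(\eta^S)\,\frac{\xi^{a_1}(\partial_s\xi)^{a_2}(\partial^2_{ss}\xi)^{a_3}(\partial^3_{sss}\xi)^{a_4}(\partial_t\xi)^{a_5}(\partial^2_{ts}\xi)^{a_6}}{(1+\xi+\eta^S)^{\alpha}},
$$
with $\sum_{k}a_k\geq 2$, $\alpha\geq 1$, and $\gamma^{(j)}(\eta^S)$ of regularity at least $W^{1,\infty}(\omega)$, in fact $C^3(\omega)$ by \eqref{re}. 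Two general observations reduce the estimates to elementary bookkeeping: first, the denominator is uniformly controlled, since \eqref{denominfty*} together with $R\le R_0$ gives $\|(1+\xi+\eta^S)^{-\alpha}\|_{L^\infty_\gamma(0,\infty;L^\infty(\omega))}\le C(R_0)$; second, the coefficient $\gamma^{(j)}(\eta^S)$ is a fixed smooth function and passes through all the norms below as a multiplier.

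For the estimates in \eqref{ep11}, I would distribute the exponential weight onto one factor and leave the others in unweighted norms (which are bounded since $\gamma>0$ and $R<1$). For the $L^\infty_\gamma L^\infty$-bound on $\epsilon^{(1)}$, writing $e^{\gamma t}\xi^{a_1}(\partial_s\xi)^{a_2}=(e^{\gamma t}\xi)\,\xi^{a_1-1}(\partial_s\xi)^{a_2}$ (or with the weight placed on a $\partial_s\xi$ factor if $a_1=0$) and applying \eqref{im1infty*} gives $\le CR^{a_1+a_2}\le CR^2$. The other four bounds in \eqref{ep11} are handled identically: the highest-derivative factor is placed in the target space using \eqref{im1infty*}--\eqref{im1infty*1}, \eqref{im1infty***} (namely $\partial^2_{ss}\xi,\partial^3_{sss}\xi\in L^\infty_\gamma L^2$, $\partial_t\xi\in L^4_\gamma L^\infty$, $\partial^2_{ts}\xi\in L^6_\gamma L^2$), while the remaining factors are placed in $L^\infty L^\infty$ via \eqref{im1infty*}. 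Since the total number of $\xi$-type factors is $\ge 2$ and each is bounded by $CR$, the prefactor is $CR^{\sum a_k}\le CR^2$.

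For the fractional-in-time and boundary-trace estimates \eqref{ep12}, I would invoke Proposition \ref{prod} iteratively. Each monomial is split as $A\cdot B$, where $A$ is a single factor $\xi$, $\partial_s\xi$ or $\partial^2_{ss}\xi$ that carries the $H^{7/8}_\gamma$ regularity, and $B$ is the product of the remaining factors together with the denominator. The bound \eqref{im1infty**} supplies $A$ in $H^{7/8}_\gamma(0,\infty;L^\infty(\omega))$, respectively $H^{7/8}_\gamma(0,\infty;L^{8/3}(\omega))$, and \eqref{im1infty***} supplies it in $L^\infty_\gamma(0,\infty;H^{3/2}(\omega))$ or $L^\infty_\gamma(0,\infty;H^{1/2}(\omega))$. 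The factor $B$ is placed in a sufficiently regular space using \eqref{im1infty*} and \eqref{im1infty***}, with the admissibility condition $s_1+s_2>s+1/2$ of Proposition \ref{prod} easily verified (e.g.\ $s_1=s_2=7/8$ with $s=7/8$). The trace on $\partial\Omega$ poses no difficulty: on $\Gamma_0$ the terms vanish and on $\Gamma(\eta^S)$ parametrized by $\omega$ the trace norms reduce to norms on $\omega$, multiplied by smooth functions of $\eta^S$ encoding the parametrization.

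The main technical step is the fractional Sobolev bookkeeping in \eqref{ep12}: one must check that the denominator $(1+\xi+\eta^S)^{-\alpha}$, once expanded via a geometric series in $\xi/(1+\eta^S)$, can be distributed among the $L^\infty$ slots of Proposition \ref{prod} without losing the quadratic $R^2$ gain. This is where the inequality $R<1$ is used to sum the geometric series, and where the regularity of $\eta^S$ (whence of $\gamma^{(j)}(\eta^S)$ in $W^{3/2,\infty}(\partial\Omega)$) is crucial to absorb these $L^\infty$ multipliers into the target space.
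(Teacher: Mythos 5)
Your proposal is correct and follows essentially the same route as the paper: both rely on the monomial structure of the $\epsilon^{(i)}$ from Lemma \ref{fff1-}, the a priori bounds \eqref{im1infty*}--\eqref{im1infty***} and \eqref{denominfty*} to distribute factors among the norms for \eqref{ep11}, and Proposition \ref{prod} together with an interpolation bound on a weighted factor for \eqref{ep12}. The only cosmetic difference is that the paper estimates the denominator $(1+\xi+\eta^S)^{-\alpha}$ directly in a Sobolev norm rather than expanding it in a geometric series.
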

\begin{proof}
Using \eqref{re}, \eqref{im1infty*},\eqref{denominfty*} and \eqref{sof1}, we get
\begin{equation}
\label{tom1}
\left\| \epsilon^{(1)}(\xi,\partial_s\xi)\right\|_{L^\infty_\gamma(0,\infty;[L^\infty(\Omega)]^9)} \leq CR^2.
\end{equation}
From \eqref{denominfty*}, \eqref{re}, \eqref{im1infty*} and \eqref{sof2}, we find
\begin{equation}
\label{tom2}
\left\| \epsilon^{(2)}(\xi,\partial_s\xi,\partial_{ss}^2\xi)\right\|_{L^\infty_\gamma(0,\infty;L^2(\Omega))} \leq CR^2.
\end{equation}
From \eqref{denominfty*}, \eqref{re}, \eqref{im1infty*} and \eqref{sof3}, we obtain
\begin{equation}
\label{tom3}
\left\| \epsilon^{(3)}(\xi,\partial_s\xi,\partial^2_{ss}\xi,\partial^3_{sss}\xi)\right\|_{L^\infty_\gamma(0,\infty;L^2(\Omega))} \leq CR^2.
\end{equation}
From \eqref{denominfty*}, \eqref{re}, \eqref{im1infty*1}, \eqref{im1infty*} and \eqref{sof4}, we get
\begin{equation}
\label{tom4}
\left\|\epsilon^{(13)}(\xi,\partial_t\xi)\right\|_{L^4_\gamma(0,\infty;[L^\infty(\Omega)]^3)}\leq CR^2.
\end{equation}	
Using \eqref{denominfty*}, \eqref{re}, \eqref{im1infty*}, \eqref{im1infty*1} and \eqref{sof5}, we get
\begin{equation}
\label{tom5}
\left\| \epsilon^{(14)}(\xi,\partial_t\xi,\partial^2_{ts}\xi)\right\|_{L^6_\gamma(0,\infty;L^2(\Omega))}\leq C R^2.
\end{equation}
Using \eqref{tom1}, \eqref{tom2}, \eqref{tom3}, \eqref{tom4} and \eqref{tom5}, we deduce \eqref{ep11}.	
In the other hand, we have 
\begin{equation}
\left\|\xi\partial_s\xi \right\|_{L^\infty_{\gamma}(0,\infty;H^{3/2}(\omega))}\leq C\left\|\xi \right\|_{L^\infty_{\gamma}(0,\infty;H^2(\omega))}\left\|\partial_s\xi \right\|_{L^\infty_{\gamma}(0,\infty;H^{3/2}(\omega))}\leq CR^2.   
\end{equation}
Thus, using \eqref{denominfty*} and \eqref{im1infty*}, we have
\begin{equation}
\label{riri}
\left\|\frac{1}{(1+\xi+\eta^S)^{\alpha_i}} \right\|_{L^\infty_{\gamma}(0,\infty;H^{2}(\omega))}\leq C,\quad \alpha_i\geq 1.
\end{equation}
Then,
\begin{equation}
\left\|\frac{\xi^{m_1}\partial_s\xi}{(1+\xi+\eta^S)^{\alpha_i}} \right\|_{L^\infty_{\gamma}(0,\infty;H^{3/2}(\omega))}\leq CR^2.   
\end{equation}
Then, using the fact that $\partial_s\xi\in L_\gamma^\infty(0,\infty;H^2(\omega))$ and $\left\|\partial_s\xi\right\|_{L_\gamma^\infty(0,\infty;H^2(\omega))}\leq CR$, we get
\begin{equation}
\label{art1}
\left\|\xi^{m_1}(\partial_s\xi)^{m_2} \right\|_{L^\infty_{\gamma}(0,\infty;H^{3/2}(\omega))}\leq CR^2.   
\end{equation}
We have also
\begin{equation}
\left\|\xi\partial^2_{ss}\xi \right\|_{L^\infty_{\gamma}(0,\infty;H^{1/2}(\omega))}\\\leq C\left\|\xi\right\|_{L^\infty_{\gamma}(0,\infty;H^{3/2}(\omega))}\left\|\partial^2_{ss}\xi\right\|_{L^\infty_\gamma(0,\infty;H^{1/2}(\omega))}\leq CR^2 ,
\end{equation}
Then, using \eqref{riri}, we obtain
\begin{equation}
\label{art2}
\left\|\frac{\xi^{m_1}\partial^2_{ss}\xi}{(1+\xi+\eta^S)^{\alpha_i}} \right\|_{L^\infty_{\gamma}(0,\infty;H^{1/2}(\omega))}\leq CR^2 ,
\end{equation}
Furthermore, using \eqref{denominfty*}, we get
\begin{multline}
\label{riri1}
\left\|\frac{\xi}{(1+\xi+\eta^S)^{\alpha_i}} \right\|_{H^{5/4}_{\gamma}(0,\infty;L^\infty(\omega))}\leq C\left\|\frac{\xi}{(1+\xi+\eta^S)^{\alpha_i}} \right\|_{H^{2}_{\gamma}(0,\infty;L^2(\omega))}^{5/8}\left\|\frac{\xi}{(1+\xi+\eta^S)^{\alpha_i}} \right\|_{L^2_{\gamma}(0,\infty;H^4(\omega))}^{3/8}\\\leq CR,\quad \alpha_i\geq 1.
\end{multline}  
hence, we obtain
\begin{multline}
\left\|\frac{\xi\partial_s\xi}{(1+\xi+\eta^S)^{\alpha_i}} \right\|_{H^{7/8}_{\gamma}(0,\infty;L^\infty(\omega))}\leq C\left\|\frac{\xi}{(1+\xi+\eta^S)^{\alpha_i}} \right\|_{H^{5/4}_{\gamma}(0,\infty;L^\infty(\omega))}\left\|\partial_s\xi \right\|_{H^{7/8}_{\gamma}(0,\infty;L^\infty(\omega))} \\ \leq CR^2.   
\end{multline}
Since $7/8>1/2$, we use the \cref{prod}, we obtain
$$
\left\|(\partial_s \xi)^{m_2} \right\|_{H^{7/8}(0,\infty;L^\infty(\omega))}\leq C\left\|(\partial_s \xi) \right\|_{H^{7/8}(0,\infty;L^\infty(\omega))}^{m_2}.
$$ 
We deduce
\begin{equation}
\label{art3}
\left\|\frac{\xi^{m_1}(\partial_s\xi)^{m_2}}{(1+\xi+\eta^S)^{\alpha_i}}\right\|_{H^{7/8}_{\gamma}(0,\infty;L^\infty(\omega))}\leq CR^2.
\end{equation}
Moreover,
\begin{equation}
\left\|\frac{\xi\partial^2_{ss}\xi}{(1+\xi+\eta^S)^{\alpha_i}} \right\|_{H^{7/8}_{\gamma}(0,\infty;L^{8/3}(\omega))}\leq C\left\|\partial^2_{ss}\xi\right\|_{H^{7/8}_{\gamma}(0,\infty;L^{8/3}(\omega))}\left\|\xi\right\|_{H^{5/4}_{\gamma}(0,\infty;L^\infty(\omega))}\leq CR^2.
\end{equation}
From \eqref{riri1}, we have
\begin{equation}
\label{art4}
\left\|\frac{\xi^{m_1}\partial^2_{ss}\xi}{(1+\xi+\eta^S)^{\alpha_i}} \right\|_{H^{7/8}_{\gamma}(0,\infty;L^{8/3}(\omega))}\leq CR^2 ,
\end{equation}
Using \eqref{art1}, \eqref{art2}, \eqref{art3} and \eqref{art4}, we obtain \eqref{ep12}.
\end{proof}
\begin{Lemma}
\label{fff}
We have
\begin{equation}
\left\| \nabla\cdot\mathcal{N}_E(\xi)\right\|_{L^2_{\gamma}(0,\infty;[L^2(\Omega)]^3)}+\left\| \mathcal{N}_F(\xi)\right\|_{L^2_{\gamma}(0,\infty;[L^2(\Omega)]^3)} \leq C R^2,
\end{equation}
\begin{equation}
\label{estimE}
\left\|\mathcal{N}_E(\xi)\right\|_{L^2_{\gamma}(0,\infty;[L^2(\Omega)]^9)}\\\leq CR^2,
\end{equation}
\begin{equation}
\label{esti}
\left\|\mathcal{N}_E(\xi)n\right\|_{W_\gamma^{1/4}(0,\infty;[H^{1/2}(\partial\Omega)]^3;[L^2(\partial\Omega)]^3)}+\left\| \mathcal{N}_G(\xi)\right\|_{W_\gamma^{1/4}(0,\infty;[H^{1/2}(\partial\Omega)]^3;[L^2(\partial\Omega)]^3)}\leq CR^2,
\end{equation}
\begin{equation}
\label{halsey}
\left\|(\mathcal{F}(u,p,\xi),G(u,\xi+\eta^S),H(u,\xi+\eta^S)) \right\|_{\mathcal{Y}_\infty}\leq CR^2.
\end{equation}
\end{Lemma}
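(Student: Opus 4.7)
The strategy is to exploit the explicit structure of the nonlinear terms obtained in the decompositions preceding the statement. Recall that $\mathcal{N}_E(\xi)$, $\mathcal{N}_F(\xi)$ and $\mathcal{N}_G(\xi)$ are sums of monomials of the schematic form \eqref{ter*}, each factor being a power of $\xi$, a power of $\partial_s\xi$, a factor $\partial_{ss}^2\xi$ or $\partial_{ts}^2\xi$, divided by $(1+\xi+\eta^S)^{\alpha}$ and multiplied by a coefficient depending only on $(w^S,p^S,\eta^S)$, which is of regularity $W^{1,\infty}(\Omega)$ (or $W^{1/2,\infty}(\partial\Omega)$ in the boundary case). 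Moreover $\mathcal{N}_F(\xi)$ is defined in \eqref{nf} precisely as a sum of the $\epsilon^{(i)}$ studied in \cref{fff1}. The idea is therefore to read \eqref{ep11}--\eqref{ep12} and feed them into the product inequality of \cref{prod}, combined with the a~priori bounds \eqref{im1infty*}--\eqref{im1infty***} and the nondegeneracy \eqref{denominfty*}.

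For the volume estimates, for each monomial one picks one distinguished factor to put in $L^2_\gamma$ and places all the others in $L^\infty_\gamma$: for instance in $\xi^{m_1}(\partial_s\xi)^{m_2}/(1+\xi+\eta^S)^\alpha$ with $m_1+m_2\geq 2$, the denominator sits in $L^\infty_\gamma(0,\infty;L^\infty)$ by \eqref{denominfty*}, all but one of the $\xi$, $\partial_s\xi$ factors sit in $L^\infty_\gamma(0,\infty;L^\infty(\omega))$ by \eqref{im1infty*}, and the remaining factor (of the same type) is put in $L^2_\gamma$, which is finite thanks to membership in $\mathcal{X}_{\infty,\gamma}$; since we extract one factor of smallness at each step, we gain the quadratic factor $R^2$. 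The same scheme controls $\mathcal{N}_F(\xi)$ since it is built from the $\epsilon^{(i)}$ already bounded in \cref{fff1}. For $\nabla\cdot\mathcal{N}_E(\xi)$ one differentiates the monomials; the Leibniz rule produces only terms of the same type, possibly with one additional derivative hitting a factor of $\xi$, which is still controlled by \eqref{im1infty*}. The $L^2_\gamma(0,\infty;[L^2(\Omega)]^9)$ bound on $\mathcal{N}_E(\xi)$ itself follows verbatim.

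The boundary estimate is the most delicate point. Using the equivalence
$$
W^{1/4}_\gamma(0,\infty;[H^{1/2}(\partial\Omega)]^3,[L^2(\partial\Omega)]^3)=L^2_\gamma(0,\infty;[H^{1/2}(\partial\Omega)]^3)\cap H^{1/4}_\gamma(0,\infty;[L^2(\partial\Omega)]^3),
$$
the $L^2_\gamma(H^{1/2})$ piece is obtained by taking the trace of the $L^\infty_\gamma(H^{3/2}(\partial\Omega))$ bound \eqref{ep12} on a distinguished factor and tensoring it with the $L^2_\gamma$ norm of another factor (available from $\mathcal{X}_{\infty,\gamma}$) via \cref{prod} with $s_1+s_2>s+1/2$. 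For the $H^{1/4}_\gamma(L^2)$ piece we interpolate between $H^{7/8}_\gamma(0,\infty;L^{8/3})$ given in \eqref{ep12} and the $L^2_\gamma(0,\infty;L^2)$ bound coming from the volume argument, again applying \cref{prod}; the exponents $7/8$ and $8/3$ are precisely tuned so that this interpolation yields the desired $H^{1/4}(L^2)$ regularity with a quadratic factor. The analysis for $\mathcal{N}_G(\xi)$ is identical since it is built out of $\epsilon^{(11)}$ and $\epsilon^{(14)}$, which were shown in \cref{fff1} to satisfy the same bounds.

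Finally, for $\mathcal{F}(u,p,\xi)$, $G(u,\xi+\eta^S)$ and $H(u,\xi+\eta^S)$, the key observation is that every coefficient appearing in $\mathbf{K}_{\xi+\eta^S}-\Id$, $\Delta-\mathbf{L}_{\xi+\eta^S}$, $\nabla-\mathbf{G}_{\xi+\eta^S}$, $\mathbf{M}_{\xi+\eta^S}$, $F^2(u,\xi+\eta^S)$ is, by \cref{fff1-}, of size $O(\xi,\partial_s\xi,\partial_{ss}^2\xi,\partial_t\xi,\partial_{ts}^2\xi)$, so already first order in $\xi$. Plugging this into the explicit formula \eqref{fcall} and invoking \cref{prod}, each term becomes a product of a small coefficient (controlled by the $\mathcal{X}_{\infty,\gamma}$ norm of $\xi$, hence by $R$) and a factor involving $u$, $\partial_t u$, $\nabla u$ or $\nabla p$ (also controlled by $R$ via \eqref{es}); this yields the quadratic bound $CR^2$. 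The anticipated main obstacle is matching time-regularity exponents in the $W^{1/4}_\gamma$ boundary norm when the coefficients depending on $\xi$ are multiplied against traces of $\nabla u$; this is resolved exactly as above by using the refined time-fractional bounds \eqref{im1infty**} together with \cref{prod}.
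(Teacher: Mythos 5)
Your proposal is correct and follows essentially the same route as the paper: identify the explicit monomial structure of $\mathcal{N}_E$, $\mathcal{N}_F$, $\mathcal{N}_G$, absorb the denominators via \eqref{denominfty*}, distribute the factors over the a priori bounds \eqref{im1infty*}--\eqref{im1infty***} and the boundary estimates of \cref{fff1} (using \cref{prod} for the fractional-in-time products), and treat $\mathcal{F}$, $G$, $H$ as products of an $O(\xi)$ coefficient with a factor controlled by \eqref{es}. The only difference is one of presentation: the paper states the boundary bound \eqref{esti} as a direct consequence of \cref{fff1} and defers \eqref{halsey} to the procedure of \cite{MR3962841}, whereas you spell out the embedding/trace steps explicitly.
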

\begin{proof}
From \eqref{E} and \cref{fff1-}, $\mathcal{N}_E(\xi)$ admits the following terms
\begin{multline}
\label{georgina}
c^1(\eta^S,w^S)\frac{\xi^{m_1}(\partial_s\xi)^{m_2}}{(1+\xi+\eta^S)^{\alpha_1}},\ c^2(\eta^S,w^S)\frac{\xi^{n_1}(\partial_s\xi)^{n_2}\partial^2_{ss}\xi}{(1+\xi+\eta^S)^{\alpha_2}},\\ m_1+m_2\geq 2,\ n_1+n_2\geq 2,\alpha_i\geq 1, \ i=1...2, 
\end{multline}
where $c^i$, $i=1,2$ are $W^{1,\infty}(\Omega)$ functions.
	
From \eqref{denominfty*}, \eqref{re} and \eqref{im1infty*}, we obtain
\begin{equation}
\left\|\mathcal{N}_E(\xi)\right\|_{L^2_{\gamma}(0,\infty;[L^2(\Omega)]^3)} \leq C R^2.
\end{equation}
From \eqref{t6m}, \eqref{E}, \eqref{zak1}, \eqref{re} and  \cref{fff1-}, the terms that appear in $\nabla\cdot\mathcal{N}_E(\xi)$ are the following
\begin{multline*}
d^1(\eta^S,w^S)\frac{\xi^{m_1}(\partial_s\xi)^{m_2}}{(1+\xi+\eta^S)^{\alpha_1}},\ d^2(\eta^S,w^S)\frac{\xi^{n_1}(\partial_s\xi)^{n_2}\partial^2_{ss}\xi}{(1+\xi+\eta^S)^{\alpha_2}},\ d^3(\eta^S,w^S)\frac{\xi^{m_3}\partial^3_{sss}\xi}{(1+\xi+\eta^S)^{\alpha_3}},\\ m_1+m_2\geq 2,\ n_1+n_2\geq 2, \ m_3\geq 1, \ \alpha_i\geq 1, \ i=1...3,
\end{multline*}
where $d^i$, $i=1,2,3$ are bounded functions in $\Omega$.
	
From \eqref{denominfty*}, \eqref{re} and \eqref{im1infty*}, we obtain
\begin{equation}
\left\| \nabla\cdot\mathcal{N}_E(\xi)\right\|_{L^2_{\gamma}(0,\infty;[L^2(\Omega)]^3)} \leq C R^2.
\end{equation}
Moreover, from \eqref{nf}, $\mathcal{N}_F(\xi)$ has the terms
\begin{multline*}
a^1(\eta^S,w^S)\frac{\xi^{m_1}(\partial_s\xi)^{m_2}}{(1+\xi+\eta^S)^{\alpha_1}},\ a^2(\eta^S,w^S)\frac{\xi^{n_1}(\partial_s\xi)^{n_2}\partial^2_{ss}\xi}{(1+\xi+\eta^S)^{\alpha_2}}, \ a^3(\eta^S,w^S)\frac{\xi^{m_3} \partial^2_{ts}\xi}{(1+\xi+\eta^S)^{\alpha_3}},\\ a^4(\eta^S,w^S)\frac{\xi^{m_4}\partial_t\xi }{(1+\xi+\eta^S)^{\alpha_4}}, a^5(\eta^S,w^S)\frac{\xi^{m_5}\partial_t\xi\partial_s\xi}{(1+\xi+\eta^S)^{\alpha_5}},\\ m_1+m_2\geq 2,\ n_1+n_2\geq 2, \ m_3\geq 1, \ m_3\geq1, \  m_4\geq1, \  m_5\geq0,\ \alpha_i\geq 1, \ i=1...5,
\end{multline*}
where $a^i$, $i=1,...,5$ are bounded functions in $\Omega$.
	
Thus, using \eqref{denominfty*}, \eqref{re}, \eqref{im1infty*} and \eqref{im1infty*1}, we obtain
\begin{equation*}
\left\| \mathcal{N}_F(\xi)\right\|_{L^2_{\gamma}(0,\infty;[L^2(\Omega)]^9)} \leq C R^2.
\end{equation*}
The functions $\mathcal{N}_{E}(\xi)$ and $\mathcal{N}_G(\xi)$ admit terms of the form \eqref{georgina}. Then, from \cref{fff1} and \eqref{re}, we have the estimate \eqref{esti}.
	
Finally, using \cref{fff1}, \eqref{es} and the same procedure described in \cite{MR3962841}, we get \eqref{halsey}.
\end{proof}
\subsection{Proof of \cref{thmain2}}
We define the application $\Phi:\mathcal{B}_{\infty,R} \longrightarrow \mathcal{Y}_\infty$ by
\begin{multline}
\label{fi}
\Phi(\widetilde{f},\widetilde{g},\widetilde{h})= (\nabla\cdot\mathcal{N}_E(\xi)+\mathcal{N}_F(\xi)+\mathcal{F}(u,p,\xi),\\-[\mathcal{N}_E(\xi)n]_{\tau}+[\mathcal{N}_G(\xi)]_{\tau}+G(u,\xi+\eta^S),H(u,\xi+\eta^S)-\mathcal{T}^*(\mathcal{N}_E(\xi)n)).
\end{multline}
In this case, we show that for $R$ small enough
$\Phi (\mathcal{B}_{\infty,R})\subset \mathcal{B}_{\infty,R}$ and
$\Phi_{|\mathcal{B}_{\infty,R}}$ is a strict contraction.  
Let $(\widetilde{f},\widetilde{g},\widetilde{h})\in\mathcal{B}_{\infty,R}$, from \cref{fff}, we obtain
\begin{equation*}
\left\|\Phi(\widetilde{f},\widetilde{g},\widetilde{h})\right\|_{\mathcal{Y}_\infty}\leq CR^2.
\end{equation*}
We get similarly
\begin{equation*}
\left\|\Phi(\widetilde{f}^{(1)},\widetilde{g}^{(1)},\widetilde{h}^{(1)})-\Phi(\widetilde{f}^{(2)},\widetilde{g}^{(2)},\widetilde{h}^{(2)}) \right\|_{\mathcal{Y}_\infty}
\leq CR\left\|(\widetilde{f}^{(1)},\widetilde{g}^{(1)},\widetilde{h}^{(1)})-(f^{(2)},\widetilde{g}^{(2)},\widetilde{h}^{(2)}) \right\|_{\mathcal{Y}_\infty}.
\end{equation*}
for $(\widetilde{f},\widetilde{g},\widetilde{h}), (\widetilde{f}^{(i)},\widetilde{g}^{(i)},\widetilde{h}^{(i)})\in \mathcal{B}_{\infty,R}$.

Thus, we deduce the principal result of this paper.


\bibliographystyle{plain}
\bibliography{biblio}
	
\end{document}